\newcommand{\A}{\alpha}
\newtheorem{theorem}{Theorem}[section]
\newtheorem{lemma}[theorem]{Lemma}
\theoremstyle{definition}
\newtheorem{definition}[theorem]{Definition}
\newtheorem*{restatemain}{Theorem \ref{theorem:MainTheorem}}
\newtheorem*{restatebounds}{Theorem \ref{theorem:ComplexityBounds}}
\theoremstyle{cor}
\theoremstyle{remark}
\newtheorem{remark}[theorem]{Remark}
\theoremstyle{remark}
\newtheorem{example}[theorem]{Example}
\theoremstyle{corollary}
\newtheorem{corollary}[theorem]{Corollary}
\numberwithin{equation}{section}
\begin{document}

\title{$\alpha$-sloped generalized Heegaard splitting of 3-manifolds}

\author{Marion Moore Campisi}
\address{Marion Moore Campisi, Department of Mathematics, University of Texas, Austin, 78712}
\email{campisi@math.utexas.edu}

\keywords{}

\begin{abstract}
We generalize the definition of thin position of Scharlemann and Thompson for compact orientable 3-manifolds with torus boundary components and introduce $\alpha$-sloped generalized Heegaard splittings.   We examine its relationship to generalized Heegaard splittings of manifolds resulting from Dehn filling. We compare $\alpha$-sloped thin position of 3-manifolds to other types of thin position for knots and 3-manifolds and discuss how this kind of decomposition gives an organic picture of M and allows the structure of the manifold to dictate the most natural slope(s) on the boundary.   Additionally, we provide illustrative examples and questions motivating the study of alpha-sloped thin position. 
\end{abstract}

 \maketitle
 
\begin{center}
\end{center}

        \section{Introduction}
        \label{sec:Introduction}

The notion of thin position for a closed orientable 3-manifold was developed by Scharlemann and Thompson in \cite{SchaThoTPf3M}.  Their construction can also be applied to compact orientable 3-manifolds by regarding a compact 3-manifold as a cobordism from one (possibly empty) collection of boundary components to the collection of remaining components.  The construction is to build a manifold by starting with a set of 0-handles and (possibly) a closed surface cross an interval and then attaching alternating collections of 1- and 2-handles and finally adding 3-handles.  The 1- and 2-handles are added so as to minimize the complexity of the intermediate surfaces.  This decomposition along closed surfaces is called a generalized Heegaard splitting and a generalized Heegaard splitting of lowest complexity is called thin position.  We generalize this construction to decompositions of compact 3-manifolds with a collection of torus boundary components called $\alpha$-sloped generalized Heegaard splittings.

In an $\alpha$-sloped generalized Heegaard splitting a manifold $M$ is decomposed into simple pieces along surfaces which have boundary with slope $\alpha \epsilon \{\mathbb{Q} \cup \infty \cup \emptyset\}$ on a distinguished boundary component of $M$.  Assigning a complexity to these decompositions, \emph{$\alpha$-sloped thin position} is a decomposition which has lowest complexity over all $\alpha$-sloped decompositions, and  \emph{thin position} of $M$ is a decomposition of $M$ with the lowest complexity over all slopes.  This kind of decomposition gives an organic picture of $M$ and allows the structure of the manifold to dictate the most natural slope(s) on the boundary.   In section \ref{section:PropertiesofThinDecompositions} we show that $\alpha$-sloped generalized Heegaard splittings have many of the same desirable properties as classical generalized Heegaard splittings.

 
 It is always possible to decompose a manifold with torus boundary using $\alpha$-sloped surfaces.  Sometimes the only possible way to do this efficiently is to modify a closed decomposition via a process called $\alpha$-stabilization.  Slopes for which this is the only possibility are called \emph{dishonest} and all others are called \emph{honest}.  Boundary slopes for separating essential surfaces and slopes that lead to a drop in Heegaard genus under Dehn surgery are honest slopes, and this is the class of slopes for which this type of analysis is most interesting.  


Using $\alpha$-sloped generalized Heegaard splittings we are able to  compare decompositions of knot exteriors to decompositions of the manifolds which result from Dehn filling.  In section \ref{sec:InducedHSplittings} we establish that for knots in $S^3$ unless a thin $\alpha$-sloped decomposition is comprised of a single surface (i.e. is a $\alpha$-sloped Heegaard splitting) then either $S^3-K$ or the manifold resulting from $\alpha$-sloped Dehn surgery on $K$ contains an closed essential surface.  Specifically we prove:

\begin{theorem}
\label{theorem:MainTheorem}

Let $K \subset S^3$ be a non-trivial knot and let $\alpha \epsilon \{\mathbb{Q} \}$.  Let $M=\overline{S^3-K}$ be in $\alpha$-sloped thin position, and let $K(\alpha)$ be the result of $\alpha$-sloped Dehn surgery on $K$.  Then either 
\begin{enumerate}

\item{$\A$-sloped thin position for $M$ is an $\A$-sloped Heegaard surface }

\item{There exists a closed essential surface in the exterior of $K$}
\label{item:closed}

\item{$K(\alpha)$ is Haken}
\label{item:Haken}

\item{$K(\alpha)$ is a connected sum of two lens spaces}.
\label{item:lensspace}

\end{enumerate}
\end{theorem}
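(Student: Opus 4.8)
The plan is to run the Scharlemann--Thompson thinning argument once more to extract an essential thin surface, push it forward into $K(\A)$, and then split into cases according to whether the resulting closed surface stays incompressible. Suppose conclusion (1) fails, so the $\A$-sloped thin decomposition of $M$ contains at least one thin surface $F$. By the properties of $\A$-sloped thin decompositions established in Section~\ref{section:PropertiesofThinDecompositions} (the $\A$-sloped analogues of the classical facts about thin levels), $F$ is incompressible, $\partial$-incompressible, and not $\partial$-parallel in $M$. If some thin surface is closed, it is a closed essential surface in $M=\overline{S^3-K}$ and we are in case (2); so we may assume every thin surface has nonempty boundary, necessarily of slope $\A$. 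Since $K$ is nontrivial, $M$ is irreducible and $\partial M$ is an incompressible torus; both facts are used throughout. Writing $K(\A)=M\cup V_\A$ with $V_\A$ the surgery solid torus, so that $\A$ is the meridian slope of $V_\A$, capping $\partial F$ with meridian disks of $V_\A$ produces a closed surface $\widehat F\subset K(\A)$.

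The argument now divides according to $\widehat F$. If $\widehat F$ is incompressible in $K(\A)$ and is not a $2$-sphere, then $K(\A)$ contains a closed incompressible surface; if $K(\A)$ is irreducible this is conclusion (3), and if $K(\A)$ is reducible we pass to the next case. If $\widehat F$ is a $2$-sphere, or more generally if $K(\A)$ is reducible, then $K(\A)$ carries an essential sphere, which after minimizing its intersection with $V_\A$ exhibits an essential planar surface in $M$ with boundary slope $\A$ and at least two boundary circles (a disk is excluded since $M$ is $\partial$-irreducible). Here one invokes the known restrictions on reducible Dehn surgery on a knot in $S^3$ --- in particular that a reducing sphere must meet the surgery torus: either $M$ contains a closed essential surface, which is case (2) and absorbs all satellite companions; or $K(\A)$ has a summand containing a closed incompressible surface, i.e. $K(\A)$ is Haken, conclusion (3); or $K$ is a torus knot with $\A$ its cabling slope, in which case the standard computation of surgery on torus knots gives that $K(\A)$ is a connected sum of two lens spaces, conclusion (4).

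The remaining case is that $\widehat F$ is compressible in $K(\A)$. Choose a compressing disk $D$ with $\abs{D\cap V_\A}$ minimal. Because $F$ is incompressible in $M$, $D$ cannot be isotoped off $V_\A$, so $D\cap V_\A$ is a nonempty union of meridian disks and $P:=D\cap M$ is a planar surface whose single boundary circle $\partial D$ lies on $F$ --- essential and not $\partial$-parallel there, since otherwise $\partial D$ would bound a disk in $\widehat F$ --- and whose remaining boundary circles lie on $\partial M$ with slope $\A$, hence are parallel to $\partial F$. One then maximally compresses $\widehat F$ in $K(\A)$, using $P$ and the innermost-disk/outermost-arc bookkeeping to push every compression through $V_\A$, and reads the result back into $M$: either one obtains a closed incompressible surface --- which, as in the incompressible case above, yields conclusion (2) or (3) --- or the incompressibility of $F$ together with its $\partial$-incompressibility is contradicted, leaving only the possibility that $P$, and hence the original compression, came from a cabling annulus, which returns us to the reducible/torus-knot case and conclusion (4).

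The main obstacle is this last case: controlling the interaction of the compressing disk $D$ with the surgery solid torus and with $F$, i.e. carrying out the Scharlemann--Thompson-type innermost/outermost argument in the filled manifold while tracking the slope of every newly created boundary curve, so that one genuinely either builds a closed essential surface in $M$ or forces the cabled structure --- this is exactly where the $\partial$-incompressibility of thin surfaces does its work. A secondary point that must be handled with care is the reducible branch, where one leans on the (standard) fact that a reducing sphere for a surgery on a knot in $S^3$ meets the surgery torus, together with case (2) absorbing the satellite knots, in order to pin down conclusion (4).
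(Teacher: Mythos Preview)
The paper's proof is much shorter than your outline: once conclusion~(1) fails, Theorem~\ref{theorem:ThinInc} supplies an essential thin surface $F$; if $F$ is closed we are in case~(2), and if $F$ has $\A$-boundary the paper simply invokes (the proof of) Theorem~2.0.2 of Culler--Gordon--Luecke--Shalen \cite{CulGorLueShaDSoK}, which says that an essential $\A$-sloped surface in $M$ forces one of (2), (3), (4), or that $M$ fibers over $S^1$ with planar fiber of boundary slope $\A$. The last option is then excluded because such a fiber in a knot exterior would be a disk, making $K$ the unknot.

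Your route is genuinely different: rather than citing CGLS, you cap $F$ off to $\widehat F\subset K(\A)$ and case-split on whether $\widehat F$ is incompressible, a sphere, or compressible. But the hard content of the CGLS theorem sits precisely in your Cases~B and~C, and your sketch does not close those gaps. In Case~B you appeal to unnamed ``known restrictions on reducible Dehn surgery'' and assert a trichotomy --- closed essential surface in $M$, Haken summand, or torus-knot cabling --- that is not a theorem independent of CGLS; read literally it comes close to assuming the open cabling conjecture. In Case~C you correctly flag the compressible case as ``the main obstacle'' but do not resolve it: the claim that maximally compressing $\widehat F$ while tracking intersections with $V_\A$ yields either a closed essential surface or forces a cabling annulus is exactly what CGLS proves, and it is not delivered by the innermost/outermost bookkeeping you gesture at. Your final alternative there --- that the essentiality of $F$ ``is contradicted'' --- cannot occur, since $F$ is essential by Theorem~\ref{theorem:ThinInc}. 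You also never confront the planar-fibration alternative that the paper explicitly rules out. What you have written is a roadmap toward reproving the relevant portion of CGLS rather than a proof of the theorem; the paper sidesteps all of this by quoting the result directly.
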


In section \ref{sec:DefinitionsAndConstruction} we describe a method for modifying a closed surface to create a surface with slope $\A$, and for modifying an $\alpha$-sloped surface to create a closed surface.  Extending this to a whole decomposition, we see that for a given manifold $M$ the width of any slope, $w(M, \alpha)$, is bounded by a function of the closed width, $w(M, \emptyset)$, which is similar to the width of the thin decomposition of Scharlemann and Thompson.  In fact, the closed width is close to an upper bound for any $\alpha$-width.  Moreover, the higher the closed width, the wider the range of possible $\alpha$-widths over all $\alpha$.    In section \ref{sec:ComplexityBounds} we show:

\begin{theorem} 
\label{theorem:ComplexityBounds}

 For any $\alpha \epsilon \{\mathbb{Q} \cup \infty\}$,   $\lceil \frac{2}{3} \times w(M, \emptyset)\rceil \leq  w(M, \alpha) \leq w(M, \emptyset) +_1 2 $.
\end{theorem}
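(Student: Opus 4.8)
The plan is to prove the two inequalities separately, each by converting a thin decomposition realizing one of the two widths into a (not necessarily thin) decomposition of the other type whose complexity can be controlled, using the two surface–modification constructions of Section~\ref{sec:DefinitionsAndConstruction}. For the upper bound $w(M,\alpha)\le w(M,\emptyset)+_1 2$, I would begin with a closed generalized Heegaard splitting realizing $w(M,\emptyset)$ and $\alpha$-stabilize it: push the distinguished torus boundary into a collar and tube the thick (or thin) surface bordering the component of $M$ cut along the splitting that meets the distinguished torus to that torus along an annulus of slope $\alpha$. One then checks that the resulting pieces are still simple relative to the new $\alpha$-sloped surfaces, so that this is a genuine $\alpha$-sloped generalized Heegaard splitting. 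The two points to verify are (i) that only the surface(s) adjacent to the distinguished component must be altered, so that the modification is localized and the complexity changes in a single entry of the width, which is what the subscript in $+_1 2$ records, and (ii) that one such tubing raises the complexity of the affected surface by at most $2$. Taking the minimum over all $\alpha$-sloped splittings gives the inequality.

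For the lower bound $\lceil\frac{2}{3}\times w(M,\emptyset)\rceil\le w(M,\alpha)$, I would start with an $\alpha$-sloped generalized Heegaard splitting realizing $w(M,\alpha)$ and run the reverse construction of Section~\ref{sec:DefinitionsAndConstruction}, capping each $\alpha$-sloped surface along subsurfaces of the distinguished torus to obtain closed surfaces and then amalgamating and reducing as needed to reach an honest closed generalized Heegaard splitting of $M$. Here the boundary curves of the sloped surfaces are parallel essential curves on the torus, and since the surfaces are (essentially) separating they cut the torus into an even number of annuli whose two sides alternate; hence each sloped surface can be capped by the annuli lying on a single side. Such capping preserves Euler characteristic, but repairing the simple-piece structure afterward may force the introduction of extra handles, and the crux of the argument is a worst-case estimate showing that the total complexity of the closed splitting produced exceeds $w(M,\alpha)$ by a factor of at most $\frac{3}{2}$ — equivalently, that each closed surface obtained has complexity at most $\tfrac{3}{2}$ times that of the sloped surface it came from. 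Combined with integrality of the width this yields $w(M,\emptyset)\le\frac{3}{2}\,w(M,\alpha)$, which is the stated bound.

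The main obstacle is precisely this last estimate. Unlike the upper bound, where a single controlled modification suffices, closing up a sloped splitting interacts globally with the decomposition: the annuli used to cap distinct sloped surfaces compete on the one torus, and capping can merge adjacent pieces, so one must argue that the cheapest way to restore the splitting never costs more than half of the original complexity. I expect the extremal configuration to be the one in which the distinguished component is bounded by a single sloped surface of large genus with many boundary curves, and confirming the constant $\frac{2}{3}$ should come down to analyzing that case together with the bookkeeping for how amalgamation combines the complexities of adjacent thick surfaces.
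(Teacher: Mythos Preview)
Your overall strategy matches the paper's: $\alpha$-stabilize a closed thin decomposition for the upper bound, and tube a thin $\alpha$-sloped decomposition along $T$ for the lower bound. The upper-bound argument is exactly what the paper does --- a single $\alpha$-stabilization of the thick surface $S_1$ in the block adjacent to $T$ raises that one complexity by $2$.

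Where you diverge is in the lower bound, and the divergence is that you have manufactured an obstacle that is not there. The ``main obstacle'' you describe --- competing annuli, merged pieces, the need to amalgamate and restore the splitting --- does not arise. The paper simply tubes the surfaces sequentially, starting with the outermost $S_n$, then $F_{n-1}$, then $S_{n-1}$, and so on. At each step the annuli used are precisely the components of $\partial_0$ of the compression body immediately above the surface being tubed, so there is no competition; and because tubing a bead across its own $\partial_0$-annulus converts it into an ordinary $1$-handle, each boundary compression body becomes an ordinary compression body in place. The output is already a closed generalized Heegaard splitting, with no amalgamation or restructuring needed.

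Once you see this, the $\tfrac{3}{2}$ factor is a one-line local computation on each thick surface rather than a global extremal argument: a connected surface of genus $g$ with $p$ boundary components has $c = 3g + p - 1$; tubing closes it to a surface of genus $g + p/2$ with $c = 3g + \tfrac{3}{2}p - 1$, and comparing these two expressions gives $c(\check S_i) \le \tfrac{3}{2}\,c(S_i)$. Your intuition that many boundary curves is the worst case is correct, but the analysis is immediate, not delicate.
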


$\lceil \frac{2}{3} \times w(M, \emptyset)\rceil$ and $w(M, \emptyset) +_1 2 $ are functions of $w(M, \emptyset)$ described in section \ref{sec:ComplexityBounds}.

In section \ref{sec:TorusKnots} we classify torus knot decompositions and show that this bound on the complexity range is sharp and that the width of Torus knots is realized by both the lens space surgery slopes and the meridional slope.

    \section{Definitions and Construction}{}{}
    \label{sec:DefinitionsAndConstruction}
    
    Throughout let $M$ be a compact orientable manifold whose boundary is a non-empty collection of torus boundary components, $\{T_i\}$. 
    
    Surfaces with boundary in such manifolds can be modified using the boundary of $M$ in a natural way to lower the number of boundary components of the surface.

Choose a boundary torus $T$ and consider the complement of $\Sigma$ in $T$,  $T \setminus \Sigma$ for some properly embedded surface with boundary $\Sigma \subset M$ with $\partial \Sigma \cap T \neq \emptyset$.  $T \setminus  \Sigma$ is a collection of annuli $\{A_i\}$.  

We define \emph{a tube of $\Sigma$ along $T$} to be a surface $\Sigma '= \Sigma - [A_j\times I] \cup[ A_j \times \{1\}] $.  



    \subsection{Boundary Compression Bodies}
   \label{subsec: BoundaryCompressionBodies}

A \emph{compression body} is a connected 3-manifold obtained from a closed (possibly disconnected) surface, $\partial_- C$, by adding a collection of 1-handles, $\mathcal{H}^1$, to $\partial_- C \times \{1\}$ in $\partial_- C \times I$. Let  $\partial_+ C$ denote $ \partial C - \partial_- C$.  Dually, a compression body is obtained from a connected surface $\partial_+ C$ by attaching 2-handles to $\partial_+C \times \{0\}$ in $\partial_+ C \times I$ and adding 3-balls onto newly-created 2-spheres.



A natural generalization of this construction results in a boundary compression body.  Define a \emph{bead} to be a solid torus $b=A \times I$, for an annulus $A$.  Let $\partial_0b$ denote $A\times\{0\}$ and $\partial_+b$ denote $\partial b - \partial_0 b$.  A \emph{spanning arc} for $b$ is an arc in $\partial_0b$ connecting its two boundary components.  The \emph{core} $c$ of $b$ is an essential curve in $\partial_0b$ and the \emph{co-core} is $[s\times I] \subset [A \times I]$ for a spanning arc $s$.  A \emph{0-bead} is a bead with empty attaching region, and a \emph{2-bead} is a bead with attaching region $\partial_+ b$.

 A \emph{boundary compression body} $C$ is a connected 3-manifold obtained from the disjoint union of a (possibly empty) collection of 0-beads and 0-handles, and $\partial_- C \times I$ for a (possibly disconnected) surface (possibly with boundary), $\partial_- C$, by attaching a collection of 1-handles to $[\partial_- C \times \{1\}] \cup \partial_+ b^0$.    Denote the collection of all 0-handles $\mathcal{H}^0$, the collection of 1-handles $\mathcal{H}^1$ and the collection of 0-beads $b^0$.
 
The boundary of the boundary compression body is divided into three section; $\partial_-C$, $\partial_0 C =[\partial (\partial_-C) \times I] \cup \partial_0b^0$ and $\partial_+ C= \partial C - (\partial_- C \cup \partial_0 C)$, see Figure  \ref{figure:boundarycompbody}.  Note that $\partial_0 C$ is a collection of annuli.  A compression body is a boundary compression body with $\partial(\partial_-C)= \emptyset$ and $b^0=\emptyset$. 


Dually a boundary compression body $C$ is obtained from a connected surface (possibly with boundary), $\partial_+C$, by adding a collection of 2-handles, $\mathcal{H}^2$, to $\partial_+ C \times \{0\}$ in $\partial_+ C \times I$ and attaching a collection of 2-beads, $b^2$, to annular components of the surface $\partial [(\partial_+ C \times I \cup \mathcal{H}^2)-(\partial_+C \cup \partial (\partial_+C\times I))]$ and capping off spherical boundary components with 3-handles.  As above the boundary is divided into $\partial_+C$, $\partial_0 C =[\partial (\partial_+C) \times I] \cup \partial_0b^2$ and $\partial_- C= \partial C - (\partial_+ C \cup \partial_0 C)$.  

\begin{figure}[htbp]
\begin{center}
\includegraphics[height=2in]{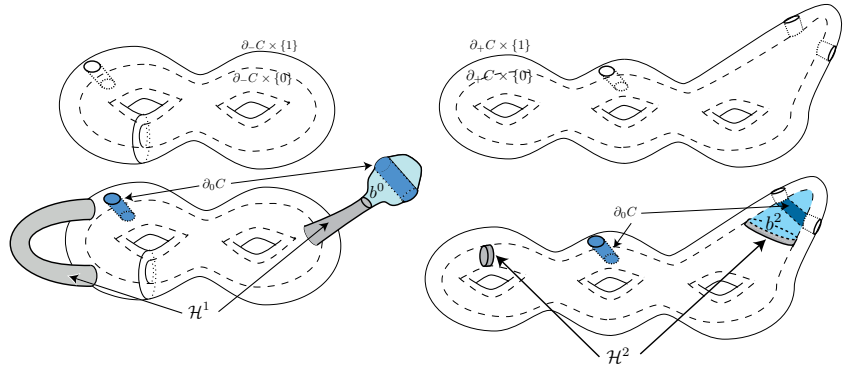}
\caption{The construction of a boundary compression body.}
\label{figure:boundarycompbody}
\end{center}
\end{figure}


A \emph{cut system} for a boundary compression body $C$  is a collection of disjoint, non-parallel compressing disks $\mathcal{D}$ for $C$, such that $C | \mathcal{D}$ is $( \partial_- C \times I)\cup b^0$.  If $\partial_- C=\emptyset$ then $C | \mathcal{D}$ is a collection of beads $b^0$, unless $b^0=\emptyset$, in which case it is a ball.  

Let $C$ be a boundary compression body.  A surface $F\subset C$ is \emph{$\partial_0$-compressible} in $C$ if there exists a disk $D\subset C$ with $\partial D = \gamma \cup \beta$ where $\gamma$ is an essential arc in $F$ and $\beta$ is an essential arc in $\partial_0 C$.


A 3-manifold $M$ is \emph{irreducible} if every embedded $S^2$ bounds a 3-ball.  Topologically, boundary compression bodies are compression bodies, so they are irreducible.  

If $M=C_1 \cup_S C_2$ for a pair of boundary compression bodies $C_1$ and $C_2$ we call $S$ a \emph{boundary Heegaard surface} for $M$ and the decomposition $M=C_1 \cup_S C_2$ a \emph{boundary Heegaard splitting of $M$}.

\begin{lemma}In a boundary compression body $C$, $\partial_-C$ is incompressible and $\partial_0$-incompressible. 
\label{lemma:negboundaryincomp} 
\begin{proof} 
Let $\mathcal{H}^1$ and  $b^0$ be as above.  
Let $C=[\partial_-C \times I] \cup \mathcal{H}^1 \cup b^0$,  and let $\mathcal{D}$ be a  cut system for $C$. Assume there is a compressing disk or $\partial_0$-compressing disk  $D\subset C$  for $\partial_-C$.  If $D$ is a $\partial_0$-compressing disk then $\partial D = \gamma \cup \beta$ where $\gamma$ is an essential arc in $\partial_- C$ and $\beta$ is an essential arc in $\partial_0 C$.    Since $\partial_0 C$ is the collection of annuli $[\partial (\partial_-C) \times I] \cup \partial_0b^0$, $\beta$ must be a spanning arc for some component $A$ of $\partial_0 C$.  Since $\partial_0b^0 \cap \partial_- C = \emptyset$ and $\partial \beta = x \cup y = \partial \gamma \subset \partial_-C$, $A$ cannot be an element of $\partial_0b^0$.  Thus $A=c \times I\subset \partial(\partial_-C) \times I$ for some boundary component $c$ of $\partial(\partial_-C)$.  It follows that either $x$ or $y\subset c \times \{1\}$, a contradiction.  Thus $D$ is not a $\partial_0$-compressing disk, but a compressing disk.   

Consider $\mathcal{D} \cap D$. Using an innermost disk argument, since $C$ is irreducible it is possible to remove circles of intersection. Since $\partial D \subset \partial_-C$ and $\partial \mathcal{D} \subset \partial_+ C$, there are no arcs of intersection and thus $D$ can be isotoped to be disjoint from $\mathcal{D}$.  Thus either $D \subset \partial_- C \times I$ or $D\subset b^0$.  Since $\partial D \subset \partial_- C$ the latter is impossible so $D$ is a compressing disk for $\partial_- C$ in $\partial_- C \times I$, a contradiction.  
\end{proof}

\end{lemma}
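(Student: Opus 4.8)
The natural route is to argue by contradiction using the ``additive'' description $C = [\partial_-C \times I] \cup \mathcal{H}^1 \cup b^0$ together with a cut system $\mathcal{D}$, so that cutting $C$ along $\mathcal{D}$ returns $(\partial_-C \times I) \cup b^0$. The plan is to suppose that there is a disk $D \subset C$ that either compresses $\partial_-C$ or $\partial_0$-compresses it, and then to push $D$ into a single piece of $C \mid \mathcal{D}$, where manifestly no such disk can exist.

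First I would dispose of the $\partial_0$-compressing case directly, before involving $\mathcal{D}$ at all. If $\partial D = \gamma \cup \beta$ with $\gamma$ an essential arc in $\partial_-C$ and $\beta$ an essential arc in $\partial_0 C$, then since $\partial_0 C = [\partial(\partial_-C) \times I] \cup \partial_0 b^0$ is a disjoint union of annuli and $\beta$ is essential, $\beta$ must be a spanning arc of one such annulus $A$. The endpoints of $\beta$ are the endpoints of $\gamma$, hence lie on $\partial_-C$; since the $0$-bead annuli $\partial_0 b^0$ are disjoint from $\partial_-C$, we are forced to have $A = c \times I$ for a boundary circle $c$ of $\partial_-C$. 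But a spanning arc of $c \times I$ has one endpoint on $c \times \{1\}$, which is not contained in $\partial_-C$ --- a contradiction. So $D$ may be assumed to be a genuine compressing disk.

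For the compressing-disk case I would run the standard innermost-disk and arc-counting cleanup of $D \cap \mathcal{D}$. Circles of intersection are removed using irreducibility of $C$ (boundary compression bodies are irreducible, as recorded just above the lemma); there are no arcs of intersection because $\partial D \subset \partial_-C$ while $\partial \mathcal{D} \subset \partial_+C$, and these parts of $\partial C$ are disjoint. Hence $D$ can be isotoped off $\mathcal{D}$, so it lies inside a single component of $C \mid \mathcal{D}$ --- either $\partial_-C \times I$ or one of the $0$-beads $b^0$. The bead case is impossible since $\partial D \subset \partial_-C$, and $\partial_-C \times \{0\} \cong \partial_-C$ admits no compressing disk in $\partial_-C \times I$. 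This contradiction proves the lemma.

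The one place that needs care --- the main obstacle --- is the $\partial_0$-compression analysis: one must be certain that an essential arc in a disjoint union of annuli is isotopic to a spanning arc of one of them, and then keep careful track of which annulus components can receive \emph{both} endpoints of $\gamma$, ruling out the $0$-bead annuli because they are disjoint from $\partial_-C$. Once that bookkeeping is in place the remainder is a routine innermost-disk argument, and no separate treatment of the dual $2$-handle picture is needed since the statement concerns only $\partial_-C$.
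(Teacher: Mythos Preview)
Your proposal is correct and follows essentially the same argument as the paper's own proof: first ruling out a $\partial_0$-compressing disk by tracking the endpoints of the spanning arc in $\partial_0 C$, and then using an innermost-disk argument against a cut system $\mathcal{D}$ to push any putative compressing disk into $\partial_-C \times I$ or a $0$-bead. The structure, the key observations, and even the order of the steps match the paper.
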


\begin{lemma}
\label{lemma:boundaryplusboundarycomp}

In a boundary compression body $C$, any $\partial_0$-compressing disk $D$ for $\partial_+ C$ is isotopic to the co-core of some bead $b$.


\begin{proof}
Let $D$ be a $\partial_0$-compressing disk for $\partial_+C$.  The boundary of $D$ is made up of two arcs, $\gamma \subset \partial_+ C$ and $\beta \subset \partial_0 C$. Since $\partial \gamma = \beta \subset \partial_+C$,  $\beta$ must be contained in $\partial_0 b$ and is the spanning arc for some bead $b$.  Consider $D \times  [-1, 1]$. Let $D_1$ be $D \times \{-1\}$ and  let $D_2$ be $D \times \{1\}$ with $D\times \{0\}=D$.  The pair of disks $D_1 \cup D_2$ cuts $\partial_0 b$ into two rectangles, one of which contains $\beta$.  Call the other rectangle $R$. Choose $\epsilon$ small enough so that $R \times [0, \epsilon] \cap D_i$ is a single rectangle for $i=1,2$.  Let $D_i'$ be $D_i \setminus (R \times [0, \epsilon])$ for $i=1,2$. The disk $D'=D_1' \cup D_2' \cup (R \times \{ \epsilon \})$ is a compressing disk for $C$.  $C|D'$ has two components, one of which contains $\partial_0b$.   Call this component $C'$.  $C'=D\times  [-1, 1] \bigcup R \times [0, \epsilon]$ and is a bead with co-core $D$.  
\end{proof}

\end{lemma}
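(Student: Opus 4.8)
The plan is to isotope the $\partial_0$-compressing disk $D$ into one of the beads used to build $C$, where it becomes a meridian disk of that solid torus, and then to recognize such a meridian disk as a co-core.

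First I would identify which annular piece of $\partial_0C$ the arc $\beta=\partial D\cap\partial_0C$ lies in. Write $\partial D=\gamma\cup\beta$ with $\gamma$ an essential arc in $\partial_+C$ and $\beta$ an essential arc in $\partial_0C$; the two arcs share the same pair of endpoints, and those endpoints lie on $\partial_+C$. Now $\partial_0C=[\partial(\partial_-C)\times I]\cup\partial_0b^0$ is a union of annuli. An annulus of the form $c\times I$ with $c$ a boundary curve of $\partial_-C$ has one boundary circle on $\partial_-C$ and the other on $\partial_+C$, so any essential (hence spanning) arc of it has an endpoint on $\partial_-C$, which is impossible for $\beta$. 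Hence $\beta\subset\partial_0b$ for some bead $b$, and, being essential, $\beta$ is a spanning arc of the annulus $\partial_0b$.

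Next I would push $D$ into the bead $b$. Fix a cut system $\mathcal{D}$ for $C$, so that $C|\mathcal{D}=(\partial_-C\times I)\cup b^0$, and isotope $D$ to meet $\mathcal{D}$ in as few components as possible. As in Lemma~\ref{lemma:negboundaryincomp}, innermost-disk arguments using irreducibility of $C$ eliminate all circles of $D\cap\mathcal{D}$; a standard outermost-arc argument (surgering $D$, or the cut system, along outermost subdiscs) then eliminates all arcs. When $D\cap\mathcal{D}=\emptyset$ the disk $D$ lies in a single component of $C|\mathcal{D}$, and since $\partial D$ meets $\partial_0b$ that component is the bead $b=A\times I$. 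Thus $D$ is a properly embedded disk in the solid torus $b$ whose boundary meets the annulus $\partial_0b$ in a single spanning arc; since a spanning arc crosses the core of $\partial_0b$ once, $\partial D$ is essential in $\partial b$ and, bounding the disk $D$, must be a meridian. So $D$ is a meridian disk of $b$ meeting $\partial_0b$ in a spanning arc, which is precisely (up to isotopy) the co-core $[s\times I]$ of $b$.

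I expect the main obstacle to be this reduction step: removing the arcs of $D\cap\mathcal{D}$ requires a little care, since the boundary arc of an outermost subdisc of $D$ may lie on $\partial_+C$ or on $\partial_0C$, and one must check that after each surgery one still has either a cut system for $C$ or a $\partial_0$-compressing disk for $\partial_+C$. If one prefers to avoid cut systems, the same conclusion can be reached directly: thicken $D$ to $D\times[-1,1]$, let $R$ be the rectangle in $\partial_0b$ complementary to a neighborhood of $\beta$, push $R$ slightly into $b$ to build a compressing disk $D'=D_1'\cup D_2'\cup(R\times\{\epsilon\})$ for $C$, and verify that the component of $C|D'$ containing $\partial_0b$ is the solid torus $D\times[-1,1]\cup(R\times[0,\epsilon])$ — a bead whose co-core is $D$.
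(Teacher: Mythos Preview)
Your argument is correct, and in fact you give two proofs. Your primary route---making $D$ disjoint from a cut system $\mathcal{D}$ so that $D$ falls into a single component of $C|\mathcal{D}$, necessarily the bead $b$ containing $\beta$---is genuinely different from the paper's. The paper instead takes exactly the ``direct'' construction you sketch in your last paragraph: it thickens $D$ to $D\times[-1,1]$, builds the compressing disk $D'=D_1'\cup D_2'\cup(R\times\{\epsilon\})$ from the two parallel copies together with the complementary rectangle $R\subset\partial_0b$ pushed in, and observes that the component of $C|D'$ containing $\partial_0b$ is the solid torus $D\times[-1,1]\cup(R\times[0,\epsilon])$, a bead with co-core $D$.

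Your cut-system approach is conceptually parallel to the proof of Lemma~\ref{lemma:negboundaryincomp} and has the pleasant feature that it identifies $D$ with the co-core of one of the \emph{original} $0$-beads in the given handle--bead structure; the price is the outermost-arc reduction, which (as you anticipate) needs the check that after each surgery the new disk is still $\partial_0$-compressing. This holds because the surgery leaves $\beta$ untouched and the new arc on $\partial_+C$ still joins the two distinct boundary circles of $\partial_0b$, hence cannot be boundary-parallel in $\partial_+C$. The paper's approach bypasses cut systems entirely and manufactures a bead around $D$ in one step; it does not match that bead with any bead of the original decomposition, but the statement does not require this.
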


\subsection{Decomposing link exteriors}

In order to decompose manifolds with torus boundary components into boundary compression bodies, it is helpful to reframe the definition of a bead in terms of torus boundary.  For a link $L\subset S^3$ we will abuse notation and denote decomposition of $\overline{S^3 - n(L)}$, the exterior of $L$ in $S^3$, as decompositions of $L$.

In what follows let $M$ be a manifold with toral boundary components $\{T_i\}$.  Select a boundary component $T \epsilon \{T_i\}$.  We  parametrize slopes on $T$ by elements of  $\{\mathbb{Q} \cup \infty\}$, as in \cite{BoyDSoK}.  

\begin{definition}  Given a 2-dimensional handle decomposition of $T$, a $\emph{0-bead}$,  $b^0 \subset M$ is a regular neighborhood in $M$ of the union of a 2-dimensional 0-handle and a 2-dimensional 1-handle with slope $\alpha \epsilon \{\mathbb{Q} \cup \infty\}$ in $T$, and a $ \emph{2-bead}$,  $b^2 \subset M$ is a regular neighborhood in $M$ of the union of a 2-dimensional 1-handle with slope $\alpha \epsilon \{\mathbb{Q} \cup \infty\}$ and a 2-dimensional 2-handle in $T$, see Figure \ref{figure:bead}.  Let $\alpha \epsilon\{\mathbb{Q} \cup \infty\}$.  We call a bead $b^i$, $i=0,2$, an $\alpha$-sloped bead, if the associated 2-dimensional 1-handle has slope $\alpha$.  In this context $\partial_0 b^i$ denotes $b^i \cap T$ and $\partial_+ b^i$ denotes $\partial b^i - \partial_0 b^i$, see Figure \ref{figure:bead}.  
\end{definition}

\begin{figure}[htbp]
\begin{center}
\includegraphics[height=1.75in]{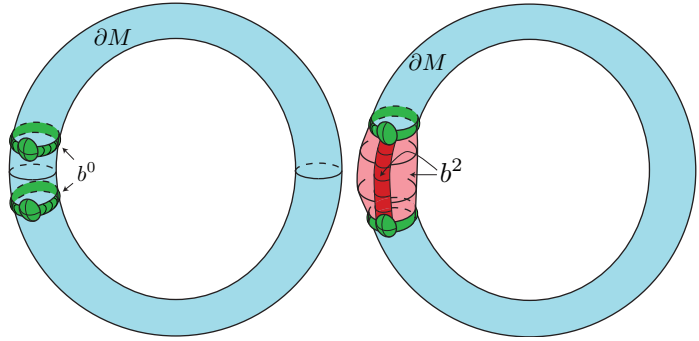}
\caption{A bead, and a 2-bead}
\label{figure:bead}
\end{center}
\end{figure}

Let $H$ be a compression body properly embedded in a 3-manifold with boundary $M$. If $\partial M\cap H \neq \emptyset$ then $\partial M \cap H \subset \partial_-H$.  In contrast, if $C$ is a boundary compression body with boundary slope $\alpha$ on a distinguished boundary component $T$ of $M$ then $T \cap C \neq \emptyset$ and $T \cap C \subset \partial_0 C$.  Note that if $C$ contains an $\alpha$-sloped bead on $T$ then $C$ has boundary slope $\alpha$.  

For any $\alpha \epsilon \{\mathbb{Q} \cup \infty\}$ $M$ can always be decomposed along a surface $F$ with boundary slope $\alpha$ on a distinguished boundary component $T$ into two boundary compression bodies.  We call such a decomposition an \emph{$\alpha$-sloped Heegaard splitting}.  To see this we modify a (closed surface) Heegaard splitting for $M$ to create an $\alpha$-sloped Heegaard splitting via a process called \emph{$\alpha$-stabilization}.

First we establish some notation.  Fix $\alpha\epsilon \{\mathbb{Q} \cup \infty\}$.

In any compression body $C$ with a torus component $T$ of $\partial_-C$ there exists a properly embedded annulus  $A$ with one boundary component on $\partial_+C$ and one on $T$ with slope $\alpha$.  Call this annulus \emph{$A_{\alpha}$, the $\alpha$-sloped spanning annulus for $T$}.

\begin{definition}
\label{definition:alphastab}

Let $C_1 \cup_S C_2$ be a Heegaard splitting of a compact orientable manifold M with distinguished torus boundary component $T\subset C_2$.  An \emph{$\alpha$-stabilization} of $C_1 \cup_S C_2$ is an $\alpha$-sloped Heegaard splitting for $M$ which results from the following:

 Let $c$ be a curve with slope $\alpha$ in $T$, let $*$ be a point in $c$, and let $\gamma$ be a properly embedded arc in $C_2$ connecting $S$ to $c$ which is unknotted in the sense that $C_2 - \gamma$ is a compression body.   Add a neighborhood of $c$ along with a neighborhood of $\gamma$ to $C_1$ and delete them from $C_2$.   The result is to transform $C_1$ into a boundary compression body $C_1'$ by adding a 0-bead $b$ with core $c$  and a one handle with core $\gamma$ to $C_1$.

Pick a spanning annulus $A_{\alpha}$  with the property that $A_{\alpha}\cup n(\gamma)$ is a single disk and  $A_{\alpha} - n(\gamma)$ is a single disk.    The compression body $C_2$ is also transformed into a boundary compression body $C_2'$ by adding a two handle with attaching curve $\partial (A_{\alpha} - n(\gamma))$ and a 2-bead $b'$ whose core is an essential curve in the annulus $T-c$.  See Figure \ref{figure:TXI}.
\end{definition}

\begin{figure}[htbp]
\begin{center}
\includegraphics[height=2in]{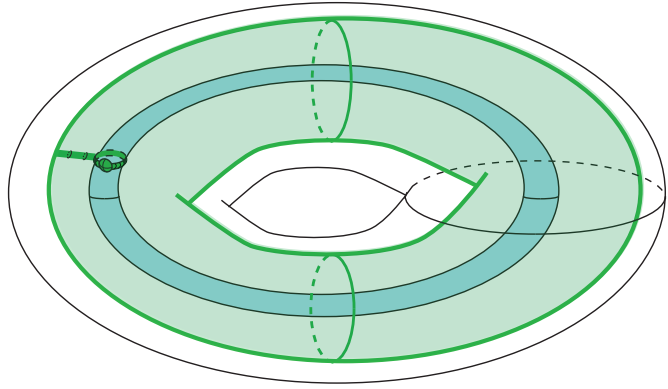}
\caption{An $\alpha$-stabilization of a Heegaard splitting of $T^2 \times I$}
\label{figure:TXI}
\end{center}
\end{figure}

%
%
%

%

The $\alpha$-stabilization of a closed Heegaard surface results in an $\alpha$-sloped Heegaard surface of the same genus, with two boundary components.   Note that any  boundary Heegaard surface can be $\alpha$-stabilized in the same way, resulting in a boundary Heegaard surface of the same genus with two more boundary components.

Observe that given an $\alpha$-stabilized Heegaard surface $S$, there exists a separating compressing disk $D^* \subset C_2'$ for $S$ such that one component of $S | D$ is a closed Heegaard surface, and the other component is a boundary parallel annulus with slope $\A$.

\begin{lemma}
\label{lemma:alphastabcomptoclosed} 

Let $M$ be a compact orientable manifold with torus boundary.  An $\alpha$-sloped Heegaard splitting $S$ of $M$ along $T$ is $\alpha$-stabilized if and only if there is a compressing disk $D$ for $S$ such that $S | D$ is the disjoint union of a Heegaard surface for $M$ which is either closed or $\alpha$-sloped and an $\A$-sloped boundary parallel annulus.  

\begin{proof}
Let $\gamma$ be as in the definition of $\alpha$-stabilization.  If a splitting is $\alpha$-stabilized then a meridian for $n(\gamma)$ is such a compressing disk.    

Suppose that for some disk $D$,  the result of compressing $S$ along $D$ is the disjoint union of a closed splitting $S'$ for $M$, and an $\A$-sloped boundary parallel annulus $A$.  Let $C_1$, $C_2$ be the boundary compression bodies bounded by $S$, and $C_1'$, $C_2'$ the compression bodies bounded by $S'$.  For the sake of notation, assume $T\subset  C_1'$.  Thus $\partial A \subset C_1'$ and $A$ is boundary parallel into $T$.  The solid torus realizing the boundary parallelism is a bead $b$ contained in $C_1'$.  It is possible to isotope $A$ and $S'$ so that $A \cap S'$ is the surgery disk $D$.  

Now $C_2 \cap (A \times I)=D$ and the pre-surgery surface $S$ is a twice punctured torus bounding $C_2'$.  But $C_2$ can also be realized as a boundary compression body obtained from attaching $C_2$ to the $\A$-sloped bead $b$ by a 1-handle which is dual to $D$ and $S$ is an $\alpha$-stabilized Heegaard splitting of $M$.

\end{proof}

\end{lemma}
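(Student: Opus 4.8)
The plan is to handle the two implications in turn; the ``only if'' direction is essentially immediate, and all the work is in the ``if'' direction. For ``only if'': suppose $S$ is obtained by $\alpha$-stabilizing a (closed or boundary) Heegaard splitting with Heegaard surface $\widehat S$, stabilizing arc $\gamma$, and $0$-bead $b$ with core $c$ of slope $\alpha$, as in Definition~\ref{definition:alphastab}. Then a co-core (meridian) disk $D$ of the $1$-handle $n(\gamma)$ is a compressing disk for $S$, and compressing $S$ along $D$ simply undoes the stabilization: it splits off $\partial_+ b$, which is an annulus of slope $\alpha$ that is boundary parallel into $T$ through the bead $b$, and returns $\widehat S$. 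This is the observation recorded immediately before the lemma, so there is nothing further to do in this direction.

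For the ``if'' direction, suppose $D$ is a compressing disk for $S$ with $S|D = \widehat S \sqcup A$, where $\widehat S$ is a Heegaard surface for $M$ (closed or $\alpha$-sloped) and $A$ is an $\alpha$-sloped boundary parallel annulus. First I would extract the bead: let $C_1',C_2'$ be the (boundary) compression bodies bounded by $\widehat S$. Since $A$ is disjoint from $\widehat S$ (they are distinct components of $S|D$), $A$ lies in one of $C_1',C_2'$; relabel so that $A\subset C_1'$. The parallelism of $A$ into $T$ cuts off from $C_1'$ a solid torus $b$ with $\partial b = A\cup A_T$, where $A_T\subset T$; since $\partial A$ has slope $\alpha$, this $b$ is an $\alpha$-sloped bead with $\partial_+ b = A$ and core $c\subset T$ of slope $\alpha$, and the fact that the parallelism is into $T$ forces $T$ to meet $C_1'$ and $b\subset C_1'$.

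Next I would reverse the compression to see the $\alpha$-stabilization structure. Compressing $S$ along $D$ is undone by tubing $\widehat S$ to $A$ along an arc $\gamma$ whose regular neighborhood is the tube and whose meridian disk is $D$; after an isotopy of $A$ and $\widehat S$ I may assume $\gamma$ runs from $\widehat S$ to $\partial_+ b = A$ through the part of $C_1'$ between $\widehat S$ and $A$ (the arc cannot enter the bead $b$, since $b$ is disjoint from $\widehat S$). One then checks that $\gamma$ is unknotted in the precise sense of Definition~\ref{definition:alphastab}, namely that deleting a neighborhood of $\gamma$ from the side of $S$ on which $D$ lies yields a compression body; this uses that $\widehat S$ is a Heegaard surface and that $b$, being a boundary parallel solid torus, sits inside $C_1'$ as a standard collar-type piece. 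With $\gamma$ and $b$ in hand, the two sides of $S$ are recognized as $C_1'$ with the $0$-bead $b$ and $1$-handle $n(\gamma)$ attached, and $C_2'$ with a dual $2$-handle and a complementary $2$-bead attached; this is exactly the output of $\alpha$-stabilizing the splitting $C_1'\cup_{\widehat S}C_2'$ along $\gamma$, so $S$ is $\alpha$-stabilized.

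The step I expect to be the main obstacle is this last one --- verifying that the arc $\gamma$ read off from $D$ really satisfies the unknotting hypothesis of Definition~\ref{definition:alphastab}, so that the decomposition produced is genuinely an $\alpha$-stabilization and not merely a surface that resembles one. Two secondary points need care. First, the two cases of the hypothesis must be treated uniformly: when $\widehat S$ is $\alpha$-sloped, the relevant construction is the $\alpha$-stabilization of a \emph{boundary} Heegaard splitting (the remark following Definition~\ref{definition:alphastab}), and one should check that the bead-and-handle bookkeeping and the placement of $T$ relative to $C_1',C_2'$ are unchanged. Second, one must keep track of which of the two sides of $S$ the disk $D$ --- equivalently the bead $b$ --- belongs to; this is routine, but should be stated cleanly so that the identification with Definition~\ref{definition:alphastab} is literal.
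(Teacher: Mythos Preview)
Your plan matches the paper's proof: both directions are handled the same way, with the meridian of $n(\gamma)$ for ``only if'' and, for ``if'', extracting the bead $b$ from the boundary parallelism of $A$ and recognizing the $1$-handle dual to $D$ as the $\alpha$-stabilizing tube. The paper's write-up is terser (and only treats the case where $\widehat S$ is closed), but the architecture is identical.

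One remark on the point you flag as the main obstacle. The unknotting of $\gamma$ is not an extra hypothesis to be laboriously verified; it falls out of the assumption that $S$ itself is a boundary Heegaard splitting. With your labeling, the side of $\widehat S$ containing $T$ is $V'=W\cup n(D)\cup b$, where $W$ is one of the two boundary compression bodies for $S$. Taking $\gamma$ to be the core of $n(D)$ extended by a vertical arc of the bead to $c\subset T$, one has $V'\setminus\bigl(n(\gamma)\cup n(c)\bigr)=W$, which is a boundary compression body by hypothesis. That is exactly the unknotting condition of Definition~\ref{definition:alphastab}, and it is implicitly what the paper means when it says ``$C_2$ can also be realized as a boundary compression body obtained from attaching $C_2'$ to the $\alpha$-sloped bead $b$ by a $1$-handle which is dual to $D$.'' So your plan is complete once you make this identification explicit; the secondary bookkeeping points you list (which side $D$ lies on, the $\alpha$-sloped $\widehat S$ case) are, as you say, routine.
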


An $\alpha$-sloped Heegaard splitting gives rise to a handle and bead decomposition of $M$: $M=\mathcal{H}^0 \cup b^0 \cup \mathcal{H}^1 \cup \mathcal{H}^2 \cup b^2 \cup \mathcal{H}^3$.   Given such a decomposition,  in some cases the additions of handles and beads can be reordered, leading to different $\alpha$-sloped decomposition of $M$.  $M=\mathcal{H}^0 \cup {b^0}_1 \cup {\mathcal{H}^1}_1 \cup {\mathcal{H}^2}_1 \cup {b^2}_1 \cup {b^0}_2 \cup {\mathcal{H}^1}_2 \cup {\mathcal{H}^2}_2 \cup {b^2}_2 \cup ... \cup {b^0}_k \cup {\mathcal{H}^1}_k \cup {\mathcal{H}^2}_k \cup {b^2}_k \cup \mathcal{H}^3$.  

We consider the boundary of the series of submanifolds resulting from each addition of a handle or bead. A natural collection of interesting surfaces in $M$ arises.  We define two classes of surfaces given by a specific decomposition of $M$, the \emph{thick} and \emph{thin} surfaces.  Roughly  the \emph{thick surfaces}, $S_i$, are the boundaries of submanifolds which are the result of adding an entire collection of 0-beads ${b^0}_i$ and 1-handles  ${\mathcal{H}^1}_i$ and the \emph{thin surfaces}, $F_i$, are the boundaries of submanifolds which result from adding an entire collection of 2-handles  ${\mathcal{H}^2}_i$ and 2-beads ${b^2}_i$.  More precisely:

\begin{definition}
Let the $i$th \emph{thick surface} $S_i$, $1 \leq i \leq k$, be the surface obtained from $\partial [{H}^0 \cup {b^0}_1 \cup {\mathcal{H}^1}_1 \cup {\mathcal{H}^2}_1 \cup {b^2}_1 \cup ... \cup {b^0}_i \cup {\mathcal{H}^1}_i]$ by deleting all spheres which bound 0- or 3-handles and all boundary parallel annuli which bound beads or 2-beads in the decomposition.
\end{definition}

\begin{definition}
  Let the $i$th \emph{thin surface} $F_i$, $1 \leq i \leq k-1$ be the surface obtained from $\partial [{H}^0 \cup {b^0}_1 \cup {\mathcal{H}^1}_1 \cup {\mathcal{H}^2}_1 \cup {b^2}_1  \cup ... \cup {b^0}_i \cup {\mathcal{H}^2}_i]$ by deleting such spheres and annuli.  
  \end{definition}

  \begin{figure}[htbp]
\begin{center}
\includegraphics[height=2in]{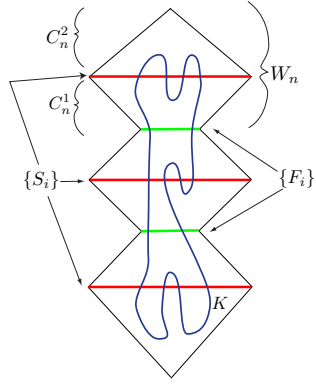}
\caption{An $\alpha$-sloped decomposition of $M$ cuts $M$ into simple pieces.}
\label{figure:surfacelabels}
\end{center}
\end{figure}

We call the collection of surfaces $\{F_i,S_i\} \subset M$ an \emph{$\alpha$-sloped decomposition of $M$ along $T$} or  an \emph{$\alpha$-sloped generalized Heegaard splitting for $M$ along $T$}.  If $M$ has a single boundary component, or the relevant boundary component is clear, we simply call it an \emph{$\alpha$-sloped generalized Heegaard splitting of $M$}.   
  
Let $W_i=(F_{i-1} \times I)\cup {b^0}_i \cup  {\mathcal{H}^1}_i \cup  {\mathcal{H}^2}_i \cup  {b^2}_i$ together with any 0- or 3-handles and any beads or 2-beads incident to  ${\mathcal{H}^1}_i$, ${b^1}_i$,   ${\mathcal{H}^2}_i$, and ${b^2}_i$. Then $M | \{F_i\}$ is the collection of 3-manifolds $W_i$ with Heegaard splittings $W_i =C_i^1\cup_{S_i} C_i^2$, see Figure \ref{figure:surfacelabels}.  Two $\A$-sloped decompositions $\{F_i,S_i\} , \{F_i',S_i'\}$ are isotopic if $F_i$ is isotopic to $F_i'$  and $S_i$ is isotopic to $S_i'$ in $M$ for all $i$.   


\begin{definition}Given a connected surface with  boundary  $S$ of genus $g>0$, properly embedded in $M$, the complexity of $S$, $c(S)$, is given by $c(S)=1-\chi(S)+g(S)$. Define $c(S^2)=0$. If $S$ is disconnected,  $c(S)=\sum \{c(S^*)| S^*$ is a connected component of $S\}$.

\end{definition}

Let $\{F_i,S_i\}$ be an $\alpha$-sloped decomposition of $M$.  We define the \emph{width of $\{F_i,S_i\}$}, $w(M, \{F_i,S_i\})$, to be the set of integers $\{c(S_i)\}$.  We order finite multi sets by arranging the integers in monotonically non-increasing order and compare the ordered multi-sets lexicographically.  

Let $\beta \epsilon \mathbb{Q}$.  Define the \emph{$\beta$-width, $w(M,\beta)$,  of $M$} to be the minimal width over all $\beta$-sloped decompositions, using the above ordering of multi-sets.  The $\infty$-width $w(M,\infty)$ is the minimal width over all meridional surface decompositions.  This notion is similar to both the concept of thin position of the pair $(M^3, c)$, where $c$ is a 1-sub-manifold of $M$ of Hayashi and Shimokawa \cite{HayShiTPoP3M1S} as well as of Tomova \cite{TomTPfKi3M} in the case that $c$ is a closed curve.   Define the \emph{empty-width $w(M,\emptyset)$ of $M$} to be the minimal width over all closed surface decompositions of $M$, which a similar notion to Scharlemann and Thompson's thin position for 3-manifolds \cite{SchaThoTPf3M}, but with a different measure of complexity.      


Define the \emph{width $w(M)$ of M} to be the minimal width of $w(M,\alpha)$ over all $\alpha \epsilon\{\mathbb{Q} \cup \infty \cup \emptyset \}$.  We call any decomposition $\{F_i,S_i\}$ of $M$ \emph{thin} if it realizes the width of $M$ and denote it $thin(M)$.   For $\alpha \epsilon\{\mathbb{Q} \cup \infty \cup \emptyset \}$ we denote any $\alpha$-sloped decomposition realizing $w(M, \A)$, $thin(M, \A)$.

By the above measure of complexity, the complexity of a surface goes down when the surface is either compressed or boundary compressed, which is clearly desirable: a decomposition goes down in width if the decomposing surfaces are obviously simplified, see Figure \ref{figure:compressing}.

  \begin{figure}[htbp]
\begin{center}
\includegraphics[width=3in]{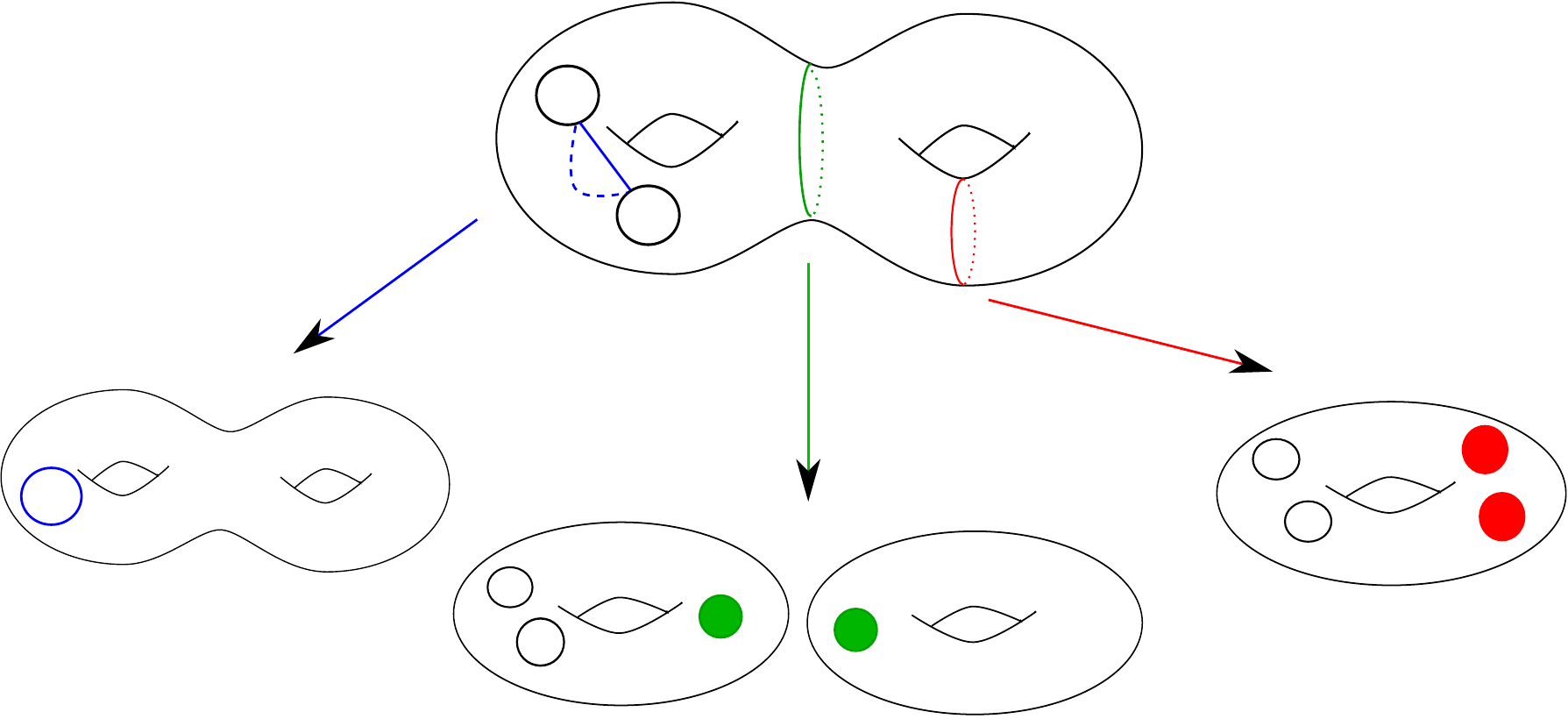}
\caption{Surface complexity goes down with either compression or boundary compression.}
\label{figure:compressing}
\end{center}
\end{figure}

%

Given a knot in $K$ in $S^3$, it is always possible to position $K$ in relation to a Heegaard sphere $S$ for $S^3=B_1 \cup_S B_2$ in such a way that $K\cap B_i$ for $i=1,2$ is a collection of $n$ arcs  which are boundary parallel in $B_i$.  This presentation of a knot is called \emph{a bridge position}.  


This idea can be generalized to knots in other manifolds, positioning $K \subset M^3$ in relation to a Heegaard surface.  For $n \geq 1$, we will say that $K\subset M^3$ is $(t,n)$ if $K$ can be put in $n$-bridge position with respect to a  genus $t$ Heegaard surface $S$. We will say that $K$ is $(t,0)$ if $K$ can be isotoped into $S$. If $K$ is $(t,n)$ for some $n$ then $K$ is $(t, m)$ for every $m \geq n$. Thus we are concerned with the smallest $n$ such that $K$ is $(t, n)$.

In \cite{JohThoTN1KWaN1n} it was shown that there are knots $K\subset S^3$ which are tunnel number one, which are not (1,1).  In the language of $\alpha$ sloped decompositions this means that there are knots whose exteriors have a closed genus two Heegaard surface, but do not have a  twice punctured genus 1 meridional Heegaard surface.  Any knot which has a twice punctured genus 1 meridional Heegaard surface, $S$,  does have a closed genus two Heegaard surface realized by tubing $S$ along $K$ , see Figure \ref{figure:complexitytube}.  Acknowledging this, we consider the (1,1) decomposition, or the twice punctured genus 1 Heegaard surface, to be a simpler decomposition than a tunnel number 1, or closed genus 2, decomposition.  This is reflected in the measure of complexity of surfaces.  In general, the complexity of a surface goes up when the surface is tubed along a boundary component.


  \begin{figure}[htbp]
\begin{center}
\includegraphics[width=3in]{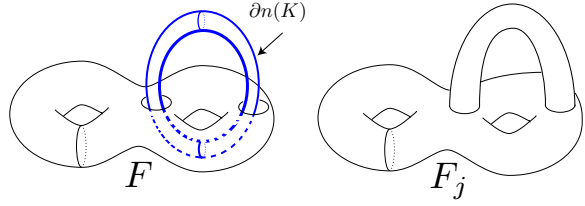}
\caption{Surface complexity goes up when the surface is tubed along a boundary component of $M$.}
\label{figure:complexitytube}
\end{center}
\end{figure}

Let $F$ be a surface with boundary.  Let $|\partial F|$ to be the number of boundary components of $F$.  It follows from our previous observations about boundary compression bodies, see section \ref{subsec: BoundaryCompressionBodies}:

\begin{lemma}
\label{lemma:boundary+lessboundary}
In a boundary compression body $C$, $c(\partial_- C) \leq c(\partial_+ C)$ and $|\partial(\partial_- C)| \leq |\partial(\partial_+ C)|$.  
\end{lemma}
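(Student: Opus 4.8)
The plan is to use the dual description of a boundary compression body $C$ and track what happens to complexity and boundary-component count as one builds $C$ from $\partial_+C$. Recall that $C$ is obtained from $\partial_+C \times I$ by attaching a collection of 2-handles $\mathcal{H}^2$, then attaching 2-beads $b^2$ to the annular components created, then capping spherical boundary components with 3-handles; after all this, $\partial_-C$ is what remains of the ``top'' boundary. So I would set up the induction on the number of handle/bead attachments, starting from $C_0 = \partial_+C\times I$ where $\partial_- C_0 = \partial_+ C_0 = \partial_+ C$, so both inequalities hold with equality.

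For the inductive step I would check each of the three types of additions separately. Attaching a 2-handle to the evolving upper boundary surface either (i) compresses it along an essential curve, replacing a surface piece by one of strictly smaller complexity and either the same number of boundary components (if the curve is non-separating) or the same total number (if separating, splitting one component into two components of the same two pieces) — in every case $c$ does not increase and $|\partial(\cdot)|$ does not increase, using that compression lowers $c(S)$ by the complexity inequality already recorded (see Figure \ref{figure:compressing}) and that a 2-handle attached along an arc-free curve does not create new boundary; or (ii) it caps off a curve bounding a disk, which can be absorbed/ignored. Capping a spherical boundary component with a 3-handle removes an $S^2$ (complexity $0$, no boundary) and so changes neither quantity. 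Finally, attaching a 2-bead $b^2$ along an annular component $A$ of the current upper boundary: this replaces $A$ by $\partial_+ b^2$, i.e. it removes an annulus (which has $c(A)=1-\chi+g = 1-0+0 = 1$? — here $\chi(A)=0$, $g=0$, so $c(A)=1$; but $A$ is annular and boundary-parallel, the kind of piece the thin/thick surfaces are defined to delete) from the surface and seals off that boundary; concretely it deletes an annular component, decreasing $|\partial(\cdot)|$ by two and $c$ by at least the complexity of that annulus. In every case the two monitored quantities are non-increasing, so at the end $c(\partial_-C)\le c(\partial_+C)$ and $|\partial(\partial_-C)|\le |\partial(\partial_+C)|$.

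Alternatively — and this is probably cleaner — I would invoke Lemma \ref{lemma:negboundaryincomp}, which says $\partial_-C$ is incompressible and $\partial_0$-incompressible in $C$: this lets me realize $\partial_+C$ (up to isotopy and discarding trivial sphere/annulus pieces) as the result of a maximal sequence of compressions and $\partial_0$-compressions of $\partial_+C$ done ``away from'' $\partial_-C$, so that $\partial_-C$ appears as a subsurface of a surface obtained from $\partial_+C$ by compressions. Each such compression strictly drops $c$ and does not raise $|\partial|$ (a plain compression fixes $|\partial|$ or splits a component into two, and a $\partial_0$-compression removes one arc-crossing, hence removes or merges boundary components without increasing their number); passing to a subsurface only drops both quantities further. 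I would phrase the argument via the cut system $\mathcal{D}$ for $C$: cutting along $\mathcal{D}$ yields $(\partial_-C\times I)\cup b^0$, so $\partial_-C$ together with the cocores/spanning arcs of beads is exactly $\partial_+C$ surgered along $\mathcal{D}$ and the arcs of $\partial_0C$, and one reads off the inequalities from the effect of each surgery.

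The main obstacle I anticipate is bookkeeping with the annular/spherical pieces that are deleted in forming the honest boundary surfaces — i.e. making sure the ``$\le$'' genuinely holds (not just for the un-pruned boundary) and that $\partial_0$-compressions, which interact with $\partial_0 C$ and hence with beads, are handled correctly; by Lemma \ref{lemma:boundaryplusboundarycomp} such a $\partial_0$-compressing disk for $\partial_+C$ is always a bead cocore, so removing it corresponds exactly to deleting a 2-bead's annular contribution, which is precisely a case I need to account for. Once that correspondence is pinned down the inequalities are immediate from $c$ dropping under compression and $|\partial|$ being non-increasing under each elementary move.
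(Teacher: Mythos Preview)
The paper does not actually supply a proof of this lemma: it is stated as following directly ``from our previous observations about boundary compression bodies'' in Section~\ref{subsec: BoundaryCompressionBodies}, with no further argument. Your first approach --- tracking $c$ and $|\partial(\cdot)|$ through the dual description of $C$ as $\partial_+C\times I$ with 2-handles, 2-beads, and 3-handles attached --- is correct and is precisely the kind of verification the paper is gesturing at: each 2-handle compresses the evolving upper surface (dropping $c$, leaving the number of boundary circles unchanged), each 2-bead deletes an annular component (dropping $c$ by $1$ and $|\partial|$ by $2$), and each 3-handle deletes a sphere (changing nothing, since $c(S^2)=0$).

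Two places in your write-up need tightening. First, in the main argument you blur ``boundary components'' (circles in $\partial S$) with ``connected components of $S$''; a compression along a simple closed curve never changes $|\partial S|$, whether or not the curve separates, so that sentence should be rewritten. Second, in your alternative approach the claim that a $\partial_0$-compression ``removes or merges boundary components without increasing their number'' is false in general: if the arc in $\partial_+C$ has both endpoints on a single boundary circle, $|\partial|$ \emph{increases} by one. You are rescued exactly by Lemma~\ref{lemma:boundaryplusboundarycomp}, which forces any $\partial_0$-compressing disk for $\partial_+C$ to be a bead cocore, so that its arc is a spanning arc of some $\partial_0 b$ joining two distinct boundary circles; you invoke that lemma, but you should say explicitly that this is why the $|\partial|$ bookkeeping goes through. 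The cleanest phrasing of your alternative is: cut $C$ along a cut system $\mathcal{D}$ to obtain $(\partial_-C\times I)\cup b^0$, so that $\partial_+C$ compressed along $\mathcal{D}$ is $\partial_-C$ together with the annuli $\partial_+ b^0$, from which both inequalities are immediate.
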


A boundary compression body in which these values are equal is simply $\partial_-C \times I$ and is called a \emph{trivial} boundary compression body.

Notice that we do not require all surfaces in an $\A$-sloped decomposition to have boundary, only that there must be at least one $\alpha$-sloped bead in the handle and bead decomposition.  There are however restrictions on when a decomposing surface can be closed.  

\begin{lemma}
\label{lemma:thinclosed}
Let $M$ be a manifold with torus boundary component $T$ and let $\{F_i, S_i\}$, $i=1...n$, be an $\A$-sloped decomposition of $M$ along $T$.  If some $F_j$ is a closed surface then either all of $\{F_k, S_k\}$, $1\leq k \leq j$ are closed or all of $\{F_k, S_k\}$, $j \leq k \leq n$ are closed.

\begin{proof}

Since $F_i$ is a closed separating surface, and $T$ is connected, $T$ is entirely contained in  one component of $M | F_i= M_1 \cup M_2$, say $M_1$.  Any decomposition of $M_1$ must be a closed decomposition.  Either $\{F_k, S_k\}$, $1\leq k \leq j$ or $\{F_k, S_k\}$, $j \leq k \leq n$ is a decomposition of $M_1$, and thus all the surfaces are closed.  
\end{proof}
\end{lemma}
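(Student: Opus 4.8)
The plan is to use the connectedness of $T$ together with the fact that every bead in the handle-and-bead decomposition is carried by $T$. Since $F_j$ is closed and separating, cutting gives $M\,|\,F_j = M_1 \sqcup M_2$, and because the pieces $W_i$ are glued to one another only along the thin surfaces $F_i$, this cut respects the linear order: one of $M_1, M_2$ is $W_1 \cup \cdots \cup W_j$ and the other is $W_{j+1} \cup \cdots \cup W_n$. Every decomposing surface $S_i$ and $F_i$ has its boundary (if any) on $T$, so in particular each is disjoint from the closed surface $F_j$; hence each lies on one side of $F_j$, and the surfaces lying in a fixed side are those with indices in the corresponding half of $\{1, \dots, n\}$.

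Next I would observe that $T$, being connected and disjoint from $F_j$, lies entirely in one of $M_1, M_2$; relabel so that $T \subset M_1$, so that $M_2 \cap T = \emptyset$. The restriction of the global decomposition to $M_2$ is itself a handle-and-bead decomposition of $M_2$, with its own thick and thin surfaces inherited from the $S_i, F_i$ that lie in $M_2$. But by definition each $0$-bead and $2$-bead is a regular neighbourhood of a $2$-dimensional $1$-handle carried by $T$, and $M_2$ meets $T$ in the empty set, so $M_2$ contains no beads at all. Therefore each boundary compression body $C_i^1, C_i^2$ occurring inside $M_2$ has empty bead collection and $\partial(\partial_\pm C) = \emptyset$, i.e.\ is an ordinary compression body, and hence its $\partial_\pm$ is a closed surface. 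Consequently every thick and thin surface of the decomposition lying in $M_2$ is closed.

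Finally I would read off the conclusion: the decomposing surfaces contained in $M_2$ are, up to the indexing conventions, precisely the family $\{F_k, S_k\}$ for one of the two ranges $1 \le k \le j$ or $j \le k \le n$ (the one matching whichever of $W_1 \cup \cdots \cup W_j$ or $W_{j+1} \cup \cdots \cup W_n$ equals $M_2$), they are all closed, and $F_j$ itself is closed by hypothesis; this is exactly the stated dichotomy. I do not expect a genuine obstacle here. The only point needing care is the bookkeeping that identifies ``the decomposing surfaces lying in $M_2$'' with exactly one of the two stated index ranges, together with the essentially definitional step that a bead-free boundary compression body is a compression body, so that ``no boundary on $T$'' upgrades to ``closed''.
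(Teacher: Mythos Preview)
Your argument is correct and follows essentially the same line as the paper's own proof: cut along the closed separating surface $F_j$, observe that the connected torus $T$ lies entirely on one side, and conclude that the decomposing surfaces on the other side must all be closed. Your version is in fact more carefully stated than the paper's---the paper places $T$ in $M_1$ and then asserts that the decomposition of $M_1$ is closed, when of course it is the complementary piece (your $M_2$) whose surfaces are forced to be closed; your added justification via the bead-free compression bodies makes this explicit.
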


In order to examine ways in which decomposing surface can be simplified, we recall and generalize some classical notions of Heegaard splittings.  

\begin{definition}We call an $\alpha$-sloped Heegaard splitting $(C_1, C_2, S)$ \emph{weakly reducible} if there is a pair of compressing disks $D_i \subset C_i$, i=1,2, such that $D_1 \cap D_2 = \emptyset$.  

An $\alpha$-sloped Heegaard splitting which is not weakly reducible is \emph{strongly irreducible}, and any compressing disk $D_1 \subset C_1$ intersects every compressing disk $D_2 \subset C_2$.  
\end{definition}

%

\begin{definition}We call an $\alpha$-sloped Heegaard splitting $(C_1, C_2, S)$ \emph{boundary weakly reducible} if there is a pair of disks $D_1, D_2 \subset C_1, C_2$ respectively, each of which is either a $\partial_0$-compressing disk or compressing disk for $C_i, i=1,2$ with $D_1$ and $D_2$ disjoint.  

An $\alpha$-sloped Heegaard splitting which is not boundary weakly reducible is \emph{strongly boundary irreducible}, and any compressing disk or $\partial_0$-compressing disk $D_1 \subset C_1$ intersects every compressing disk or $\partial_0$-compressing disk  $D_2 \subset C_2$.  

\end{definition}

Weakly reducible splittings are weakly boundary reducible, and strongly boundary irreducible splittings are strongly irreducible.  

Note that $\A$-stabilization of  non-trivial Heegaard splittings are weakly reducible.  Also note that the disk $A_{\alpha} - n(\gamma)=D_2$ and a $\partial_0$-compressing disk for $b$, $D_1$ intersect in a single point.  

The process of $\alpha$-stabilization guarantees the existence of multiple non-isotopic $\alpha$-sloped Heegaard splittings of a given manifold, but the non-uniqueness of $\A$-sloped decompositions occurs in a non-trivial way as well.  

\begin{theorem}It is possible to have two non-isotopic $\alpha$-sloped generalized Heegaard splittings of the same width.  

\begin{proof} In \cite{GunKWDPPaPSR} Guntel constructs pairs of twisted torus knots $K_1$, $K_2\subset S^3$ each of which sit in a genus two Heegaard surface $S$ for $S^3=H_1 \cup_S H_2$ with the same surface slope $\alpha$, but which are not isotopic in the surface.  Furthermore they are the same knot $K \subset S^3$.    Two non-isotopic $\alpha$-sloped generalized Heegaard splittings for $K$ can be constructed as follows:  In both cases, build a core of $H_1$ with a 0-handle and two 2-handles, and attach a $\alpha$-sloped 0-bead, attached to the core by a 1-handle $\mathcal{H}$.  Attach a 2-handle along $\partial (A_\alpha - \mathcal{H})$ for a choice of an $\A$-sloped spanning annulus $A_\alpha$ with the property that $A_\alpha - \mathcal{H}$ is a disk.  Call the resulting submanifold $M_i$.  The surface $\partial M_i -\partial M$ is isotopic to $\overline{S - n(K_i)}$.  In each case, attach a 1-handle, followed by a collection of 2-handles, capping-beads, and 3-handles.  Each splitting has a single thin surface, $\overline{S-n(K_i)}$. Since these surfaces are not isotopic, neither are the splittings.  
\end{proof}

\end{theorem}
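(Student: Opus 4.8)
The plan is to produce the two splittings from a single knot $K\subset S^3$ that lies in one genus-two Heegaard surface $S$ of $S^3=H_1\cup_S H_2$ in two genuinely different ways having the \emph{same} surface slope $\alpha$, and to record the two placements by the (unique) thin surfaces of the resulting decompositions. Guntel's twisted torus knots \cite{GunKWDPPaPSR} furnish exactly such a $K$: curves $K_1,K_2\subset S$ with common surface slope $\alpha$ that are not isotopic inside $S$ yet represent the same knot type in $S^3$. I will build, for each $i\in\{1,2\}$, an $\alpha$-sloped generalized Heegaard splitting $\mathcal{G}_i$ of $M=\overline{S^3-n(K)}$ whose only thin surface is $F_i:=\overline{S-n(K_i)}$.

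For the construction, fix $i$, build a core of $H_1$ from a $0$-handle and two $2$-handles, and then, imitating the $\alpha$-stabilization move of Definition \ref{definition:alphastab}, attach an $\alpha$-sloped $0$-bead $b^0$ whose core is a slope-$\alpha$ curve on $T=\partial n(K)$, join it to the core by a single $1$-handle $\mathcal{H}$, and attach a $2$-handle along $\partial(A_\alpha-\mathcal{H})$ for a spanning annulus $A_\alpha$ chosen so that $A_\alpha-\mathcal{H}$ is a disk. Call the result $M_i$. The one step that uses anything about $K_i$ is the verification that $\partial M_i-\partial M$ is isotopic to $F_i$, which is routine from the standard picture of a curve carried by $S$. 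Completing $M$ by attaching to $M_i$ a $1$-handle and then a collection of $2$-handles, $2$-beads and $3$-handles gives $\mathcal{G}_i$, with a single thick surface $S_i$ and a single thin surface $F_i$.

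Equality of widths should then be immediate: $\mathcal{G}_1$ and $\mathcal{G}_2$ are assembled in the same combinatorial pattern — the same number of handles of each index, the same beads, and surfaces of the same genus and the same number of boundary components at each stage (the primitive/primitive symmetry of Guntel's examples makes the two completions match) — so the width multisets agree, each being the singleton given by the common complexity $c(S_1)=c(S_2)$. Non-isotopy of $\mathcal{G}_1$ and $\mathcal{G}_2$ then reduces, since an isotopy of decompositions carries thin surfaces to thin surfaces, to showing that $F_1$ and $F_2$ are not isotopic in $M$.

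This last point is the step I expect to be the main obstacle. Guntel's hypothesis only gives non-isotopy of $K_1$ and $K_2$ \emph{inside} $S$, whereas $F_1$ and $F_2$ are surfaces with boundary of slope $\alpha$ on $T$ and an a priori ambient isotopy between them in the $3$-manifold $M$ need not preserve $S$. The plan to close the gap is to recover $S$ from each $F_i$ — capping $F_i$ off with the annulus $S\cap n(K)$ returns the fixed surface $S$ — and to argue that because any such isotopy fixes the slope $\alpha$, which is exactly the $S$-framing of $K$, it extends across $n(K)$ to a self-homeomorphism of $S^3$ taking $S$ to $S$ and $K_1$ to $K_2$, hence an isotopy of $K_1$ onto $K_2$ within $S$, contradicting Guntel. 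Making the extension-across-$n(K)$ step and the ``homeomorphism of the pair $(S^3,S)$ versus isotopy in $S$'' bookkeeping precise is the real content; everything else is routine handle-decomposition accounting.
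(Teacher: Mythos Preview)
Your approach is essentially the paper's: use Guntel's pair $K_1,K_2$ on a genus-two Heegaard surface $S$ with common surface slope $\alpha$, build for each $i$ an $\alpha$-sloped generalized Heegaard splitting of the exterior of $K$ whose unique thin level is $F_i=\overline{S-n(K_i)}$, and conclude non-isotopy of the splittings from non-isotopy of the $F_i$. The construction you give matches the paper's essentially handle for handle.

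Two remarks. First, a small accounting slip: a decomposition with a single thin surface has \emph{two} thick surfaces, one on each side, so the width is a multiset of size two, not a singleton. This does not affect the argument---the two constructions are combinatorially identical, so the two width pairs agree---but you should say $w(\mathcal G_1)=\{c(S_1^1),c(S_1^2)\}=\{c(S_2^1),c(S_2^2)\}=w(\mathcal G_2)$ rather than $\{c(S_1)\}=\{c(S_2)\}$.

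Second, the step you flag as ``the main obstacle''---promoting non-isotopy of $K_1,K_2$ in $S$ to non-isotopy of $F_1,F_2$ in $M$---is one the paper simply asserts: it writes ``Since these surfaces are not isotopic, neither are the splittings'' and leaves it there. Your instinct that this deserves an argument is sound, and your sketch (cap off along $T$ with the $\alpha$-framed annulus to recover $S$, then argue an ambient isotopy of $F_1$ to $F_2$ induces an isotopy of $(S^3,S)$ carrying $K_1$ to $K_2$) is the natural route; the delicate point you already note, that Guntel's invariant must distinguish $K_1$ and $K_2$ up to ambient isotopy of the pair $(S^3,S)$ and not merely up to isotopy inside $S$, is exactly what one would need to check against \cite{GunKWDPPaPSR}. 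The paper does not address this, so you are being more careful here than the source, not less.
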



\section{Properties of Thin Decompositions}
\label{section:PropertiesofThinDecompositions}

In \cite{SchaThoTPf3M}, Scharlemann and Thompson showed that in a thin decomposition, the thick and thin surfaces, as well as the sub-manifolds resulting from cutting up a manifold along them, have nice properties.  If we consider ways in which it is possible to thin an $\A$-sloped decomposition, we see that similar properties are true in this context.  

If we are able to completely replace some collection of handles or beads with a different kind of handle or bead which results in simpler surfaces, then it is possible to thin a decomposition.  In that vein, following Rule 1 and Rule 2 of \cite{SchaThoTPf3M}, we see that:
\begin{lemma}
\label{lemma:thinsphereessential}
In a thin $\alpha$-sloped decomposition any sphere component of any $F_i$ is essential.  
\end{lemma}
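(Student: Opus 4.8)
The plan is to argue by contradiction, following the treatment of the analogous phenomenon in \cite{SchaThoTPf3M} (cf.\ their Rules 1 and 2). Suppose $\{F_i,S_i\}$ is thin but some $F_j$ has a sphere component $P$ that bounds a ball $B$ in $M$. Two elementary observations set things up. First, a sphere is closed, so $P$ lies in the interior of $M$, and the interior of $B$ is an open subset of $M$ homeomorphic to an open $3$-ball and so contains no point of $\partial M$; hence $B\cap\partial M=\emptyset$, and in particular $B$ contains none of the $0$-beads, $2$-beads, or $\alpha$-sloped beads, each of which meets the distinguished torus $T$. Therefore any modification of the handle-and-bead structure supported in $B$ again yields an $\alpha$-sloped decomposition of $M$ along $T$, still with an $\alpha$-sloped bead. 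Second, $M=B\cup_P\overline{M\setminus B}$, the gluing taking place along the entire boundary sphere $P$ of $B$, so $B$ may be replaced by any $3$-ball bounded by $P$ without changing the homeomorphism type of $M$.

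Since $M$ has non-empty torus boundary it is not $S^3$, so $P$ cannot bound a ball on both sides; thus $B$ lies on exactly one side of $P$, and I must derive a contradiction. In the first case $B$ lies on the side of $F_j$ built through stages $1,\dots,j$, so that the $1$-handles of $\mathcal{H}^1_{j+1}$ are attached to $F_j\supseteq P$ after $B$ has been assembled; exactly as in Rule 1 of \cite{SchaThoTPf3M}, replace the entire handle-and-bead structure interior to $B$ by a single $0$-handle. Since a $3$-ball is traded for a $3$-ball the manifold is unchanged, the replacement only deletes handles from the decomposition so no $c(S_m)$ increases, and --- because $P$ survived the deletion of spheres bounding $0$- or $3$-handles in the definitions of $S_i$ and $F_i$ --- the original $B$ was more than a single $0$-handle, so it contained handles whose removal strictly decreases some $c(S_m)$. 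Hence the width strictly drops, contradicting thinness. In the second case $B$ lies on the side built from stage $j+1$ onward; this is dual, and by Rule 2 of \cite{SchaThoTPf3M} one replaces the interior of $B$ by a single $3$-handle, again strictly reducing the width. In either case we have contradicted thinness, so no sphere component of any $F_i$ bounds a ball, i.e.\ every such component is essential.

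The step I expect to be the main obstacle is the bookkeeping behind these replacements: one must check that excising the interior of $B$ and inserting a single $0$- (resp.\ $3$-) handle produces a legitimate $\alpha$-sloped generalized Heegaard splitting --- each intermediate boundary compression body must stay connected, the slope on $T$ must be unchanged (immediate from $B\cap\partial M=\emptyset$), and at least one $\alpha$-sloped bead must survive (also immediate) --- and that some $c(S_m)$ genuinely decreases rather than merely staying equal. The strict decrease rests on the remark in Section~\ref{sec:DefinitionsAndConstruction} that surface complexity falls under compression, together with the fact that $P$ being an honest component of $F_j$ forces $B$ to contain handles beyond a trivial $0$-handle; the legitimacy check is the $\alpha$-sloped analogue of the verification underlying Rules 1 and 2 of \cite{SchaThoTPf3M}.
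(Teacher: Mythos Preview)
Your proposal is correct and follows the same route as the paper: the paper itself gives no detailed argument for this lemma, but simply states that it follows ``following Rule 1 and Rule 2 of \cite{SchaThoTPf3M}'' applied in the $\alpha$-sloped setting, which is exactly what you have written out. Your additional observation that $B\cap\partial M=\emptyset$ (so no beads lie in $B$ and the resulting decomposition remains $\alpha$-sloped) is the one point genuinely new to the $\alpha$-sloped context, and you handle it correctly.
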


\begin{lemma}
\label{lemma:activecomp}
In a thin $\A$-sloped decomposition each component of $F_{i-1}$ either persists into $F_i$, or has handles from both $\mathcal{H}^1$ or $b^0$ and $\mathcal{H}_2$ or $b^2$.  

\end{lemma}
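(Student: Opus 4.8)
The plan is to argue by contradiction, parallel to the argument for Rule~2 in~\cite{SchaThoTPf3M}. Suppose $\{F_i,S_i\}$ is a thin $\alpha$-sloped decomposition and some component $P$ of $F_{i-1}$ violates the conclusion: in the passage $W_i=(F_{i-1}\times I)\cup b^0_i\cup\mathcal{H}^1_i\cup\mathcal{H}^2_i\cup b^2_i$ it meets handles or beads from only one of the two families $\mathcal{H}^1_i\cup b^0_i$ and $\mathcal{H}^2_i\cup b^2_i$, and it does not persist into $F_i$. Reversing the handle-and-bead decomposition top to bottom interchanges $\partial_+$ with $\partial_-$, $1$-handles with $2$-handles and $0$-beads with $2$-beads, fixes the distinguished slope $\alpha$ on $T$, and carries thin $\alpha$-sloped decompositions to thin ones, so it suffices to treat the case in which $P$ meets some members of $\mathcal{H}^1_i\cup b^0_i$ but none of $\mathcal{H}^2_i\cup b^2_i$. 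Let $\mathcal{G}$ be the component of $C_i^1$ containing $P\times I$, that is, $P$ together with all $1$-handles and $0$-beads of stage $i$ attached to it. Attaching each of these is a tubing --- onto $P$ itself, or onto the boundary-parallel annulus $\partial_+ b^0\subset T$ of a $0$-bead --- so by the complexity formula, equivalently by the observations accompanying Figures~\ref{figure:compressing} and~\ref{figure:complexitytube}, the corresponding component $P'=\partial_+\mathcal{G}$ of $S_i$ satisfies $c(P')>c(P)$; since no member of $\mathcal{H}^2_i\cup b^2_i$ meets $P'$, this $P'$ is also a component of $F_i$, which confirms both that $P$ fails to persist and that at least one handle or bead really is attached to it.

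The next step is to exhibit a boundary weak reduction of $S_i$ in the splitting $W_i=C_i^1\cup_{S_i}C_i^2$. The co-core of one of the $1$-handles of $\mathcal{H}^1_i$ incident to $\mathcal{G}$ is a compressing disk $D_1$ for $S_i=\partial_+C_i^1$ lying in $C_i^1$, with $\partial D_1\subset P'$. On the other side, because the decomposition is thin no $C_i^j$ is trivial: were some $C_i^j$ a product, the thin surface coinciding with the adjacent thick surface could be deleted and the neighbouring stages amalgamated, which removes one entry from the width multiset and leaves the others unchanged, hence strictly lowers the width. Therefore $\mathcal{H}^2_i\cup b^2_i\neq\emptyset$, and any one of its members contributes a compressing or $\partial_0$-compressing disk $D_2$ for $S_i$ in $C_i^2$ whose boundary lies in $S_i\setminus P'$, hence disjoint from $\partial D_1$. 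Thus $(D_1,D_2)$ is a boundary weak reduction of $S_i$; equivalently, the handles and beads of $\mathcal{G}$ may be slid past $\mathcal{H}^2_i\cup b^2_i$ in the handle-and-bead decomposition of $W_i$.

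Finally I would apply to this boundary weak reduction the thinning (untelescoping) move, the analogue for boundary compression bodies of Rule~1 of~\cite{SchaThoTPf3M}: compressing $S_i$ along the two disjoint families of disks splits $S_i$ into two thick surfaces $S'$ and $S''$, each obtained from $S_i$ by a nonempty compression or $\partial_0$-compression and hence of strictly smaller complexity, with a new thin surface between them. An innermost-disk argument, using that boundary compression bodies are irreducible exactly as in Lemma~\ref{lemma:negboundaryincomp} and the inequality of Lemma~\ref{lemma:boundary+lessboundary}, shows that the intermediate pieces are honest boundary compression bodies and that the result is a genuine $\alpha$-sloped decomposition of $M$; it replaces the entry $c(S_i)$ of the width multiset by two strictly smaller entries, so the width drops lexicographically, contradicting thinness. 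The step I expect to be the main obstacle is the bead bookkeeping inside this last move: when $\mathcal{G}$ contains $0$-beads the relevant disk is a $\partial_0$-compressing disk meeting $T$, and one must check that the boundary-parallel annuli bounding the various beads and $2$-beads are deleted consistently as the new thick and thin surfaces are read off, so that the complexity strictly decreases in every bead configuration rather than merely failing to increase.
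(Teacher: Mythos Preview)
Your argument is correct. The paper itself supplies no proof, only the remark that the lemma ``follows Rule~1 and Rule~2 of \cite{SchaThoTPf3M}'', so your write-up is a legitimate way to make that citation precise. One comment on efficiency: the handle-sliding observation you make in passing --- ``the handles and beads of $\mathcal{G}$ may be slid past $\mathcal{H}^2_i\cup b^2_i$'' --- already finishes the argument without the single-pair untelescoping of your final paragraph. Since the component of $C_i^2$ lying over $P'$ is the product $P'\times I$, one simply transfers \emph{all} of $\mathcal{G}$'s $1$-handles and $0$-beads to the start of stage $i+1$ (which exists, since the lemma concerns both $F_{i-1}$ and $F_i$, forcing $2\le i\le k-1$). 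The new $S_i$ has the component $P'$ replaced by $\partial_-\mathcal{G}$, so $c(S_i)$ strictly drops, while $S_{i+1}$ is rebuilt identically and every other thick surface is untouched. This direct reordering is what Scharlemann--Thompson's Rule~2 does, and it sidesteps both the non-triviality check for $C_i^2$ and the bead bookkeeping you flag as the main obstacle; your untelescoping route reaches the same contradiction but creates an extra stage and partially duplicates the later proof of Theorem~\ref{theorem:SiAreStronglyIrred}.
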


$\alpha$-stabilized surfaces can be simplified to obtain lower complexity decomposing surfaces, removing a bead entirely  from a decomposition, and so we conclude:
\begin{lemma}
\label{lemma:dishonestthinisstablized}
If $thin(M, \alpha)$ is $\alpha$-stabilized then it is obtained from $thin(M, \emptyset)$ via a single $\alpha$ stabilization.

\end{lemma}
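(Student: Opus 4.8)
The plan is to show that if $thin(M,\alpha)$ is $\alpha$-stabilized, then it contains a boundary-parallel annulus bounding a bead that can be stripped away, and what remains is a \emph{closed} generalized Heegaard splitting for $M$; the width bookkeeping then forces that closed decomposition to be $thin(M,\emptyset)$ and forces the original $\alpha$-sloped decomposition to have exactly one thick surface more than a single $\alpha$-stabilization would produce. First I would recall that in $thin(M,\alpha)$ there is a distinguished boundary component $T$ with slope $\alpha$, so some $\alpha$-sloped bead appears in the handle-and-bead decomposition $M = \mathcal{H}^0 \cup b^0 \cup \mathcal{H}^1 \cup \mathcal{H}^2 \cup b^2 \cup \mathcal{H}^3$. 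By hypothesis the decomposition is $\alpha$-stabilized; the precise meaning I would extract from Lemma~\ref{lemma:alphastabcomptoclosed} applied to each piece $W_i = C_i^1 \cup_{S_i} C_i^2$: being $\alpha$-stabilized means some thick surface $S_j$ (the one carrying the $\alpha$-sloped bead data) admits a compressing disk $D$ such that $S_j | D$ is the disjoint union of a surface which is a Heegaard surface for its piece — either closed or $\alpha$-sloped — and an $\alpha$-sloped boundary-parallel annulus $A$.

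The key steps, in order: (1) Use Lemma~\ref{lemma:thinclosed} together with the structure of a thin decomposition to argue that the $\alpha$-stabilizing annulus $A$, being boundary-parallel into $T$, bounds a bead $b$ that is incident to exactly one collection of 1-handles and one collection of 2-handles in the decomposition; by Lemma~\ref{lemma:activecomp} (each component is "active"), the only way a thin decomposition can contain such a separable bead is if it sits adjacent to the very end of the 1-handle/2-handle alternation — otherwise one could reorder and strip $b$ to strictly decrease the width, contradicting thinness. (2) Strip the bead $b$ and the dual $1$-handle $\gamma$ (the unknotted arc from Definition~\ref{definition:alphastab}) away: by Lemma~\ref{lemma:alphastabcomptoclosed} run in reverse, this replaces $thin(M,\alpha)$ with a closed generalized Heegaard splitting $\{F_k', S_k'\}$ of $M$ whose thick surfaces are the $S_i$ with the single compression along $D$ performed (so $c(S_j')=c(S_j)-1$ on that piece, or the piece is absorbed, and all other $c(S_i)$ are unchanged). (3) Compare widths: $w(M,\emptyset) \le w(M,\{F_k',S_k'\})$, while the width of $thin(M,\alpha)$ is obtained from the closed width's multiset by adding back exactly one $\alpha$-stabilization's worth of complexity; since an $\alpha$-stabilization preserves the genus of the thick surface (the remark after Definition~\ref{definition:alphastab}) and adds only a boundary-parallel annulus which is deleted in forming thick/thin surfaces, the thick-surface multiset is in fact \emph{unchanged} by a single $\alpha$-stabilization, so $w(M,\alpha) \le w(M, \text{(single }\alpha\text{-stab of }thin(M,\emptyset)))$. (4) Conversely, minimality of $w(M,\alpha)$ among $\alpha$-sloped decompositions, combined with the fact that the stripped decomposition $\{F_k',S_k'\}$ must realize $w(M,\emptyset)$ (else re-$\alpha$-stabilizing it would beat $thin(M,\alpha)$), pins down $\{F_k',S_k'\} = thin(M,\emptyset)$ up to isotopy, and hence $thin(M,\alpha)$ is a single $\alpha$-stabilization of $thin(M,\emptyset)$.

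The main obstacle I expect is step (1): showing that in a \emph{thin} $\alpha$-sloped decomposition, an $\alpha$-stabilizing (hence boundary-parallel) annulus cannot occur "in the middle" of the decomposition but only as the outermost modification — equivalently, that one cannot have a thin decomposition which is $\alpha$-stabilized in a way that interleaves nontrivially with other handle collections. This requires an untelescoping/rearrangement argument in the spirit of Scharlemann–Thompson: if the bead $b$ and its dual handle $\gamma$ sit between two genuine thinning levels, then the weak reducibility coming from the pair $(D_1, D_2)$ noted after the boundary-weak-reducibility definition (the $\partial_0$-compressing disk for $b$ and the disk $A_\alpha - n(\gamma)$, which meet once) can be pushed past the adjacent handle collection to strictly lower the width, contradicting $thin(M,\alpha)$. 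I would handle this by an innermost-disk and edge-slide analysis analogous to Lemma~\ref{lemma:boundaryplusboundarycomp}, showing the co-core of $b$ can be isotoped to lie in a single piece $W_i$ at the boundary of the alternation, after which the reverse of Lemma~\ref{lemma:alphastabcomptoclosed} applies cleanly. The remaining steps (2)–(4) are then essentially width arithmetic using Lemma~\ref{lemma:boundary+lessboundary} and the definitions of width and of $thin$.
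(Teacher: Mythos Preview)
The paper gives essentially no proof of this lemma: it is asserted immediately after the sentence ``$\alpha$-stabilized surfaces can be simplified to obtain lower complexity decomposing surfaces, removing a bead entirely from a decomposition, and so we conclude,'' with no further argument. Your outline is therefore far more detailed than what the paper offers, and the overall strategy --- destabilize to a closed decomposition and run a width sandwich to force that closed decomposition to be $thin(M,\emptyset)$ --- is the right one.

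That said, step~(3) contains a concrete error that breaks the argument as written. You claim the thick-surface multiset is \emph{unchanged} by a single $\alpha$-stabilization, reasoning that the genus is preserved and the added boundary-parallel annulus is deleted when forming the thick surface. This is false: after $\alpha$-stabilization the new thick surface is \emph{connected} --- the $0$-bead is attached to the old surface by the $1$-handle with core $\gamma$ --- so there is no separate annulus component to delete. If $S$ is closed of genus $g$ then the stabilized surface is a twice-punctured genus-$g$ surface, and with $c(S)=1-\chi(S)+g(S)$ one gets $c(S')=3g+1=c(S)+2$; this is exactly the ``$+_1\,2$'' appearing in the proof of Theorem~\ref{theorem:ComplexityBounds}. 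The same slip recurs in step~(2), where the drop on destabilizing is $2$, not $1$. Once the arithmetic is fixed, the chain $w(M,\alpha)\le w(M,\emptyset)+_1 2\le w(\mathcal D)+_1 2=w(M,\alpha)$ (with $\mathcal D$ the destabilized closed decomposition) does force $w(\mathcal D)=w(M,\emptyset)$, provided you verify that ``add $2$ to the entry for the piece meeting $T$'' is monotone for the lexicographic ordering on multisets. Your step~(1) is also more elaborate than needed: by Lemma~\ref{lemma:alphastabcomptoclosed} a single destabilization yields a Heegaard surface that is either closed or still $\alpha$-sloped; in the latter case the whole decomposition is strictly thinner and still $\alpha$-sloped, contradicting minimality, so the result is closed and (via Lemma~\ref{lemma:boundary+lessboundary} and Lemma~\ref{lemma:thinclosed}) the remaining surfaces were already closed --- this gives ``single'' without any rearrangement argument.
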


\begin{theorem}
\label{theorem:SiAreStronglyIrred}

In a thin $\alpha$-sloped decomposition each $S_i$ is either
\begin{enumerate}
 \item a strongly  boundary irreducible $\alpha$-Heegaard splittings of $W_i=C_i^1 \cup C_i^2$ or 
 \item $\alpha$-stabilized.   
 
In the second case $S_1$ is an $\A$-stabilized Heegaard surface for $W_1$ and all other $S_j$ are strongly irreducible closed Heegaard surfaces for $W_j$.

\end{enumerate}
\begin{proof}

Let $S_i$ be a thick surface in a thin $\alpha$-sloped decomposition of $M$ along $T$.  Suppose there was a compressing or $\partial_0$-compressing disk $D_1\subset C_1$ which was disjoint from a compressing or $\partial_0$-compressing disk $D_2\subset C_2$.  

If $D_1$ and $D_2$ are both $\partial_0$-compressing disks then by Lemma \ref{lemma:boundaryplusboundarycomp} they are the co-cores of the 0-bead $b_1$ and the 2-bead $b_2$ respectively.  Let $\partial(\partial_0 b_2)=c_1 \cup c_2$. If $\partial(\partial_0 b_1) \neq \partial(\partial_0 b_2)$ then after possibly renaming, $c_1= [\partial(\partial_0 b_1) \cap \partial(\partial_0 b_2)]$ and $c_2$ is another boundary component of $S_i$.  



  Let $S_i^j$, $j=1,2$,  be the result of compressing $S_i$ into $C_j$ along $D_j$.      On the side not containing $D_j$, $S_i^j$ bounds a boundary compression $C'_j=C_j-b_j$.  The curve $c_1$ is not a boundary component of $S_i^1$, but $c_2$ is.    Since $b_1 \cup b_2$ is the neighborhood of some annulus $A$ of $T \setminus (\partial S'_i \cup \partial F_2)$ is is possible to isotope $S_i^1$ by pulling $c_2$ across $A$ to $S_i^2$.  Now $S'_i=S_i^1=S_i^2$ is a new Heegaard surface for $W_i$ with $\{c(S'_i)\}<\{c(S'_i)\}$, as so is a thinner decomposition, contradicting our assumption.

If $\partial(\partial_0 b_1)=\partial(\partial_0 b_2)$ then $b_1 \cup b_2= n(T)$ and $\partial S_i=c_1 \cup c_2$.  As above let $S_i^1$ be the closed surface which is the result of compressing $S_i$ into $C_1$ along $D_1$.  Note that the genus of $S_i^1$ is the same as the genus of $S_i$ and it is closed.   On the side not containing $D_1$, $S_i^1$ bounds a compression $C'_1=C_j-b_j$.  The  distinguished boundary component $T$ of $M$ is contained in the boundary of the submanifold $C'_2=C_2 \cup b_1$ of $M$ on the side of $S_i^1$ containing $D_j $.

Let $\beta_j$, $j=1,2$  be the arc of $D_j$ contained in $S_i$. Then $\beta_1 \cup \beta_2$ is a simple closed curve in $S_i$.    It is possible to isotope $C'_1$ so that it is incident to $b_1$ via $\partial_+ b_1 - n(\beta_1)$.  Thus $C_2'=C_2 \cup_{\beta_1}b_1$ with the boundary components of $C_2$ and $b_1$ corresponding to $c_1$ and $c_2$ identified correspondingly.  Now $C'_2$ is a compression body and $C'_1 \cup_{S_i^1} C'_2$ is a closed Heegaard splitting of $W_i$ of the same genus as $S_i$.  Note that it is also possible to obtain $S^1_i$ by compressing $S_i$ along the separating disk for $b_1$, and so by lemma \ref{lemma:alphastabcomptoclosed} $S_i$ was alpha stabilized.  

If $\partial_0 b_1 \cap \partial_0 b_2 = \emptyset$ or either $D_1$ or $D_2$ is a compressing disk, remove a neighborhood $n(D_1)$ from $C_1$ converting it into a boundary compression body $C_1'$ with either one fewer 0-bead or one fewer 1-handle.  

If $D_2$ is a compressing disk, attach a 2-handle with core $D_2$  to $C_i$. If it is a $\partial_0$-compressing disk, attach a 2-bead with co-core $D_2$ to $C_1'$, then attach a 0-bead or 1-handle which is dual to $D_1$, followed by the rest of the two handles and 2-beads of $C_2$.   Now we have replaced $S_i$ with a new decomposition of $W_i$.  The width of the original decomposition was $\{c(S_i)\}$, and the width of the  new decomposition is $\{c(S_i^1),c(S_i^2)\}$, where $S_i^1$ is the result of (boundary) compressing $S_i$ along $D_1$, and $S_i^2$ is the result of (boundary) compressing $S_i$ along $D_2$, and so $c(S_i^1),c(S_i^2) < c(S_i)$ and the new decomposition is thinner.

If the new decomposition has $\alpha$-sloped surfaces then we have contradicted our assumption that we began with a thin $\alpha$-sloped decomposition.  Thus, the resulting surfaces must be closed.  In this case, $thin(M, \alpha)$ must be $\A$-stabilized and by Lemma \ref{lemma:dishonestthinisstablized},  $S_1$ an $\A$-stabilized Heegaard surface for $W_1$.  
\end{proof}

\end{theorem}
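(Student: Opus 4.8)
The plan is to run a Scharlemann--Thompson style thinning argument: assume some thick surface $S_i$ fails to be a strongly boundary irreducible $\alpha$-Heegaard splitting of $W_i = C_i^1 \cup_{S_i} C_i^2$, and then force $S_i$ to be $\alpha$-stabilized with the whole decomposition pushed into the special form described in (2). By the definition of boundary weak reducibility, the failure of strong boundary irreducibility hands us disjoint disks $D_1 \subset C_i^1$ and $D_2 \subset C_i^2$, each of which is either a compressing disk or a $\partial_0$-compressing disk for its side. The argument then splits according to the types of $D_1$ and $D_2$, and in each branch I either produce a strictly thinner decomposition (contradicting thinness) or recognize $S_i$ as $\alpha$-stabilized.

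First, suppose both $D_1$ and $D_2$ are $\partial_0$-compressing disks. By Lemma~\ref{lemma:boundaryplusboundarycomp}, $D_1$ is isotopic to the co-core of a $0$-bead $b_1$ and $D_2$ to the co-core of a $2$-bead $b_2$, where $\partial_0 b_1, \partial_0 b_2$ are annuli in $T$. If $\partial(\partial_0 b_1) \neq \partial(\partial_0 b_2)$, I would compress $S_i$ on both sides, keep track of which boundary curves of $S_i$ survive each compression, and observe that the two resulting surfaces become isotopic after sliding the surplus boundary curve across the annulus of $T$ lying between the two beads; this yields a single new Heegaard surface for $W_i$ of strictly smaller complexity, contradicting thinness. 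If instead $\partial(\partial_0 b_1) = \partial(\partial_0 b_2)$, then $b_1 \cup b_2$ is a neighborhood of $T$, compressing $S_i$ along $D_1$ produces a closed Heegaard surface of the same genus, and, after arranging the surgery disk to coincide with the intersection of the closed surface with the boundary-parallel annulus, Lemma~\ref{lemma:alphastabcomptoclosed} identifies $S_i$ as $\alpha$-stabilized.

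In the remaining case --- the bead regions are disjoint, or at least one of $D_1, D_2$ is a genuine compressing disk --- I would untelescope: delete a neighborhood of $D_1$ from $C_i^1$ (dropping either a $0$-bead or a $1$-handle), then rebuild by first attaching the $2$-handle or $2$-bead dual to $D_2$ and only afterwards attaching the handle dual to $D_1$ followed by the remaining $2$-handles and $2$-beads of $C_i^2$. This replaces the single thick surface $S_i$ by two thick surfaces $S_i^1, S_i^2$, each obtained from $S_i$ by a compression or $\partial_0$-compression, hence each of complexity strictly less than $c(S_i)$, so the new decomposition is strictly thinner in the lexicographic multiset order. If any surface of the new decomposition still carries slope $\alpha$, this contradicts thinness of the original $\alpha$-sloped decomposition. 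Otherwise every new decomposing surface is closed; then Lemma~\ref{lemma:thinclosed} forces the whole decomposition to be closed, so $thin(M,\alpha)$ is $\alpha$-stabilized, and Lemma~\ref{lemma:dishonestthinisstablized} says it arises from $thin(M,\emptyset)$ by a single $\alpha$-stabilization. Since an $\alpha$-stabilization only alters the first compression body --- adding one $0$-bead and one $1$-handle on the $C_1$ side and one $2$-handle and one $2$-bead on the $C_2$ side --- $S_1$ becomes the $\alpha$-stabilized surface while every other $S_j$ remains the corresponding closed thick surface of $thin(M,\emptyset)$, which is a strongly irreducible Heegaard surface for $W_j$ by the closed-surface version of the thinning argument (cf.\ \cite{SchaThoTPf3M}).

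The main obstacle I expect is the bookkeeping in the ``both $\partial_0$-compressing'' branch: correctly tracking which boundary components of $S_i$ survive each compression, checking that the relevant annulus of $T \setminus (\partial S_i \cup \partial F)$ genuinely supports the isotopy that identifies the two compressed surfaces, and, in the equal-boundary subcase, arranging that the surgery disk coincides with the intersection of the closed surface and the boundary-parallel annulus so that Lemma~\ref{lemma:alphastabcomptoclosed} applies verbatim. A secondary point requiring care is legitimacy of the reordering in the untelescoping branch --- that $D_1$ and the $2$-handle or $2$-bead dual to $D_2$ are disjoint, so their order of attachment may be swapped --- and confirming that the rebuilt object is an honest $\alpha$-sloped decomposition precisely when it still contains an $\alpha$-bead, so that the dichotomy ``still $\alpha$-sloped'' versus ``now closed'' is exhaustive.
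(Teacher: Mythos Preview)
Your proposal is correct and follows essentially the same route as the paper's proof: the same case split on the types of $D_1,D_2$, the same two subcases when both are $\partial_0$-compressing disks (sharing one versus two boundary curves on $T$), the same untelescoping move in the remaining case, and the same closing dichotomy invoking Lemma~\ref{lemma:dishonestthinisstablized}. Your write-up is in places more explicit than the paper's (e.g.\ spelling out why $S_1$ carries the $\alpha$-stabilization while the other $S_j$ stay closed and strongly irreducible), and your anticipated obstacles are exactly the bookkeeping points the paper handles; the invocation of Lemma~\ref{lemma:thinclosed} is not actually needed, since once every surface of the new decomposition is closed there is nothing further to force.
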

The following lemma is a generalization of Haken's Lemma and Lemma 1.1 of \cite{CasGorRHS} and \cite{BonOtaSdHdEL}.  
\begin{lemma}
\label{lemma:Haken'sLemma}

Let $W=C_1 \cup_S C_2$ be a boundary Heegaard splitting of $W$.  Let $\mathcal{S}$ be a disjoint union of properly embedded essential 2-spheres, and compressing- and $\partial_0$-compressing disks for $\partial_- C$.  Then there exists a disjoint union of essential 2-spheres and disks $\mathcal{S}^*$ in $W$ such that:

\begin{enumerate}

\item $\mathcal{S}^*$ is obtained from $\mathcal{S}$ by ambient 1-surgery and isotopy
\item each component of $\mathcal{S}^*$ meets $S$ in a single circle
\item there exist  cut systems $\mathcal{D}_1$, $\mathcal{D}_2$ for $C_1$, $C_2$ respectively such that $\mathcal{D}_1 \cap \mathcal{S}^*=\mathcal{D}_1 \cap \mathcal{S}^*=\emptyset$.  
\end{enumerate}

\end{lemma}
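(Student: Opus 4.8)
The plan is to adapt the standard proof of Haken's Lemma (in the form of Casson–Gordon / Bonahon–Otal) to the boundary-compression-body setting, where the new feature is that $\mathcal{S}$ now contains $\partial_0$-compressing disks as well as compressing disks and $2$-spheres, and the cut systems $\mathcal{D}_i$ decompose $C_i$ into $(\partial_- C_i \times I)\cup b^0_i$ rather than into balls. First I would put $\mathcal{S}$ in general position with respect to $S$ and with respect to a chosen pair of cut systems $\mathcal{D}_1,\mathcal{D}_2$, and argue as follows. Each component of $\mathcal{S}\cap C_j$ that is a disk or sphere can, after an innermost-disk/outermost-arc cleanup against $\mathcal{D}_j$, be isotoped off $\mathcal{D}_j$: since the $C_j|\mathcal{D}_j$ pieces are products $\partial_-\times I$ together with beads, any closed curve or arc of $\mathcal{S}\cap\mathcal{D}_j$ that is innermost/outermost on a component of $\mathcal{S}$ can be removed by an ambient $1$-surgery on $\mathcal{S}$ or an isotopy — this is exactly where irreducibility of boundary compression bodies (noted in the excerpt) and Lemma~\ref{lemma:boundaryplusboundarycomp}, which pins down $\partial_0$-compressing disks as co-cores of beads, get used to guarantee that no essential curve or spanning arc obstructs the surgery.

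The core induction is on $|\mathcal{S}\cap S|$, or more precisely on the number of components of $\mathcal{S}\cap S$ together with a lexicographic weighting by complexity. If some component of $\mathcal{S}$ meets $S$ in more than one circle, look at a disk component $E$ of $\mathcal{S}\cap C_j$ whose boundary is innermost on $S$ among all circles of $\mathcal{S}\cap S$. Because $S$ is a boundary Heegaard surface, $\partial E$ either bounds a disk in $S$, or is isotopic in $C_j$ across a subdisk of the cut system, or (for the $\partial_0$-type pieces) cobounds with a spanning arc of $\partial_0 C$. In each case one performs an ambient $1$-surgery on $\mathcal{S}$ along a subdisk of $E$ — i.e.\ tubes or compresses $\mathcal{S}$ — strictly reducing $|\mathcal{S}\cap S|$ while keeping each new component essential (here one uses Lemma~\ref{lemma:thinsphereessential}-style reasoning: an inessential sphere produced by the surgery can be discarded, and an inessential disk likewise). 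Iterating yields $\mathcal{S}^*$ with each component meeting $S$ in a single circle, which is conclusion (2), and conclusion (1) is automatic since every move was an ambient $1$-surgery or isotopy.

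Finally, once each component of $\mathcal{S}^*$ meets $S$ in a single circle, each $\mathcal{S}^*\cap C_j$ is a disjoint union of disks (and at worst parallel copies), and a further innermost/outermost argument against generic cut systems lets me isotope $\mathcal{D}_1$ and $\mathcal{D}_2$ off $\mathcal{S}^*$, giving conclusion (3); concretely, one builds $\mathcal{D}_j$ by starting from any cut system and surgering it away from $\mathcal{S}^*$ along the product structure of $C_j|\mathcal{S}^*$. The main obstacle I anticipate is the bookkeeping around the beads: unlike the classical case, where $C_j$ cut along a cut system is a collection of balls, here we get products and solid-torus beads, so I must check that an innermost curve or outermost arc of intersection with a cut-system disk can always be removed — for spanning arcs of $\partial_0 C$ this relies on Lemma~\ref{lemma:boundaryplusboundarycomp} and on $\partial_0 C$ being a union of annuli — and that the surgeries do not create new non-separating or boundary-linking components that block the induction. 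Verifying that this cleanup terminates (a suitable complexity does strictly decrease at every step, even when a single ambient surgery trades one kind of intersection for another) is the delicate point, and I would handle it with the same double-lexicographic complexity used by Scharlemann–Thompson in \cite{SchaThoTPf3M}.
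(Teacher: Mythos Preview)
Your overall framework is sound, but you have misidentified the main difficulty. The paper's proof opens with the observation (from Lemma~\ref{lemma:negboundaryincomp}) that $\partial_-C$ is $\partial_0$-incompressible, so $\mathcal{S}$ in fact contains \emph{no} $\partial_0$-compressing disks --- the ``new feature'' you single out is vacuous, and all the bead bookkeeping you anticipate around spanning arcs of $\partial_0 C$ never arises. Once this is noted, the statement reduces almost verbatim to the classical Haken--Casson--Gordon lemma: the paper takes a component $D$ of $\mathcal{S}$ minimizing $|D\cap S|$ in its isotopy class, rules out inessential intersection curves by irreducibility of $C_i$, and then cites Jaco's hierarchy argument to show that any non-disk piece of $D\cap C_i$ can be boundary-compressed across $S$, contradicting minimality. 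Your induction on $|\mathcal{S}\cap S|$ via ambient $1$-surgery is a legitimate route to the same conclusion, just more elaborate than necessary here.

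Two misfired citations: Lemma~\ref{lemma:boundaryplusboundarycomp} is about $\partial_0$-compressions of $\partial_+C$, not $\partial_-C$, so it does not bear on the disks in $\mathcal{S}$; and Lemma~\ref{lemma:thinsphereessential} is a statement about thin $\alpha$-sloped decompositions, not about a single boundary Heegaard splitting, so it has no role in this argument. Your construction of the cut systems $\mathcal{D}_i$ at the end --- cutting $C_i$ along the disks of $\mathcal{S}^*\cap C_i$, taking cut systems of the resulting boundary compression bodies, and adjoining parallel copies of the cutting disks --- is exactly what the paper does.
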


\begin{proof}

Our previous observations show that it is not possible for $\partial_-C$ to have a $\partial_0$-compressing disk, so $\mathcal{S}$ contains essential spheres and compressing disks for $\partial_-C$ .  

Let $D$ be an essential 2-sphere, or a compressing disk element of $\mathcal{S}$, which has the fewest number of intersections with $S$ of all such spheres or disks in its isotopy class.    Curves of intersection between $D$ and $S$ which are inessential in $S$ can be ruled out by the fact the $C_i$ is irreducible and that $D \cap S$ is minimized.  At least one component of $D|S \cap \partial_- C_i= \emptyset$, $i=1,2$.  Call this component $D^*\subset C_1$, say.  If $D^*$ is a disk, then $D \cup S$ is a single essential curve on $S$.

If $D^*$ contains a component which is not a disk, using a hierarchy of surfaces and a series of boundary compressions, push strips of $D^*$ across $S$ into $C_2$ until $D^*\cup C_1$ is a collection of disks, see Jaco's \cite{JacLo3MT} account of the proof of Haken's lemma for more details.  Call the image of $D^*$ after this series of isotopies $D'$.  By lemma 11.9 of \cite{JacLo3MT} the number of intersections between $D'$ and $S$ is fewer than the number of intersections between $D$ and $S$, a contradiction.  Thus $D^*$ is a disk and $D' \cap S$ is a single circle.

To find $\mathcal{D}_i$ cut $C_i$ along the collections $\{\mathcal{S}_i\} $ of disk components of  $\mathcal{S}^* \cap C_i$, obtaining a collection of boundary compression bodies $\{C_i^j\}$, $j=1,2,..,n$.  Let $\{\mathcal{S^*}_i\}$ be a collection of parallel copies of the disks $\{\mathcal{S}_i\} $ and let $\mathcal{D}_i^j$ be a cut system for this collection of boundary compression bodies $\{C_i^j\}$.     Then $[\cup_j\mathcal{D}_i^j ]\cup \{\mathcal{S^*}_i\}$ contains a cut system which is disjoint from $\mathcal{S}^*$.  Call this system $\mathcal{D}_i$.  
\end{proof}
 
\begin{lemma}
If $\partial_- W_i = F_{i-1} \cup F_i$ is compressible or $\partial_0$-compressible in $W_i$ then $S_i$ is a weakly reducible Heegaard surface for $W_i$.  

\begin{proof}

As above, it is impossible for $F_i$ to be $\partial_0$-compressible.  Let $D$ be a compressing disk for $F_i$, say.  By Lemma \ref{lemma:Haken'sLemma} $D$ is isotopic to a disk which intersects $S_i$ in a single essential circle $c$, and is disjoint from cut systems $\mathcal{D}_1$, $\mathcal{D}_2$ for $C_1$, $C_2$ respectively.  The curve $c$ cuts $D$ into an annulus $A\subset C_1$, and a compressing disk $D'$ for $S_i$.  Since $D' \cap \mathcal{D}_2 = \emptyset$, $S_i$ is a weakly reducible Heegaard surface.  
\end{proof}

\end{lemma}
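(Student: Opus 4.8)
The plan is to deduce this directly from the generalized Haken Lemma (Lemma~\ref{lemma:Haken'sLemma}). First I would dispose of the $\partial_0$-compressible case: exactly as in the previous lemma (and ultimately by Lemma~\ref{lemma:negboundaryincomp}), $\partial_-W_i=F_{i-1}\cup F_i$ cannot be $\partial_0$-compressible in $W_i$, since in $W_i=C_1\cup_{S_i}C_2$ each of $F_{i-1},F_i$ is the negative boundary of one of the two boundary compression bodies, and such a surface is $\partial_0$-incompressible there. So it suffices to treat the case in which some component of $\partial_-W_i$ has a genuine compressing disk $D$; by symmetry of the two cases, say $D$ compresses a component of $F_i$, and set up notation so that $F_i\subset\partial_-C_2$. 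In particular $D$ is a compressing disk for $\partial_-C_2$, so $\mathcal{S}=D$ is a legitimate input to Lemma~\ref{lemma:Haken'sLemma}.

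Next I would apply Lemma~\ref{lemma:Haken'sLemma} to $\mathcal{S}=D$, obtaining $\mathcal{S}^{*}=D^{*}$ together with cut systems $\mathcal{D}_1,\mathcal{D}_2$ for $C_1,C_2$ with $D^{*}\cap(\mathcal{D}_1\cup\mathcal{D}_2)=\emptyset$. Since $D$ is a single disk, the ``ambient $1$-surgery and isotopy'' of item (1) is just an isotopy, so $D^{*}$ is again a compressing disk with $\partial D^{*}=\partial D\subset F_i$. Because $\partial D^{*}\subset F_i=\partial_-C_2$ is disjoint from $C_1$, the disk $D^{*}$ cannot be pushed off $S_i$: if it were disjoint from $S_i$ it would lie in $C_2$ and be a compressing disk for $\partial_-C_2$, contradicting Lemma~\ref{lemma:negboundaryincomp}. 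Hence $D^{*}$ meets $S_i$, and by item (2) it meets $S_i$ in a single circle $c$, which is essential in $S_i$ (inessential curves of intersection having been removed in the proof of Lemma~\ref{lemma:Haken'sLemma}, using irreducibility). As $D^{*}$ is a disk and $c$ is a circle in its interior, $c$ cuts $D^{*}$ into a subdisk $D'$ with $\partial D'=c$ and an annulus $A$ with $\partial A=c\cup\partial D^{*}$; since $\partial D^{*}\subset\partial_-C_2$, the annulus $A$ lies in $C_2$ and the disk $D'$ lies in $C_1$. Thus $D'$ is an honest compressing disk for $S_i$ contained in $C_1$.

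To finish I would exhibit a disjoint compressing disk on the other side. Because $D'\subset D^{*}$ and $D^{*}\cap\mathcal{D}_2=\emptyset$, the disk $D'$ is disjoint from $\mathcal{D}_2$. The cut system $\mathcal{D}_2$ is nonempty (otherwise $C_2$ is a trivial boundary compression body, $S_i$ is isotopic to $F_i$, and a short direct check shows $\partial_-W_i$ is then incompressible in $W_i$, so the hypothesis is vacuous; symmetrically if $D$ compressed $F_{i-1}$ instead). Pick any $D_2\in\mathcal{D}_2$: it is a compressing disk for $\partial_+C_2=S_i$ in $C_2$, and $D_2\cap D'=\emptyset$. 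The pair $(D',D_2)$ therefore exhibits $S_i$ as a weakly reducible Heegaard surface for $W_i$.

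The main obstacle is not a single deep step but the careful bookkeeping around Lemma~\ref{lemma:Haken'sLemma}: confirming that the resulting intersection circle $c$ is essential in $S_i$ (so that $D'$ is a genuine compressing disk, not a trivial one) and correctly identifying which component of $D^{*}\setminus c$ lies in which boundary compression body, so that $D'$ and the chosen cut-system disk genuinely sit on opposite sides of $S_i$. Once these are pinned down, the conclusion is just the definition of weak reducibility.
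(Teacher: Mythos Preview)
Your proposal is correct and follows essentially the same route as the paper: dismiss the $\partial_0$-case via Lemma~\ref{lemma:negboundaryincomp}, apply the generalized Haken Lemma to a compressing disk for $F_i$ to get a single essential intersection circle $c$ with $S_i$, and pair the resulting subdisk on one side with a disk from the cut system on the other. Your write-up is in fact more careful than the paper's---you justify that $c$ is essential, that $D^*$ cannot miss $S_i$, and that $\mathcal{D}_2\neq\emptyset$---and you place the pieces on the correct sides (with $\partial D\subset F_i=\partial_-C_2$, the annulus lies in $C_2$ and the subdisk $D'$ in $C_1$; the paper's ``$A\subset C_1$'' is a typo).
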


We see from \cite{SchaThoTPf3M} Rule 5 and observations in Lemma \ref{lemma:negboundaryincomp}: 

\begin{theorem}
\label{theorem:ThinInc}
 In a thin $\A$-sloped decomposition, each component of each $F_i$ is incompressible and boundary incompressible in $M$.

\end{theorem}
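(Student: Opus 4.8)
The plan is to run the Scharlemann--Thompson ``Rule 5'' argument of \cite{SchaThoTPf3M} through the dictionary ``compression body $\rightsquigarrow$ boundary compression body'' and ``compression $\rightsquigarrow$ compression or $\partial_0$-compression'', using the layered picture $M = W_1 \cup_{F_1} W_2 \cup_{F_2} \cdots \cup_{F_{n-1}} W_n$ with $W_j = C_j^1 \cup_{S_j} C_j^2$ and $\partial_- C_{j}^{2} = \partial_- C_{j+1}^{1}$ a union of components of $F_j$. Suppose, toward a contradiction, that some component of some thin surface $F_i$ has a compressing disk or a $\partial_0$-compressing disk $D$ in $M$. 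Every surface in the decomposition separates, so $F_i$ splits $M$ into $W_1\cup\cdots\cup W_i$ and $M^+ = W_{i+1}\cup\cdots\cup W_n$; after a small isotopy we may take $\partial D\subset F_i$ with the interior of $D$ on one side, say inside $M^+$.

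The first step is to push $D$ into a single layer. Isotope $D$ to minimize $\lvert D\cap(F_{i+1}\cup\cdots\cup F_{n-1})\rvert$. A standard innermost-circle argument, using that the layers $W_j$ are irreducible (boundary compression bodies are irreducible), removes every intersection circle that is inessential in the thin surface it lies on; so each remaining circle is essential in some $F_\ell$ with $\ell\geq i+1$. If no such circles remain, then $D$ is connected and disjoint from $F_{i+1}\cup\cdots$, hence $D\subset W_{i+1}$, and $F_i=\partial_- W_{i+1}$ is compressible or $\partial_0$-compressible in $W_{i+1}$. Otherwise an innermost subdisk $D'\subset D$ is cut off by some $F_\ell$ and lies in a single layer $W_\ell$ or $W_{\ell+1}$, with $\partial D'$ essential in $F_\ell$; since $F_\ell$ is a $\partial_-$ boundary component of that layer, $D'$ exhibits $\partial_- W_j$ ($j=\ell$ or $\ell+1$) as compressible in $W_j$. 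In the $\partial_0$-compressing case the boundary arc $\beta=\partial D\cap T$ rides along the annuli $\partial_0 W_j\subset T$, and the same bookkeeping is carried out for outermost subdisks of $D$ cut off along $F_{i+1}\cup\cdots$, noting that a piece which is genuinely of $\partial_0$-type can only persist inside a layer meeting $T$; Lemma \ref{lemma:Haken'sLemma} supplies the normal form needed for such disks within a fixed layer. In every case we obtain an index $j$ for which $\partial_- W_j = F_{j-1}\cup F_j$ is compressible or $\partial_0$-compressible in $W_j$.

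By the lemma immediately preceding this theorem, $S_j$ is then a weakly reducible Heegaard surface for $W_j$, hence weakly boundary reducible. By Theorem \ref{theorem:SiAreStronglyIrred}, $S_j$ is either strongly boundary irreducible---which is impossible, since a strongly boundary irreducible splitting admits no weakly boundary reducing pair---or $\alpha$-stabilized, in which case $j=1$ and, by Lemma \ref{lemma:dishonestthinisstablized}, the whole decomposition is obtained from $thin(M,\emptyset)$ by a single $\alpha$-stabilization. An $\alpha$-stabilization modifies only the first thick region near $T$, so the thin surfaces of $thin(M,\alpha)$ are exactly the (closed) thin surfaces of $thin(M,\emptyset)$, and the claim reduces to the same statement for $thin(M,\emptyset)$. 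For a closed decomposition every surface is closed, so $\partial_0$-compressibility is vacuous and the argument above applies verbatim using ordinary compressions only, together with the fact (as in \cite{SchaThoTPf3M}, now with the present complexity) that the thick surfaces of a thin closed decomposition are strongly irreducible. This yields the desired contradiction. Finally, the statement for an individual component of $F_i$ follows because a compressing or $\partial_0$-compressing disk for a component of $F_i$ is one for $F_i$.

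I expect the main obstacle to be the bookkeeping in the first step: controlling the intersections of $D$ (and, in the $\partial_0$-case, of the arc $\beta$) with the possibly many intermediate thin surfaces and with the boundary annuli $\partial_0 W_j$, and verifying that the innermost/outermost pieces genuinely lie inside a single layer so that the preceding lemma and Lemma \ref{lemma:Haken'sLemma} can be applied. Everything downstream---deducing weak reducibility, contradicting Theorem \ref{theorem:SiAreStronglyIrred}, and disposing of the $\alpha$-stabilized exceptional case via Lemma \ref{lemma:dishonestthinisstablized}---is essentially formal given the results already established.
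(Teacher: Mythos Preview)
Your approach is essentially the paper's: the paper's entire proof is the one-line citation ``We see from \cite{SchaThoTPf3M} Rule~5 and observations in Lemma~\ref{lemma:negboundaryincomp}'', and what you have written is precisely a detailed unpacking of the Rule~5 argument in the boundary-compression-body setting, together with the extra case analysis forced by the $\alpha$-stabilized alternative in Theorem~\ref{theorem:SiAreStronglyIrred}. The one place the paper differs from you is in the treatment of boundary incompressibility: rather than running $\partial_0$-compressing disks through the same layer-by-layer innermost/outermost machinery (the part you flag as the main bookkeeping obstacle), the paper simply invokes Lemma~\ref{lemma:negboundaryincomp}, which already shows that $\partial_- C$ admits no $\partial_0$-compressing disk inside a boundary compression body; this short-circuits your $\partial_0$ case at the level of a single layer and removes the need to track the arc $\beta$ across multiple $\partial_0$-annuli. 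Your route still works, but using Lemma~\ref{lemma:negboundaryincomp} as the paper does is cleaner for exactly the reason you anticipated.
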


\begin{corollary} If $\alpha$-sloped thin position is not an $\alpha$-sloped Heegaard splitting, then $M$ contains an essential, $\alpha$-sloped or closed surface.  
\end{corollary}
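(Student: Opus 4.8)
The plan is to deduce this directly from Theorem~\ref{theorem:ThinInc}, after first clearing away the inessential possibilities with Lemma~\ref{lemma:thinsphereessential} and a boundary-parallel reduction. Start with a decomposition $\{F_i,S_i\}$ realizing $w(M,\alpha)$, i.e.\ $thin(M,\alpha)$. If this decomposition is not a single $\alpha$-sloped Heegaard splitting, then there are at least two thick surfaces and hence at least one thin surface; fix one, say $F=F_j$. By Theorem~\ref{theorem:ThinInc} every component of $F$ is incompressible and boundary incompressible in $M$. Consequently, to produce the desired surface it suffices to locate a component of $F$ that is neither a $2$-sphere bounding a ball nor boundary parallel: such a component is closed if it is disjoint from $T$, and otherwise (since the surfaces of an $\alpha$-sloped decomposition acquire their boundary only from $\alpha$-sloped beads on $T$) has all of its boundary on $T$ with slope $\alpha$, hence is $\alpha$-sloped.

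Sphere components are harmless: by Lemma~\ref{lemma:thinsphereessential} any sphere component of $F$ is essential (does not bound a ball), and we are done; when $M$ is irreducible, as for $M=\overline{S^3-K}$ with $K$ nontrivial, there are no sphere components at all. So assume $F$ has no sphere components, and suppose toward a contradiction that every component of $F$ is boundary parallel. Then $F$, being incompressible, cuts off from $M$ a product region $R$ lying between $F$ and $\partial M$, and the part of the decomposition carried by $R$ is a sub-decomposition of the product $R$ (a closed one, since the $\alpha$-sloped beads lie over $T$, except in the subcase $\partial R\cap\partial M=T$). The idea is to replace the decomposition of $R$ by the trivial collar structure: isotope $F$ out onto $\partial M$, delete $F$ and every thick surface lying inside $R$ from the decomposition, and absorb the resulting collar of $\partial M$ into the piece $W$ of the decomposition lying immediately on the far side of $F$. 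Absorbing a collar does not change $W$ or its splitting surface, so no thick surface outside $R$ is affected, while the thick surfaces that lay inside $R$ disappear from the width multiset. This yields an $\alpha$-sloped decomposition of $M$ of strictly smaller width, contradicting the thinness of $\{F_i,S_i\}$.

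Hence some component $G$ of $F$ is not boundary parallel. By Theorem~\ref{theorem:ThinInc} it is incompressible and boundary incompressible, by Lemma~\ref{lemma:thinsphereessential} it is essential if it happens to be a sphere, and by the previous paragraph it is not boundary parallel; therefore $G$ is an essential surface in $M$, closed if it misses $T$ and $\alpha$-sloped otherwise. The main obstacle in carrying this out is the boundary-parallel reduction of the middle paragraph: one must check that the absorption move really does strictly decrease the width multiset and must handle the case in which the product region $R$ contains $\alpha$-sloped beads (first isotoping those beads out to $T$, or else invoking Lemma~\ref{lemma:dishonestthinisstablized} together with Lemma~\ref{lemma:boundary+lessboundary} to dispose of it). Everything else is a direct application of the already-established incompressibility of thin surfaces.
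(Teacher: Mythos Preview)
The paper gives no proof of this corollary at all: it is stated immediately after Theorem~\ref{theorem:ThinInc} and is evidently meant to be read as a one-line consequence --- if the thin decomposition is not a single splitting then some $F_j$ exists, and Theorem~\ref{theorem:ThinInc} (together with Lemma~\ref{lemma:thinsphereessential} for spheres) says each component is incompressible and boundary incompressible, hence essential, and by construction its boundary is either empty or of slope~$\alpha$. Your core argument is exactly this.

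Where you diverge is in your middle paragraph: you add a boundary-parallel reduction step that the paper simply does not carry out. Either the paper is tacitly using ``essential'' to mean incompressible and boundary incompressible (note that boundary-parallel annuli bounding beads are already stripped out in the very definition of the thin surfaces $F_i$), or it is silently relying on the thin structure to preclude boundary-parallel components. In any case, your reduction argument is precisely the place where your own write-up becomes incomplete --- you flag it yourself as the ``main obstacle,'' and the sketch you give (isotoping $F$ onto $\partial M$, absorbing the collar, handling beads inside~$R$) is not fully justified: you do not verify that the resulting object is still a valid $\alpha$-sloped decomposition, you do not handle nesting of multiple boundary-parallel components, and the appeal to Lemma~\ref{lemma:dishonestthinisstablized} at the end is not clearly connected to the situation. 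So relative to the paper you are doing strictly more work, and the extra work is the only part that is not airtight; if you want to match the paper, drop the middle paragraph and simply cite Theorem~\ref{theorem:ThinInc} and Lemma~\ref{lemma:thinsphereessential}.
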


A separating surface $S$ is called \emph{boundary weakly incompressible} if any two compressing or $\partial_0$-compressing disks for $S$ on opposite sides of $S$ (boundary) intersect.  

Finally we see from Rule 6 of \cite{SchaThoTPf3M}

\begin{lemma}
In a thin $\A$-sloped decomposition, each $S_i$ is boundary weakly incompressible in $M$. 
\end{lemma}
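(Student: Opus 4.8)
The plan is to argue by contradiction and reduce to Theorem~\ref{theorem:SiAreStronglyIrred}. Suppose some thick surface $S_i$ of a thin $\alpha$-sloped decomposition $\{F_j,S_j\}$ of $M$ along $T$ admits disjoint disks $D_1,D_2$ on opposite sides of $S_i$ in $M$, each a compressing or $\partial_0$-compressing disk for $S_i$. Write $M\,|\,S_i = X\cup Y$ with $C_i^1\subset X$ and $C_i^2\subset Y$, so that $D_1\subset X$ and $D_2\subset Y$. I would aim to isotope $D_1$ into $C_i^1$ and $D_2$ into $C_i^2$, keeping them disjoint, so that they exhibit the boundary Heegaard splitting $W_i=C_i^1\cup_{S_i}C_i^2$ as boundary weakly reducible; Theorem~\ref{theorem:SiAreStronglyIrred} then forces $S_i$ to be $\alpha$-stabilized, which pins down the one exceptional configuration.

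First I would push $D_1$ off all the thin surfaces it meets (the $F_j$ with $j<i$, which lie in $X$), and symmetrically push $D_2$ off the $F_j$ with $j\ge i$ in $Y$. By Theorem~\ref{theorem:ThinInc} every component of every $F_j$ is incompressible and boundary incompressible in $M$, so circles of $D_1\cap\bigcup_j F_j$ can be removed by the standard innermost-disk argument, and (when $D_1$ is a $\partial_0$-compressing disk, with $\partial D_1=\gamma_1\cup\beta_1$, $\gamma_1\subset S_i$ an essential arc and $\beta_1\subset T$) the arcs of $D_1\cap F_j$ have both endpoints on $\beta_1$ — since $S_i\cap F_j=\emptyset$ — so an outermost one exhibits a boundary compression of $F_j$ along $T$, which cannot exist; thus those arcs can be removed as well. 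After this $D_1$ is disjoint from all thin surfaces, hence lies in the component of $X$ incident to $S_i$, namely $C_i^1$; moreover $\beta_1$ never crosses $\partial F_{i-1}\subset T$, so it stays in $T\cap C_i^1=\partial_0 C_i^1$, and $D_1$ is a compressing or $\partial_0$-compressing disk for $C_i^1$. The same applies to $D_2$, and since these isotopies are supported near the thin surfaces they keep $D_1\cap D_2=\emptyset$.

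Then $D_1\subset C_i^1$ and $D_2\subset C_i^2$ are disjoint (boundary) compressing disks on opposite sides of $S_i$ in $W_i$, so $W_i$ is boundary weakly reducible. By Theorem~\ref{theorem:SiAreStronglyIrred} this is impossible unless $S_i$ is $\alpha$-stabilized, and in that case the same theorem gives $i=1$, with $S_1$ an $\alpha$-stabilized Heegaard surface for $W_1$ and every other $S_j$ a strongly irreducible closed Heegaard surface for $W_j$ — hence, by the same disk-pushing (or classically), boundary weakly incompressible in $M$. So, apart from an $\alpha$-stabilized first thick surface (the dishonest configuration already isolated by Theorem~\ref{theorem:SiAreStronglyIrred} and Lemma~\ref{lemma:dishonestthinisstablized}), every $S_i$ is boundary weakly incompressible in $M$.

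The hard part will be the arc-removal step for $\partial_0$-compressing disks: I have to be certain that an arc of $D_k\cap F_j$ cannot end on the $S_i$-arc $\gamma_k$ of $\partial D_k$ (it cannot, since $S_i$ is disjoint from the thin surfaces), so that such an arc genuinely yields a boundary compression of $F_j$ along the torus boundary and is obstructed by Theorem~\ref{theorem:ThinInc}; and I must check that after the isotopy the boundary arc $\beta_k$ has not been dragged through $\partial F_j$ into a neighbouring piece, so that the resulting disk really is a $\partial_0$-compressing disk for $C_i^k$. Everything else is the bookkeeping of Scharlemann--Thompson's Rule~6 adapted to boundary compression bodies.
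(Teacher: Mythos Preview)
Your approach is essentially the paper's: the paper gives no argument beyond citing Rule~6 of \cite{SchaThoTPf3M}, and what you have written is precisely the standard Rule~6 argument adapted to boundary compression bodies --- push the two disks off the thin surfaces using Theorem~\ref{theorem:ThinInc}, land them in $C_i^1$ and $C_i^2$, and invoke Theorem~\ref{theorem:SiAreStronglyIrred}. Your innermost-circle and outermost-arc reductions are correct, and you are right that arcs of $D_k\cap F_j$ can only end on $\beta_k\subset T$ since $S_i\cap F_j=\emptyset$.

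One point worth noting: you correctly isolate the $\alpha$-stabilized case as an exception, whereas the paper's statement of the lemma is unqualified. Since the paper itself observes (just before the definition of boundary weakly reducible) that $\alpha$-stabilizations of non-trivial Heegaard splittings are weakly reducible, an $\alpha$-stabilized $S_1$ genuinely fails to be boundary weakly incompressible in $M$ when the decomposition has a single thick level. So your hedged conclusion is in fact more accurate than the lemma as stated; the paper is implicitly working in the honest case (cf.\ Lemma~\ref{lemma:dishonestthinisstablized} and \S\ref{subsection:HonestSlopes}).
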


\subsection{Honest Slopes}
\label{subsection:HonestSlopes}
  
While the above construction shows that it is always possible to decompose a manifold $M$ with torus boundary using surfaces with any slope, for some slopes on $\partial M$, $\alpha$-stabilization of a closed decomposition is the only way to obtain an $\alpha$-sloped decomposition.  We would like to be able to pick out slopes for which this is true.  In general we would like to consider splittings which are not stabilized or $\alpha$-stabilized.

We call an $\alpha$-sloped decomposition $\{F_i, S_i\}$ of $M$ $ \emph{dishonest}$ if it is $\A$-stabilized, and \emph{honest} otherwise.   We call a slope $\alpha \subset \partial M$ $\emph{honest}$ if $M$ has an $\alpha$-sloped decomposition which is honest.  

If $M$ contains an $\alpha$-sloped two sided essential surface $S$, $\alpha$ is an honest slope.  We can see this by cutting $M$ along $S$ and considering a handle and bead decomposition of each component, $M_1$ and $M_2$.  Gluing $M_1$ and $M_2$ back together along $S$ gives an $\alpha$-sloped decomposition of $M$.  If this decomposition is $\alpha$-stabilized, $\A$-destabilize as much as possible.  Since $S$ is incompressible, it is not an $\alpha$-stabilized surface, and thus the $\A$-destabilized decomposition still has slope $\alpha$, hence $\alpha$ is an honest slope.  

$\alpha$-sloped thin surfaces are essential, but not all honest decompositions have these essential thin surfaces.  It is known that the set of slopes with essential surfaces is finite \cite{HatOtBCoIS}, but it is unknown if the set of honest slopes is also finite.  Putting this fact together with Theorem \ref{theorem:ThinInc} we see

\begin{theorem}
\label{theorem:finitelymany}
For all but finitely many $\alpha$, $thin(M,\alpha)$ is an $\A$-Heegaard splitting or a dishonest decomposition.

\end{theorem}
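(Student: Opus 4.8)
The plan is to establish the following sharper claim: if $thin(M,\alpha)$ is honest and is \emph{not} an $\alpha$-sloped Heegaard splitting, then $\alpha$ lies in a finite set of slopes determined by $M$. Write $thin(M,\alpha)=\{F_i,S_i\}_{i=1}^{k}$ and suppose $k\ge 2$, so that there is at least one thin surface $F_j$, $1\le j\le k-1$. By Theorem~\ref{theorem:ThinInc} every component of $F_j$ is incompressible and boundary incompressible in $M$, and by Lemma~\ref{lemma:thinsphereessential} no component is an inessential $2$-sphere. I would then split into two cases.

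If some thin surface $F_j$ has nonempty boundary, then a component of it is an incompressible, boundary incompressible, properly embedded surface in $M$ whose boundary on $T$ has slope $\alpha$. By the finiteness of boundary slopes of such surfaces \cite{HatOtBCoIS}, only finitely many slopes $\alpha$ arise this way, and we are done. This is the expected principal case, and it is immediate once Theorem~\ref{theorem:ThinInc} is in hand.

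Otherwise every $F_1,\dots,F_{k-1}$ is closed. Since the underlying handle-and-bead decomposition contains an $\alpha$-sloped bead, some thick surface $S_{i_0}$ has boundary of slope $\alpha$; applying Lemma~\ref{lemma:thinclosed} to each closed $F_j$ shows the closed surfaces occupy an initial and/or terminal block of the decomposition, which — as $S_{i_0}$ is not closed — forces $i_0\in\{1,k\}$, makes $S_{i_0}$ the unique $\alpha$-sloped thick surface, and makes every other $S_j$ a closed Heegaard surface. Up to reversing the order of the decomposition, assume $i_0=1$. Then $W_1$ is a proper submanifold of $M$ containing $T$ whose boundary, apart from the torus boundary of $M$, is the closed essential surface $F_1$, and $S_1$ is an $\alpha$-sloped Heegaard surface of $W_1$. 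By Theorem~\ref{theorem:SiAreStronglyIrred}, either $S_1$ is $\alpha$-stabilized, in which case $thin(M,\alpha)$ is dishonest and there is nothing to prove, or $S_1$ is a strongly boundary irreducible $\alpha$-sloped Heegaard splitting of $W_1$.

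Thus the one remaining possibility — and the main obstacle — is that $W_1$ carries a strongly boundary irreducible $\alpha$-sloped Heegaard splitting for infinitely many $\alpha$; this must be excluded. The leverage I would use is that $c(S_1)$ is an entry of the multiset $w(M,\alpha)$, so by Theorem~\ref{theorem:ComplexityBounds} it is bounded in terms of $w(M,\emptyset)$ alone, hence independently of $\alpha$; since $\partial S_1$ is a nonempty union of $\alpha$-curves, the genus and number of boundary components of $S_1$ are therefore bounded uniformly in $\alpha$. It then remains to show that a fixed manifold with torus boundary $T$ admits a strongly boundary irreducible Heegaard splitting of bounded topological type meeting $T$ in only finitely many slopes. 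I would derive this from the behaviour of Heegaard splittings under Dehn filling: for all but finitely many $\alpha$, every Heegaard surface of $W_1(\alpha)$ of bounded genus is isotopic to one arising from a splitting of $W_1$ disjoint from $T$, which is incompatible with $S_1$ meeting $T$ in essential curves. Making this last step precise, and uniform over all slopes $\alpha$, is where the real work lies; everything before it is a direct application of the results already proved in this section together with \cite{HatOtBCoIS}.
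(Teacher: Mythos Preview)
Your Case~1 is precisely the paper's entire argument: the text immediately preceding the theorem reads ``Putting this fact [Hatcher's finiteness of boundary slopes] together with Theorem~\ref{theorem:ThinInc} we see'', and that is the whole proof. The paper does not isolate or treat your Case~2 at all; it appears to regard the combination of Hatcher's theorem with the incompressibility of thin surfaces as sufficient.

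You are therefore being more careful than the paper in singling out the possibility that every thin surface is closed. However, the strategy you sketch for Case~2 has a gap beyond the one you already flag. You phrase the remaining problem as showing that ``a fixed manifold with torus boundary $T$ admits a strongly boundary irreducible Heegaard splitting of bounded topological type meeting $T$ in only finitely many slopes'', and propose to invoke results on Heegaard surfaces under Dehn filling. But $W_1$ is \emph{not} a fixed manifold: it is the component of $M\,|\,F_1$ containing $T$, and the closed essential surface $F_1$ may depend on $\alpha$. The bound you extract from Theorem~\ref{theorem:ComplexityBounds} controls $c(S_1)$ and hence the genus of $F_1$, but there is no general finiteness theorem for isotopy classes of closed incompressible surfaces of bounded genus in a compact $3$-manifold, so you cannot reduce to finitely many candidate $W_1$'s. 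The Moriah--Rubinstein/Rieck--Sedgwick style statements you allude to give, for a \emph{single} manifold, finitely many exceptional filling slopes; applied here they would yield a finite exceptional set for each $W_1$ separately, with no uniform control over the union. Closing this gap would require either pinning down $F_1$ up to finite ambiguity or proving a uniform version of the Dehn filling result --- substantially more than what the paper's one-line proof (or the results available in the paper) provides.
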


    \section{Induced Splittings and Dehn Filled Manifolds}
    \label{sec:InducedHSplittings}

  Both surfaces with boundary and boundary compression bodies in $M$ are ``filled in" in a nice way by Dehn filling.     As usual, let $M$ be a compact, orientable manifold with torus boundary components, each of which is parametrized by elements of $\{\mathbb{Q} \cup \infty\}$.  

Select a torus boundary component $T$ of $M$. \emph{The manifold which results from $\alpha$-sloped Dehn filling $M$ along T, $M_T(\A)$} is $M \cup_T S^1 \times D^2$, where a meridian of $S^1 \times D^2$ is identified to a circle on $T$ with slope $\alpha$.  
 If $M$ only has one boundary component or it is clear which component is being Dehn filled, the Dehn filling will be denoted $M(\A)$.  If $K\subset S^3$ we denote $\alpha$-sloped Dehn surgery on $K$ ($\A$-sloped Dehn filling on the exterior of $K$) by $K(\A)$. 

 Denote the surface in $M_T(\A)$ resulting from ``capping off" the boundary components  ${\partial_i F}$ of $F$ by $\hat{F}$.   More precisely, $\hat{F}= F \cup_{\partial F} \{D_i\}$.  Note that if $M$ only has one boundary component then $\hat{F}_T$ is a closed surface.  In either case, the genus of $F$ equals the genus of $\hat{F}_T$.   
%

If $C\subset M$ is a boundary compression body with  with boundary slope $\A$ on $T$ and $\partial_0$-components ${{\partial_0} C}$ on $T$, then the Dehn filling $M_T(\A)$ results in attaching a two handle along every component of ${{\partial_0} C}$.  Denote the compact sub-manifold of $M(\A)$ resulting from ``capping off" ${{\partial_0} C}$ by $\hat{C}$.  The genus of both $\partial_-C$ and $\partial_+ C$ remain unchanged, and the number of boundary components of $\partial_-C$ and $\partial_+ C$ as well as  the number of components of $\partial_0 C$ go down.  


Putting these two facts together we see:

\begin{theorem}
\label{theorem:cappingoff}
 If $\{F_i, S_i\}$ is an $\alpha$-sloped generalized Heegaard splitting of $M$ then $\{\hat{F_i}, \hat{S_i}\}$ is a  generalized Heegaard splitting of $M(\alpha)$.  We call  $\{\hat{F_i}, \hat{S_i}\}$  the \emph{induced generalized Heegaard splitting} of $M(\A)$.  For completeness, the induced splitting coming from a $\emptyset$-sloped decomposition $\{F_i, S_i\}$ of $M$ is again $\{F_i, S_i\}$.
 
 \end{theorem}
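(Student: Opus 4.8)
The plan is to verify the two claimed facts about how Dehn filling interacts with the pieces of an $\alpha$-sloped generalized Heegaard splitting, and then simply assemble them. Recall that an $\alpha$-sloped decomposition is built from a handle and bead decomposition $M = \mathcal{H}^0 \cup b^0 \cup \mathcal{H}^1 \cup \mathcal{H}^2 \cup b^2 \cup \mathcal{H}^3$, where the $\alpha$-sloped beads are the only pieces that meet the distinguished boundary torus $T$. First I would observe that the effect of $\alpha$-sloped Dehn filling $M_T(\alpha) = M \cup_T S^1 \times D^2$ on this decomposition is entirely local to the $\alpha$-sloped beads: since the meridian of the filling solid torus is glued along a curve of slope $\alpha$ on $T$, and each $\alpha$-sloped bead $b^i$ meets $T$ in its attaching annulus $\partial_0 b^i$ (a neighborhood of a $2$-dimensional $1$-handle of slope $\alpha$ together with a $2$-dimensional $0$- or $2$-handle), the filling caps off each such annulus with a disk. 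A $0$-bead (solid torus with $\partial_0 b^0$ its attaching region) becomes, after capping off $\partial_0 b^0$, a $3$-ball, i.e. a $0$-handle; dually a $2$-bead becomes a $3$-handle. Thus $M_T(\alpha)$ inherits the handle decomposition $\mathcal{H}^0 \cup \hat{b^0} \cup \mathcal{H}^1 \cup \mathcal{H}^2 \cup \hat{b^2} \cup \mathcal{H}^3$ where each $\hat{b^0}$ is a $0$-handle and each $\hat{b^2}$ a $3$-handle — that is, a bona fide (closed) handle decomposition with no beads, which is exactly the data of a generalized Heegaard splitting of $M_T(\alpha)$ in the sense of Scharlemann–Thompson.

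Next I would track the surfaces. By the discussion preceding the theorem, capping off the boundary components of a properly embedded surface $F$ of slope $\alpha$ produces $\hat{F}$ with $g(\hat{F}) = g(F)$, and capping off the $\partial_0$-annuli of a boundary compression body $C$ of slope $\alpha$ produces $\hat{C}$ with $g(\partial_\pm \hat{C}) = g(\partial_\pm C)$ and with $\partial_0 \hat{C} = \emptyset$ — i.e. $\hat{C}$ is an honest compression body. Applying this to the thick surfaces $S_i$ (boundaries of the submanifolds obtained after adding a collection $b^0_i \cup \mathcal{H}^1_i$) and the thin surfaces $F_i$ (boundaries after adding $b^2_i \cup \mathcal{H}^2_i$), and using that the submanifolds $W_i = (F_{i-1}\times I)\cup b^0_i \cup \mathcal{H}^1_i \cup \mathcal{H}^2_i \cup b^2_i$ have $\alpha$-sloped boundary Heegaard splittings $W_i = C_i^1 \cup_{S_i} C_i^2$, the capped-off pieces $\hat{W_i} = \hat{C_i^1}\cup_{\hat{S_i}} \hat{C_i^2}$ are ordinary Heegaard splittings of the filled submanifolds. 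Since capping commutes with the gluing along the $F_i$ (the $F_i$ and the compression bodies are capped along disjoint disks lying in the filling solid torus), $\{\hat{F_i},\hat{S_i}\}$ is precisely a generalized Heegaard splitting of $M_T(\alpha)$; one also checks that spheres bounding $0$- or $3$-handles and boundary-parallel annuli bounding beads are, after capping, exactly the spheres one deletes in forming a Scharlemann–Thompson thin/thick surface, so the bookkeeping in the definitions of $S_i$ and $F_i$ is respected. The $\emptyset$-sloped case is immediate since no beads and no capping are involved, so the decomposition is returned unchanged.

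The one point requiring genuine care — and the step I expect to be the main obstacle — is making sure the order of handle and bead additions survives the filling so that the $\hat{W_i}$ really are the complementary pieces $M_T(\alpha)\,|\,\{\hat{F_i}\}$, rather than something that only superficially resembles a generalized Heegaard splitting. Concretely, one must check that capping off an $\alpha$-sloped $0$-bead turns it into a $0$-handle attached at the \emph{same stage} of the decomposition, and that no $\alpha$-sloped bead is incident to a handle in a way that the filling could force a reordering; this is where the hypothesis that $T$ is connected and that the beads meet $T$ in disjoint annuli does the work, since the filling solid torus meets the decomposition only inside $n(T)$, i.e. inside the beads themselves. I would dispatch this by a direct induction on the stages $i$, at each stage invoking the local models above for $\hat{b^0}$ and $\hat{b^2}$ and Lemma~\ref{lemma:boundary+lessboundary}-type bookkeeping on the Euler characteristics, and then conclude that $\{\hat{F_i},\hat{S_i}\}$ satisfies all the defining properties of a generalized Heegaard splitting of $M(\alpha)$.
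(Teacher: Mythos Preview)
Your proposal is correct and follows exactly the line of reasoning the paper intends: the theorem is presented there without a formal proof, merely as the consequence of the two preceding observations (that $\alpha$-sloped surfaces cap off to closed surfaces and that $\alpha$-sloped boundary compression bodies cap off to ordinary compression bodies), introduced by the phrase ``Putting these two facts together we see.'' Your write-up simply fleshes out that same argument---tracking how beads become $0$- and $3$-handles and how the stagewise structure is preserved---in more detail than the paper itself supplies.
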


Exploiting this we see that for knots in $S^3$ there are restrictions on what types of thin decompositions are possible.

\begin{theorem}
\label{theorem:planar}
Let $K\subset S^3$.  If $\A$-sloped thin position for $K$ is realized by a single planar Heegaard surface, then $\alpha=\infty$.  

\begin{proof}Say $\alpha \neq \infty$.  Then $\alpha$-sloped Dehn filling on $K$ results in  $\hat{S}$, a genus 0 Heegaard surface for $K(\alpha)$.  Thus $K(\alpha)$ must be $S^3$ and was the result of non-trivial surgery on $K$, a contradiction \cite{GorLueKADBTC}.  
\end{proof}
\end{theorem}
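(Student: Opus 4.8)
The plan is to use Theorem~\ref{theorem:cappingoff} to transport the $\alpha$-sloped Heegaard surface for $K$ to an honest Heegaard surface for $K(\alpha)$, and then invoke the resolution of the Property~R / Poenaru--Gordon--Luecke circle of results to rule out all slopes except the meridian. More precisely: suppose, for contradiction, that $\alpha$-sloped thin position for the exterior $M = \overline{S^3 - K}$ is realized by a single planar surface $S$ — that is, the whole decomposition consists of one thick surface $S$ of genus $0$, with $M = C_1 \cup_S C_2$ an $\alpha$-sloped Heegaard splitting — and that $\alpha \neq \infty$.

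First I would apply the ``capping off'' construction: since $S$ is a single $\alpha$-sloped Heegaard surface, Theorem~\ref{theorem:cappingoff} (with no thin surfaces) says that $\hat S$ is a (genuine, closed) Heegaard surface for $M(\alpha) = K(\alpha)$, and the genus of $\hat S$ equals the genus of $S$, which is $0$. Thus $K(\alpha)$ admits a genus $0$ Heegaard splitting, so $K(\alpha) \cong S^3$. Next I would note that since $K$ is understood here to be non-trivial (the only knot for which $S^3$-surgery along a non-meridional slope is possible would have to be trivial), and $\alpha \neq \infty$ is by hypothesis not the meridional slope, the filling $M(\alpha) = K(\alpha)$ is the result of a \emph{non-trivial} Dehn surgery on $K$ yielding $S^3$. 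This directly contradicts the theorem of Gordon--Luecke \cite{GorLueKADBTC} that knots are determined by their complements — equivalently, that non-trivial surgery on a non-trivial knot never produces $S^3$. Hence $\alpha = \infty$, as claimed.

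The only genuine subtlety — and the step I expect to be the main obstacle — is making sure that ``a single planar Heegaard surface'' genuinely forces $\hat S$ to be a \emph{bona fide} Heegaard surface of $K(\alpha)$, i.e.\ that the capping-off in Theorem~\ref{theorem:cappingoff} turns the two boundary compression bodies $C_1, C_2$ into honest compression bodies of $S^3$-filling, with the genus of $\hat S$ equal to $0$. This is exactly the content of the discussion preceding Theorem~\ref{theorem:cappingoff}: capping off a boundary compression body along its $\partial_0$-annuli on $T$ leaves the genus of $\partial_\pm C$ unchanged and only decreases the number of boundary components, so a genus $0$ boundary Heegaard surface with two boundary components caps off to a genus $0$ closed Heegaard surface. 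One should also confirm the degenerate case $g=0$, $|\partial S|=2$ is not itself somehow ``dishonest'' in a way that makes $\hat S$ a reducible/trivial splitting — but even if $\hat S$ is reducible, a genus $0$ Heegaard splitting still forces the ambient manifold to be $S^3$, so the conclusion is unaffected. With that observed, the argument is essentially immediate from Theorem~\ref{theorem:cappingoff} and \cite{GorLueKADBTC}.
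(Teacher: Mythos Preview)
Your proposal is correct and follows essentially the same route as the paper: assume $\alpha\neq\infty$, cap off the planar $\alpha$-sloped Heegaard surface via Theorem~\ref{theorem:cappingoff} to obtain a genus~$0$ Heegaard surface for $K(\alpha)$, hence $K(\alpha)\cong S^3$, and then invoke Gordon--Luecke to reach a contradiction. Your additional remarks about verifying that $\hat S$ is a genuine Heegaard surface and about the implicit non-triviality of $K$ are sound elaborations, but the core argument is identical to the paper's.
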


It is natural to ask under what circumstances  Heegaard genus drops under Dehn filling.  For results on the subject see for example \cite{MorSedTHSoDFM}, \cite{RieSedPoHSUDF}. The same question is of interest in the context of $\alpha$-sloped Heegaard genus.  One would hope that performing $\A$-Dehn filling on a thin decomposition of $M$ would result in a thin, or at least locally thin decomposition of $M(\A)$.   Examples where this is not the case, however, come from the simplest Dehn filling, the meridional filling.

\begin{example}
Let $K\subset S^3$ be a knot which is $(2,1)$,  $(1,k)$ for $k \geq 3$ and $(0, s)$ for $s \geq 5$.  Then $w(K, \infty)=\{7\}$ and $thin(K, \infty)=\{S\}$, the twice punctured genus 2 Heegaard surface for $S^3$ which realizes $K$ as $(2,1)$.  Clearly  $\hat{S}\subset K(\infty)=S^3$ is not a thin Heegaard surface, nor is it a strongly irreducible Heegaard surface.  
\end{example}

We can also use this fact to detect honest slopes.  Any slope for which there is a drop in Heegaard genus under Dehn filling is honest.

\begin{theorem}
\label{theorem:dropgenushonest}

Let $\alpha \epsilon \mathbb{Q}$ and let $M$ be a compact orientable manifold  with torus boundary. If $g(M(\alpha))<g(M)$ then $\alpha$ is an honest slope.  

\begin{proof}
Let  $K'$ be as above, $g=g(M)$ and let $S$ be a minimal genus Heegaard surface for $M(\alpha)$.  Put $K'$ in minimal bridge position with respect to $S$.   $S-n(K')=S' \subset \overline{M(\A)-n(K')}=M$ is an $\alpha$-sloped Heegaard surface for $M$.  If $S'$ were alpha stabilized then by Lemma \ref{lemma:alphastabcomptoclosed} there would be a compressing disk $D$ for $S'$ such that $S' | D$ was a closed Heegaard surface for $M$ with the same genus as $S'$.  But since $g(M(\alpha))<g(M)$, $g(S)<g(M)$, a contradiction.  Thus we have exhibited an honest $\alpha$-sloped decomposition for $M$, and $\A$ is an honest slope.  
\end{proof}
\end{theorem}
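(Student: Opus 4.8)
The plan is to prove Theorem~\ref{theorem:dropgenushonest} by producing an explicit honest $\alpha$-sloped decomposition of $M$ out of a minimal-genus Heegaard splitting of the Dehn-filled manifold $M(\alpha)$, and then invoking Lemma~\ref{lemma:alphastabcomptoclosed} to rule out the possibility that this decomposition is $\alpha$-stabilized. The key observation is that bridge position of the surgery dual knot with respect to a Heegaard surface of $M(\alpha)$ restricts (after removing a neighborhood of the dual knot) to an $\alpha$-sloped Heegaard surface of $M$; this is essentially the reverse of the ``capping off'' operation of Theorem~\ref{theorem:cappingoff} applied at the level of a single surface.

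First I would set up notation: let $K'\subset M(\alpha)$ be the core of the Dehn filling solid torus, so that $M=\overline{M(\alpha)-n(K')}$ and a meridian of $n(K')$ has slope $\alpha$ on $T$. Let $S$ be a minimal-genus Heegaard surface for $M(\alpha)$, so $g(S)=g(M(\alpha))$, and isotope $K'$ into minimal bridge position with respect to $S$. Set $S'=S\cap M = S-n(K')$. The standard fact that removing a neighborhood of a knot in bridge position from a Heegaard surface yields a surface cutting the manifold into boundary compression bodies gives that $S'$ is an $\alpha$-sloped Heegaard surface for $M$ along $T$, with $g(S')=g(S)$. So $M$ admits an $\alpha$-sloped Heegaard splitting of genus $g(M(\alpha))$.

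Next comes the honesty argument, which is the crux. Suppose for contradiction that $S'$ is $\alpha$-stabilized (i.e., the decomposition $\{S'\}$ is dishonest). By Lemma~\ref{lemma:alphastabcomptoclosed}, there is a compressing disk $D$ for $S'$ such that $S'\mid D$ is the disjoint union of an $\alpha$-sloped boundary-parallel annulus and a Heegaard surface for $M$ that is either closed or $\alpha$-sloped, with genus equal to $g(S')$. In particular $M$ has a Heegaard surface of genus $g(S')=g(S)=g(M(\alpha))$. But the genus of $M$ is the minimum genus over all (closed) Heegaard surfaces of $M$, so $g(M)\le g(M(\alpha))$, contradicting the hypothesis $g(M(\alpha))<g(M)$. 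Hence $S'$ is not $\alpha$-stabilized, so $\{S'\}$ is an honest $\alpha$-sloped decomposition of $M$, and therefore $\alpha$ is an honest slope.

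I expect the main obstacle to be the first step's claim that $S'=S-n(K')$ is genuinely an $\alpha$-sloped Heegaard surface — i.e., that each side is a boundary compression body in the sense of Section~\ref{subsec: BoundaryCompressionBodies}, with the boundary on $T$ lying in $\partial_0$ of each side and with slope $\alpha$. This requires checking that near each bridge arc the handle/bead structure is as prescribed: the maxima/minima of $K'$ relative to $S$ become the $0$- and $2$-beads, and the punctures on $T$ inherit slope $\alpha$ because a meridian of $K'$ is an $\alpha$-curve on $T$. One must also be slightly careful that the ``either closed or $\alpha$-sloped'' alternative in Lemma~\ref{lemma:alphastabcomptoclosed} is harmless: in the closed case the conclusion $g(M)\le g(S')$ is immediate, and in the $\alpha$-sloped case one can iterate (destabilize) finitely often until reaching an honest $\alpha$-sloped surface whose genus is at most $g(S')$, which still contradicts the genus hypothesis via the closed surface obtained by tubing along $T$ — but in fact the cleanest route is simply to note that a genus-$g(S')$ $\alpha$-sloped Heegaard surface always caps off (Theorem~\ref{theorem:cappingoff}) or tubes to a closed Heegaard surface of $M$ of genus $\le g(S')+$(small correction), and since $g(S')<g(M)$ already we get a contradiction as soon as $g(S')<g(M)$; so the argument closes regardless of which alternative occurs.
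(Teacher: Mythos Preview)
Your approach is essentially the same as the paper's: take a minimal-genus Heegaard surface $S$ for $M(\alpha)$, put the surgery-dual core $K'$ in bridge position, set $S'=S\setminus n(K')$, and use Lemma~\ref{lemma:alphastabcomptoclosed} to derive a genus contradiction if $S'$ were $\alpha$-stabilized. Your write-up is in fact more careful than the paper's, which silently assumes the compression in Lemma~\ref{lemma:alphastabcomptoclosed} yields a \emph{closed} splitting; you correctly flag the ``closed or $\alpha$-sloped'' alternative.

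One small correction to your final paragraph: the ``cleanest route'' you propose does not work as stated. Capping off an $\alpha$-sloped Heegaard surface via Theorem~\ref{theorem:cappingoff} produces a Heegaard surface in $M(\alpha)$, not in $M$, so it says nothing about $g(M)$; and tubing along $T$ \emph{raises} genus, so a genus-$g(S')$ $\alpha$-sloped surface tubes to a closed surface of genus strictly larger than $g(S')$, which need not beat $g(M)$. The correct resolution is exactly the iteration you sketch first: each application of Lemma~\ref{lemma:alphastabcomptoclosed} drops $|\partial S'|$ by two while preserving genus, so after finitely many steps you reach either a closed Heegaard surface of $M$ of genus $g(S')<g(M)$ (contradiction) or an honest $\alpha$-sloped Heegaard surface (and you are done). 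Stick with that argument and drop the tubing/capping-off shortcut.
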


\begin{corollary}For every $K\subset S^3$, $\infty$ is an honest slope.  

\begin{proof}

Since $M(\infty)=S^3$, which has genus 0, by Theorem \ref{theorem:dropgenushonest} $\infty$ is an honest slope.  
\end{proof}
\end{corollary}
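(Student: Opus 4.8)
The plan is to deduce this directly from Theorem~\ref{theorem:dropgenushonest}, whose argument goes through verbatim at the meridional slope. The only geometric input is the classical fact that meridional Dehn filling of a knot exterior gives back the ambient sphere: writing $M=\overline{S^3-n(K)}$, we have $M(\infty)=S^3$, and the core of the filling solid torus is isotopic to $K$ itself. So the first step is simply to record that $g(M(\infty))=g(S^3)=0$.

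The second step is to observe that every knot exterior $M\subset S^3$ has Heegaard genus at least $1$: $M$ has non-empty torus boundary, so it is neither a $3$-ball nor $S^3$ and admits no genus-$0$ Heegaard splitting. Combined with the first step this yields the strict inequality $g(M(\infty))=0<g(M)$, which is exactly the hypothesis of Theorem~\ref{theorem:dropgenushonest}. Running the proof of that theorem with $\alpha=\infty$ --- put the core knot in minimal bridge position with respect to a minimal genus Heegaard surface $S$ for $M(\infty)=S^3$, delete a neighborhood of the core to obtain a meridional Heegaard surface $S'$ for $M$, and note via Lemma~\ref{lemma:alphastabcomptoclosed} that if $S'$ were $\infty$-stabilized there would be a closed Heegaard surface for $M$ of genus $g(S')=g(S)<g(M)$, a contradiction --- produces an honest $\infty$-sloped decomposition of $M$. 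Hence $\infty$ is an honest slope, and this holds for every $K\subset S^3$ (including the unknot, where $M$ is a solid torus and $g(M)=1$).

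I do not expect a genuine obstacle: once Theorem~\ref{theorem:dropgenushonest} is available, the statement is a two-line consequence. The only point needing a word of care is the mild abuse of applying a theorem stated for $\alpha\in\mathbb{Q}$ to the slope $\infty$; the clean remedy is that $\alpha$-stabilization, minimal bridge position, and Lemma~\ref{lemma:alphastabcomptoclosed} are all defined for meridional ($\infty$-sloped) surfaces, so nothing in the proof of Theorem~\ref{theorem:dropgenushonest} used finiteness of $\alpha$. Alternatively, one may simply state Theorem~\ref{theorem:dropgenushonest} for $\alpha\in\{\mathbb{Q}\cup\infty\}$ to begin with.
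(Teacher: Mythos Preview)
Your proof is correct and follows essentially the same approach as the paper's one-line argument: $M(\infty)=S^3$ has genus $0$, so Theorem~\ref{theorem:dropgenushonest} applies. You were in fact more careful than the paper in explicitly verifying $g(M)\geq 1$ and in flagging that Theorem~\ref{theorem:dropgenushonest} is stated only for $\alpha\in\mathbb{Q}$; the paper silently applies it at $\alpha=\infty$, and your remark that nothing in its proof uses finiteness of the slope is the right justification.
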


For the following we will need a theorem of Culler, Gordon, Luecke and Shalen \cite{CulGorLueShaDSoK}.  We rephrase and restate the relevant parts for completeness.

\begin{theorem}[Theorem 2.0.2 of \cite{CulGorLueShaDSoK}]
Suppose that $dim H_1(M, \mathbb{Q})>1$.  If $M$ contains an $\alpha$-sloped essential surface then either
\begin{enumerate}
\item $M(\alpha)$ is a Haken manifold
\item $M(\A)$ is a connected sum of two lens spaces
\item $M$ contains a closed incompressible surface
\item $M$ fibers over $S^1$ with fiber a planar surface having boundary slope $\alpha$.  
\end{enumerate}

\end{theorem}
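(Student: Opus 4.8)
The plan is to reduce the whole statement to an analysis of the capped-off surface $\hat S \subset M(\alpha)$, using the same capping-off machinery already set up for $\hat C$ and $\hat F$. Write $M(\alpha) = M \cup_T V$, where $V = S^1 \times D^2$ is glued along $T$ so that a meridian disk of $V$ caps off each slope-$\alpha$ boundary curve of the given two-sided essential surface $S$; then $\hat S$ is $S$ together with these meridian disks. The engine of the argument is the tension between the two hypotheses: since $S$ is incompressible and $\partial$-incompressible in $M$, any compressing disk for $\hat S$ in $M(\alpha)$ is forced to meet the filling solid torus $V$. First I would fix a compressing (or reducing) disk $D$ for $\hat S$ and isotope it to meet $V$ in a minimal number of meridian disks, so that $D \cap M$ is a planar surface properly embedded in $M$ whose boundary consists of an essential curve of $\hat S$ together with parallel copies of the slope-$\alpha$ curve on $T$.

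The first, easy branch is when $\hat S$ remains incompressible in $M(\alpha)$. If $M(\alpha)$ is irreducible, then $\hat S$ is a two-sided incompressible surface; it can be neither a sphere nor a disk (an incompressible sphere would contradict irreducibility, and a $\partial$-parallel disk would contradict incompressibility of $S$), so $\hat S$ certifies $M(\alpha)$ as Haken, which is conclusion (1). If instead $M(\alpha)$ is reducible, I would route this into the reducible analysis below.

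The substance of the theorem lies in the branch where $\hat S$ compresses, that is $D \cap V \neq \emptyset$ with $|D \cap V| = n \geq 1$. Here I would run the standard innermost-disk and outermost-arc surgeries (as in Haken's lemma, cf. \cite{JacLo3MT}) on the intersection of $D$ with a complete meridian system of $V$, together with the $\partial$-incompressibility of $S$, to simplify $D \cap M$, and then maximally compress $\hat S$ inside $M(\alpha)$ and track the resulting surface. This yields a trichotomy: if the maximally compressed surface has a closed non-sphere component that can be isotoped off $V$, it is a closed incompressible surface in $M$, giving conclusion (3); if the disk one encounters is in fact part of a reducing sphere for $M(\alpha)$, then minimizing its intersection with $V$ exhibits $M(\alpha)$ as a reducible filling whose two summands are lens spaces, giving conclusion (2); and if the combinatorics forces $\hat S$ to compress all the way to a sphere with $S$ planar, the surgered configuration displays $M$ as a surface bundle over $S^1$ with planar fiber of boundary slope $\alpha$, giving conclusion (4). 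The homological hypothesis $\dim H_1(M,\mathbb{Q}) > 1$ enters exactly here: it guarantees that the essential surface, or the surface produced after surgery, is homologically nontrivial and in particular non-separating where that is needed, which is what rules out the degenerate outcome and forces one of (2), (3), (4).

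I expect the main obstacle to be precisely this compressible branch, and specifically the clean separation of conclusions (2), (3), and (4). Showing that a reducing sphere produces exactly two lens-space summands rather than some more general reducible piece, and showing that the fibering alternative occurs precisely when $S$ is planar, both rest on the full combinatorial bookkeeping of the intersection graph between $D$ and the meridian disks of $V$, which is the technical heart of \cite{CulGorLueShaDSoK}. I would isolate that bookkeeping as the load-bearing lemma, derive the three outcomes from it, and otherwise treat the incompressible branch and the homological reduction as routine scaffolding around it.
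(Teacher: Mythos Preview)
The paper does not prove this statement at all: it is quoted from \cite{CulGorLueShaDSoK} and used as a black box, introduced with ``We rephrase and restate the relevant parts for completeness'' and immediately applied in the proof of Theorem~\ref{theorem:MainTheorem}. There is therefore nothing in the paper to compare your argument against; the paper's ``proof'' is simply the citation.

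As for your sketch on its own terms: the broad architecture --- cap off $S$ to $\hat S$, split on whether $\hat S$ is incompressible, and in the compressible case analyze a compressing disk minimized against the filling solid torus --- is indeed the skeleton of the CGLS argument. But what you have written is a plan, not a proof, and you say as much: the separation of outcomes (2), (3), (4) is deferred wholesale to ``the full combinatorial bookkeeping of the intersection graph \dots\ which is the technical heart of \cite{CulGorLueShaDSoK}.'' That bookkeeping (Scharlemann cycles, great webs, parity arguments on the labels) is precisely the content of the theorem, and none of it appears here. Two further points: your account of how the hypothesis $\dim H_1(M,\mathbb{Q})>1$ is used (``guarantees \dots\ non-separating where that is needed'') is too vague to be checkable and does not match how CGLS actually deploys homological input; and the claim that a reducing sphere forces \emph{exactly} two lens-space summands is asserted without any mechanism --- in CGLS this is a nontrivial consequence of the Scharlemann-cycle analysis, not an innermost-disk triviality. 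So the proposal is a correct table of contents for the CGLS proof, but it does not supply the chapters.
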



%
%
%
%
%
%
%
%

We are now able to prove Theorem \ref{theorem:MainTheorem}.  

\begin{restatemain}

Let $K \subset S^3$ be a non-trivial knot and let $\alpha \epsilon \{\mathbb{Q} \}$.  Let $S^3-K$ be in $\alpha$-sloped thin position, and let $K(\alpha)$ be the result of $\alpha$-sloped Dehn surgery on $K$.  Then either 
\begin{enumerate}

\item{$\A$-sloped thin position for $S^3-K$ is an $\A$-sloped Heegaard surface }

\item{There exists a closed essential surface in the exterior of $K$}
\label{item:closed}

\item{$K(\alpha)$ is Haken}
\label{item:Haken}

\item{$K(\alpha)$ is a connected sum of two lens spaces}
\label{item:lensspace}

\end{enumerate}
\end{restatemain}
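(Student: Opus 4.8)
The plan is to run a short dichotomy: by the corollary to Theorem~\ref{theorem:ThinInc}, failure of alternative~(1) already produces an essential surface in $M$, and I would then feed that surface into the Culler--Gordon--Luecke--Shalen machinery. So assume $\alpha$-sloped thin position of $M=\overline{S^3-K}$ is \emph{not} an $\alpha$-sloped Heegaard splitting. By the corollary to Theorem~\ref{theorem:ThinInc}, $M$ contains an essential surface $S$ which is either closed or $\alpha$-sloped. If $S$ is closed, then it is by definition a closed essential surface in the exterior of $K$, and alternative~(2) holds; so I may assume $S$ is $\alpha$-sloped, i.e.\ properly embedded in $M$, essential, with $\partial S\subset T$ a non-empty union of curves of slope $\alpha$.

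Next I would apply the Culler--Gordon--Luecke--Shalen dichotomy (Theorem~2.0.2 of~\cite{CulGorLueShaDSoK}, in the form valid for the knot exterior $M$) to the essential surface $S$ of boundary slope $\alpha$. This yields that one of the following holds: $M(\alpha)=K(\alpha)$ is Haken, which is alternative~(3); $M(\alpha)$ is a connected sum of two lens spaces, which is alternative~(4); $M$ contains a closed essential surface, which is alternative~(2); or $M$ fibers over $S^1$ with fiber a planar surface of boundary slope $\alpha$. It then remains only to rule out the last possibility: if $M$ fibered with planar fiber of slope $\alpha$, that fiber would be a planar Seifert surface for $K$, but a fibered knot exterior fibers with connected fiber, namely the unique minimal-genus Seifert surface of $K$, which has a single boundary component, so the fiber would be a disk and $K$ would be the unknot, contrary to hypothesis. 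Hence one of (2), (3), (4) holds, which proves the theorem.

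The step I expect to be the main obstacle is the honest invocation of the Culler--Gordon--Luecke--Shalen theorem: the knot exterior $M=\overline{S^3-K}$ has $\dim H_1(M;\mathbb{Q})=1$, so the hypothesis $\dim H_1(M;\mathbb{Q})>1$ in the statement quoted above is not literally met, and one must instead use the version of their dichotomy that does not require it, whose only extra alternative is precisely the fibered-planar case ruled out above. A cleaner way to sidestep this, and the route I would actually take, is to argue directly on the capped-off surface $\hat S\subset K(\alpha)$: if $\hat S$ is incompressible then $K(\alpha)$ is Haken (alternative~(3)); and if $\hat S$ compresses in $K(\alpha)$, a standard maximal sequence of boundary compressions of $S$ into the filling solid torus yields either a reducing $2$-sphere in $K(\alpha)$ (so $K(\alpha)$ is reducible, hence, $K$ being non-trivial, a connected sum of two lens spaces---alternative~(4)) or a closed essential surface in $M$ (alternative~(2)). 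A smaller technical point, already absorbed into the proofs feeding Theorem~\ref{theorem:ThinInc} and the cited results, is making sure $S$ is genuinely essential---no sphere component (excluded since $S^3-K$ is irreducible, Lemma~\ref{lemma:thinsphereessential}) and no boundary-parallel annulus slipping in among the thin surfaces---and that a reducible $K(\alpha)$ is correctly packaged as alternative~(4); granting these, the proof is a two-case assembly.
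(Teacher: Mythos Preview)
Your main argument---assume (1) fails, extract an essential thin surface via Theorem~\ref{theorem:ThinInc}, split on whether it is closed or $\alpha$-sloped, and in the latter case invoke the Culler--Gordon--Luecke--Shalen dichotomy, ruling out the planar-fiber alternative because a fibered knot exterior in $S^3$ has a connected fiber with a single longitudinal boundary component---is exactly the paper's approach. In fact you are more careful than the paper on two points: you explicitly flag the $\dim H_1(M;\mathbb{Q})>1$ hypothesis (the paper quietly appeals to ``the proof of'' Theorem~2.0.2 rather than the theorem as stated), and you spell out why a planar fiber forces $K$ to be the unknot, where the paper simply asserts it.

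One caution about the alternative ``direct'' route you say you would actually prefer. The step ``$\hat S$ compresses $\Rightarrow$ reducing sphere $\Rightarrow$ $K(\alpha)$ is a connected sum of two lens spaces'' is not a free deduction: that reducible surgery on a non-trivial knot in $S^3$ yields specifically a connected sum of two lens spaces is the content of the cabling conjecture, which is open in general. The CGLS argument you invoke in your main line really does produce ``connected sum of two lens spaces'' as its reducible conclusion (via the annulus analysis in their proof), but a bare maximal-compression argument on $\hat S$ only gives you a reducing sphere, not the lens-space summands. So stick with your first argument; the ``cleaner'' alternative would need either the full CGLS machinery anyway or an unproven conjecture.
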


\begin{proof}

If $\A$-thin position for $K$ has more than one thick surface, there is at least one thin surface $F$.  By Theorem \ref{theorem:ThinInc} $F$ is essential. If $F$ is closed then $K$ has a closed essential surface and case \ref{item:closed} happens.  If $F$ has boundary then $K$ has an essential $\A$-sloped surface.  By the proof of  \cite{CulGorLueShaDSoK} Theorem 2.0.2 either one of options  \ref{item:Haken} or \ref{item:lensspace} happens or $K$ fibers over $S^1$.  In the latter case $K$ is the unknot, a contradiction.  
\end{proof}


      \section{Restrictions on Width}
    \label{sec:RestrictionsOnWidth}

This relationship between $\A$-sloped Heegaard splittings and Heegaard splitting of Dehn filled manifolds allows us to make restrictions on the possible widths of knots in $S^3$.  Let $\alpha \epsilon \{\mathbb{Q} \cup \infty\}$.  

 \begin{theorem} 
  \label{theorem:1s}
Let $K \subset S^3$.   If $w(K, \alpha ) = \{1\}$ then $K$ is the unknot.  If $w(K, \alpha )$ contains a $1$ then $K$ is the unknot.

 \begin{proof}
Say $w(K, \alpha ) = \{1\}$, and call the boundary Heegaard surface $S$.  $c(S)=1$ implies that $S$ is a annulus, and thus by Theorem \ref{theorem:planar}, $\alpha=\infty$, $K$ is in standard thin position for $K\subset S^3$, and so $K$ is the unknot.  

If $w(K, \alpha )$ contains a $1$ but is not equal to $\{1\}$ then a thin decomposition of $K$ has more than one thick level, and thus  must have thin levels.  By Lemma \ref{lemma:activecomp} one of these thin levels must have complexity less than 1, i.e. 0.  But since thin levels are essential, this implies that $K$ contains an essential sphere, which is not possible.  Thus if $w(K, \alpha )$ contains a $1$ it must be equal to $\{1\}$, and $K$ is the unknot.  
\end{proof}
\end{theorem}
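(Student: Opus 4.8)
The plan is to prove the two statements of Theorem~\ref{theorem:1s} together, since the second implies the first. First I would dispose of the case $w(K,\alpha)=\{1\}$. Here thin position consists of a single thick surface $S$, which is an $\alpha$-sloped Heegaard surface for $M = \overline{S^3-K}$, and $c(S)=1-\chi(S)+g(S)=1$. Since $S$ has nonempty boundary (it carries an $\alpha$-sloped bead), the only possibility is $g(S)=0$ and $\chi(S)=0$, so $S$ is an annulus, i.e.\ a planar Heegaard surface. Theorem~\ref{theorem:planar} then forces $\alpha=\infty$, and an annular meridional Heegaard surface for a knot exterior is exactly the standard $1$-bridge (thin) presentation of the unknot in $S^3$; hence $K$ is unknotted.

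Next I would handle the general claim that any $1$ appearing in $w(K,\alpha)$ forces $w(K,\alpha)=\{1\}$. Suppose $w(K,\alpha)$ contains a $1$ but the decomposition $thin(K,\alpha)=\{F_i,S_i\}$ has more than one thick surface. Then there is at least one thin surface $F_j$. Pick a thick surface $S_i$ with $c(S_i)=1$; by the same Euler characteristic count as above, $S_i$ is an annulus. Now I would invoke Lemma~\ref{lemma:boundary+lessboundary}: in each boundary compression body $C_i^1, C_i^2$ bounded by $S_i$ we have $c(\partial_- C) \le c(\partial_+ C) = c(S_i) = 1$, so the adjacent thin surfaces have complexity $\le 1$. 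Combined with Lemma~\ref{lemma:activecomp} (in a thin decomposition a component of $F_{i-1}$ either persists or gets handles from both sides, so it is genuinely simplified), I would argue that some thin surface adjacent to an annular thick surface must have complexity strictly less than $1$, hence complexity $0$: such a component is a sphere (using $c(S^2)=0$) or is empty on the relevant side.

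The key remaining point is to derive a contradiction from the existence of a thin surface of complexity $0$. By Theorem~\ref{theorem:ThinInc}, every component of every $F_j$ in a thin decomposition is incompressible and boundary incompressible in $M$. A complexity-$0$ component is a $2$-sphere, and an incompressible sphere in the irreducible manifold $M=\overline{S^3-K}$ cannot exist ($S^3$, and hence the knot exterior of any knot, is irreducible). This contradiction shows the decomposition cannot have more than one thick surface, so $w(K,\alpha)=\{1\}$, and by the first part $K$ is the unknot.

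I expect the main obstacle to be the middle step: pinning down precisely why an annular thick level forces a strictly-lower-complexity (hence sphere) thin level, rather than merely a complexity-$\le 1$ one. This requires care with the bookkeeping in Lemmas~\ref{lemma:activecomp} and~\ref{lemma:boundary+lessboundary} — ruling out the degenerate possibility that the thin surface adjacent to the annular $S_i$ is itself an annulus of complexity $1$ that simply persists unchanged, which would correspond to a trivial boundary compression body on both sides and contradict the decomposition having been thinned (or being nontrivially structured at that level). Once that is nailed down, the irreducibility argument closes the proof cleanly.
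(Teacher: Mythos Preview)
Your proposal is correct and follows essentially the same route as the paper: identify the complexity-$1$ thick surface as an annulus, invoke Theorem~\ref{theorem:planar} for the single-surface case, and in the multi-level case force an adjacent thin surface to have complexity $0$, contradicting essentiality in an irreducible knot exterior. The ``obstacle'' you flag is exactly the step the paper compresses into a single citation of Lemma~\ref{lemma:activecomp}; it is resolved by the remark following Lemma~\ref{lemma:boundary+lessboundary} that equality $c(\partial_-C)=c(\partial_+C)$ occurs only for trivial boundary compression bodies, so once Lemma~\ref{lemma:activecomp} guarantees handles on both sides the inequality is strict.
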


 \begin{theorem}
 \label{theorem:3s}
 Let $K \subset S^3$ be a non-trivial knot.   If $w(K, \alpha) = \{3\}$ then $\alpha=\infty$ and $K$ is a two bridge knot.   If $w(K, \alpha)$ contains a 3, but is not equal to $\{3\}$ or is less than $\{3\}$, then either the exterior of $K$ contains an essential torus or the exterior of  $K$ contains an essential annulus and $K(\alpha)$ is the connect sum of two lens spaces.  
 
 \begin{proof}  If $w(K, \alpha ) = \{3\}$, the boundary Heegaard surface $S$ is a 4-punctured sphere, and thus by Theorem \ref{theorem:planar} $\alpha=\infty$, and $K$ is in standard bridge position, and is a 2-bridge knot.  
 
 If $w(K, \alpha)$ contains a 3, but is not equal to $\{3\}$, then there is a thin surface, $F_i$ of complexity lower than 3, i.e. 0, 1, or 2.  If there is a thin surface of complexity 0, then $K$ contains an essential sphere, a contradiction.  If there is a thin surface of complexity 1, i.e. an annulus,  by the proof of Theorem 2.0.2 of \cite{CulGorLueShaDSoK}, either $K$ contains an essential torus disjoint from $F_i$, or $K(\alpha)$ is the connect sum of two lens spaces.    If there is a thin surface of complexity 2, it is an essential torus, or a pair of annuli .  In the latter case, by proposition 2.3.1 in \cite{CulGorLueShaDSoK} $K$ fibers over $S^1$, and thus is the unknot, a contradiction.

 If $w(K, \alpha) < \{3\}$ then, by Theorem \ref{theorem:1s} $w(K, \alpha)$ is a n-tuple of 2s.  If $n=1$ then $K$ has a genus 1 Heegaard splitting, a contradiction.  If $n>1$ then there is a thin surface $F_i$ of complexity lower than 2, i.e. 0 or 1.  By the arguments above, either $K$ contains an essential torus, or $K(\alpha)$ is the connect sum of two lens spaces. 
 \end{proof}

\end{theorem}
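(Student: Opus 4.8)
The plan is to mimic the proof of Theorem~\ref{theorem:1s}, but with closer bookkeeping of which low-complexity surfaces can occur as thick or thin levels. First I would list the connected surfaces of complexity at most $3$: for a connected surface of genus $g$ with $b$ boundary components $c = 3g+b-1$ when $g>0$ and $c = b-1$ when $g = 0$, $b>0$, together with $c(S^2) = 0$. Thus complexity $0$ forces a disk or a sphere, complexity $1$ forces an annulus, complexity $2$ forces a pair of pants or a closed torus, and complexity $3$ forces a four-punctured sphere or a once-punctured torus. Two observations trim this list for surfaces in an $\alpha$-sloped decomposition of $M = \overline{S^3-K}$: since $M$ has the single torus boundary component $T$, the boundary curves of such a surface cut $T$ into annuli that alternate between the two sides of the surface, so each component meets $T$ in an \emph{even} number of curves --- ruling out the once-punctured torus and the pair of pants; and $M$ is irreducible and $\partial$-irreducible, so it has no essential sphere, no essential disk, and (a solid torus being the only manifold with torus boundary that admits a genus-one Heegaard splitting) no genus-one Heegaard splitting. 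Recall also that each thick surface $S_i$ is connected, while thin surfaces may be disconnected.

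For the first assertion, assume $w(K,\alpha) = \{3\}$. Then $thin(K,\alpha)$ is a single $\alpha$-sloped Heegaard surface $S$ with $c(S) = 3$; it is connected with nonempty boundary on $T$, so by the list it is a four-punctured sphere, i.e.\ a planar Heegaard surface. Theorem~\ref{theorem:planar} then forces $\alpha = \infty$, so $S$ is a meridional genus-zero Heegaard surface with four punctures; capping off recovers a Heegaard sphere for $S^3$ meeting $K$ in four points, exhibiting $K$ in $2$-bridge position. Hence $K$ is a $2$-bridge knot.

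For the remaining assertions I would first treat $w(K,\alpha) < \{3\}$. By Theorem~\ref{theorem:1s} no entry of $w(K,\alpha)$ is $1$, and a $0$ entry would make some piece have a disk or sphere Heegaard surface, impossible in an irreducible, $\partial$-irreducible piece; so every entry is $2$ and $w(K,\alpha) = \{2,\dots,2\}$. A single entry would force the connected thick surface to be a torus, giving $M$ a genus-one Heegaard splitting and hence making $K$ trivial, a contradiction --- so there are at least two thick levels. Now in both remaining cases (``$w(K,\alpha)$ contains a $3$ but $\neq\{3\}$'' and ``$w(K,\alpha) = \{2,\dots,2\}$ with at least two thick levels'') there is more than one thick level, hence at least one thin level; using Lemma~\ref{lemma:activecomp} and the fact that a thin decomposition has no trivial product pieces, I would pick the thin surface $F$ adjacent to a thick surface of the largest relevant complexity ($3$, resp.\ $2$), so that $c(F) \le 2$, resp.\ $c(F) \le 1$. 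By Theorem~\ref{theorem:ThinInc}, $F$ is essential. I would then run through the short list. If $c(F) = 0$ then $M$ has an essential sphere, impossible. If $c(F) = 1$ then $F$ is an essential $\alpha$-sloped annulus, and the proof of Theorem~2.0.2 of \cite{CulGorLueShaDSoK} gives that either $M$ contains an essential torus (disjoint from $F$) or $K(\alpha)$ is a connected sum of two lens spaces. If $c(F) = 2$ then, after discarding disk and sphere components and the parity-forbidden pair of pants, $F$ is either a closed essential torus --- and we are done --- or a pair of disjoint essential $\alpha$-sloped annuli, in which case the relevant proposition of \cite{CulGorLueShaDSoK} forces $K$ to fiber over $S^1$, so $K$ is trivial, a contradiction. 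Every surviving branch yields the stated dichotomy.

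I expect the main obstacle to be twofold and essentially bookkeeping: (i) pinning down exactly which surfaces of complexity at most $3$ can appear (the parity-of-boundary count and the exclusion of essential disks and spheres and of genus-one Heegaard splittings of $M$), and (ii) applying Culler--Gordon--Luecke--Shalen correctly --- their Theorem~2.0.2 is stated under the hypothesis $\dim H_1(M,\mathbb{Q}) > 1$, which fails for a knot exterior, so the required implications must be quarried out of the \emph{proof} of that theorem (just as in the proof of Theorem~\ref{theorem:MainTheorem}) rather than cited from its statement.
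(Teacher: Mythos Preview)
Your proposal is correct and follows essentially the same route as the paper's proof: identify the thick surface in the $\{3\}$ case as a four-punctured sphere and invoke Theorem~\ref{theorem:planar}; in the remaining cases locate a thin surface of complexity at most $2$ (resp.\ $1$), use Theorem~\ref{theorem:ThinInc}, and feed the resulting essential annulus/annuli or torus into the proof of Theorem~2.0.2 and Proposition~2.3.1 of \cite{CulGorLueShaDSoK}. Your version is in fact a bit more careful than the paper's --- you explicitly justify via the parity-of-boundary count why the once-punctured torus and pair of pants are excluded, and you flag the $H_1$ hypothesis issue in CGLS --- but the architecture is identical. One small remark: the inequality $c(F)<c(S_i)$ for an adjacent thin surface comes most directly from Lemma~\ref{lemma:boundary+lessboundary} (nontrivial boundary compression bodies strictly decrease complexity) rather than Lemma~\ref{lemma:activecomp}.
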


    \section{Complexity Bounds}
        \label{sec:ComplexityBounds}
        
        In section \ref{sec:RestrictionsOnWidth} we saw that there are some universal restrictions on the widths of knots in $S^3$.  Here we explore the range of possible widths for a fixed manifold, over all $\A \epsilon \{\mathbb{Q} \cup \infty \cup \emptyset\}$
    
    To begin we establish some notation.  Let $\{c_i\}_{1\leq i\leq k}$ and $\{n_i\}_{1\leq i\leq k}$ be ordered multi sets.  Define $\{c_i\} + \{n_i\}$ to be the multi-set $\{c_i+n_i\}$.  Define $\{c_i\} +_i m$ to be the multi-set $\{c_1, c_2, c_3,..., c_i+m, .....c_k\}$.  

Define $ n \times \{c_i\} = \{nc_i\}_{1\leq i\leq k}$ 

Define $\lceil \{c_i\}\rceil=\{\lceil c_i\rceil \}$, where $\lceil c_i \rceil$ is the least integer greater than $c_i$.

As usual, let $M$ be a manifold with a designated a torus boundary component $T$.

\begin{theorem} 
\label{theorem:Hgenus}
If the Heegaard genus of $M$ is $g$, then $w(M, \alpha) \leq \{3g+1\}$ for any $\alpha \epsilon \{\mathbb{Q} \cup \infty \cup \emptyset\}$.  

\begin{proof}

A decomposition of this width exists.  Let $S$ be a genus $g$ Heegaard surface for $K$.  $\alpha$-stabilize $S$ once along $T$ to obtain $S'$, a twice punctured genus $g$ $\A$-sloped Heegaard surface for $M$.  $c(S')=3g+1$.  
\end{proof}

\end{theorem}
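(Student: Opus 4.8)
The plan is to exhibit an explicit $\alpha$-sloped generalized Heegaard splitting of $M$ whose width is $\{3g+1\}$; since $w(M,\alpha)$ is defined as the minimum width over all $\alpha$-sloped decompositions, producing one witness suffices for the inequality $w(M,\alpha)\leq\{3g+1\}$. The case $\alpha=\emptyset$ is handled separately at the end, since there $3g+1$ should be replaced by a closed-surface count; but for $\alpha\in\{\mathbb{Q}\cup\infty\}$ the construction is as follows.

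First I would take a minimal-genus Heegaard splitting $M=C_1\cup_S C_2$, so $g(S)=g$. I would arrange that the distinguished torus boundary component $T$ lies in $\partial_- C_2$ (every torus boundary component of $M$ sits on the $\partial_-$ side of one of the two compression bodies, and we may relabel so that it is $C_2$). Now apply a single $\alpha$-stabilization to this splitting along $T$, as in Definition \ref{definition:alphastab}: this produces an $\alpha$-sloped Heegaard splitting $M=C_1'\cup_{S'}C_2'$ where $S'$ is obtained from $S$ by the $\alpha$-stabilization move, adding an $\alpha$-sloped $0$-bead and a $1$-handle on the $C_1$ side and a $2$-handle and a $2$-bead on the $C_2$ side. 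As recorded in the excerpt just after Definition \ref{definition:alphastab}, the $\alpha$-stabilization of a genus $g$ Heegaard surface is a genus $g$ surface with two boundary components, i.e. $S'$ is a twice-punctured genus $g$ surface.

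Next I would simply compute $c(S')$ from the definition of complexity: for a connected surface of genus $g$ with $b$ boundary components, $\chi(S')=2-2g-b$, so $1-\chi(S')+g(S')=1-(2-2g-b)+g=3g+b-1$; with $b=2$ this gives $c(S')=3g+1$. Since an $\alpha$-sloped Heegaard splitting is an $\alpha$-sloped generalized Heegaard splitting with a single thick surface $S_1=S'$ and no thin surfaces, its width is the singleton multi-set $\{c(S')\}=\{3g+1\}$. Hence $w(M,\alpha)\leq\{3g+1\}$ in the lexicographic order on multi-sets.

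I do not expect a genuine obstacle here — this is essentially a bookkeeping lemma — but the one point that needs care is the Euler-characteristic count and the implicit connectedness of $S'$: one should check that $\alpha$-stabilization does not disconnect the Heegaard surface (it does not, since the added $1$-handle and $2$-handle are attached along arcs meeting $S$, keeping it connected) and that the $\alpha$-sloped bead hypothesis of the definition of an $\alpha$-sloped decomposition is met (it is, by construction there is an $\alpha$-sloped $0$-bead). Finally, for the case $\alpha=\emptyset$, the minimal closed-surface decomposition already has width at most $\{c(S)\}=\{3g\}\leq\{3g+1\}$, so the bound holds trivially there as well; if the intended statement bundles $\emptyset$ in under the same expression $\{3g+1\}$ this is the remark to make.
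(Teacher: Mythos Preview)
Your proposal is correct and follows essentially the same approach as the paper: take a genus $g$ Heegaard surface, $\alpha$-stabilize once, and compute $c(S')=3g+1$. Your write-up is in fact more careful than the paper's, spelling out the Euler-characteristic count, the connectedness of $S'$, and the $\alpha=\emptyset$ case.
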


More generally, the width of any slope is bounded by a function of the closed width, $w(M, \emptyset)$.  In fact, the closed width is always close to as complicated as any $\alpha$-sloped width can be.  Moreover, the higher the closed width, the wider the range of possible $\alpha$-sloped widths over all $\alpha$.    We now prove:

\begin{restatebounds} 

 For any $\alpha \epsilon \{\mathbb{Q} \cup \infty\}$,   $\lceil \frac{2}{3} \times w(M, \emptyset)\rceil \leq  w(M, \alpha) \leq w(M, \emptyset) +_1 2 $.

\begin{proof}

Let the collection of surfaces $\{F_i, S_i\}$ for $i=1,....,n$ be a thin $\alpha$-sloped decomposition of $M$ along $T$ of width $w(M, \alpha)$.

Starting with $S_n$, tube along $\partial M$ to obtain a closed surface $\check{S_n}$.   Next tube $F_{n-1}$ along $T$.  Continue in this manner until there is a closed decomposition of $M$.  If $c(S_i) = 1-\chi(S_i)+g(S_i)=3g_i + p_i-1$ where $g_i$ is the genus of $S_i$, and $p_i$ is the number of boundary components, then $c(\check{S_i})=1-\chi(\check{S_i})+g(\check{S_i})= 3(g_i+ \frac{p_i}{2})-1= 3g_i+\frac{3}{2}p_i-1 <  \frac{3}{2} c(S_i)$.   Thus $w(K, \emptyset) \leq \frac{3}{2} \times w(M, \alpha)$, see Figure \ref{figure:tubing}

Let the collection of surfaces $\{F_i, S_i\}$ for $i=1,....,n$ be a thin closed decomposition of $M$, so $w(M, \{F_i, S_i\})=w(M, \emptyset)$.  After possibly reversing the order of indices, $T \subset C_1 \subset W_1$.  After $\alpha$-stabilizing $S_1$ once we obtain an $\A$-sloped decomposition $\{F_i', S_i'\}$ where $F_i=F_i'$, $i=1,....,n$ and $S_i=S_i'$ for $i=2,....,n$ and $c(S_1')=c(S_i)+2$.  Thus $w(M, \A) \leq w(M, \{F_i', S_i'\})= w(M, \{F_i, S_i\})+_1 2 $
\end{proof}
\end{restatebounds}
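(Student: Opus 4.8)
The plan is to prove the two inequalities separately: the upper bound $w(M,\alpha)\leq w(M,\emptyset)+_1 2$ by $\alpha$-stabilizing a thin closed decomposition, and the lower bound $\lceil\frac{2}{3}\times w(M,\emptyset)\rceil\leq w(M,\alpha)$ by tubing a thin $\alpha$-sloped decomposition along $T$ to close it up and comparing complexities.

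For the upper bound I would start with a thin closed decomposition $\{F_i,S_i\}$ of $M$ realizing $w(M,\emptyset)$. Since $T$ is connected and every thin surface $F_i$ is closed (hence disjoint from $\partial M$), $T$ lies in exactly one piece $W_j$, and in one of the two compression bodies $C$ of the splitting $W_j=C_j^1\cup_{S_j}C_j^2$ we have $T\subset\partial_-C$. I would then apply a single $\alpha$-stabilization to $S_j$ as in Definition \ref{definition:alphastab}. This does not change $W_j$ as a submanifold of $M$, so it leaves every $F_i$ and every $S_i$ with $i\neq j$ untouched; it introduces an $\alpha$-sloped $0$-bead and $2$-bead, so the output is genuinely an $\alpha$-sloped decomposition; and it raises $c(S_j)$ by exactly $2$, since the genus is unchanged and two new boundary curves appear. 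The resulting decomposition has width equal to the sorted multiset obtained from $w(M,\emptyset)$ by adding $2$ to its $j$th entry, and a short check shows that adding $2$ to any entry of an ordered multiset and re-sorting gives something lexicographically no larger than adding $2$ to the largest entry; hence this width is $\leq w(M,\emptyset)+_1 2$, and minimality of $w(M,\alpha)$ concludes.

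For the lower bound I would take a thin $\alpha$-sloped decomposition $\{F_i,S_i\}$ realizing $w(M,\alpha)$ and convert it to a closed decomposition by tubing along $T$. The boundary curves of each thick and thin surface lie on $T$ with slope $\alpha$, hence are parallel and cut $T$ into annuli; pushing alternate annuli slightly into $M$ yields disjoint tubes that pair off the boundary curves, replacing a surface $\Sigma$ of genus $g$ with $p$ boundary components by a closed surface $\check{\Sigma}$ of genus $g+\frac{p}{2}$. Carrying this out first on $S_n$, then on $F_{n-1}$, then on $S_{n-1}$, and so on, converts the handle-and-bead decomposition into an honest closed generalized Heegaard splitting of $M$, so $w(M,\emptyset)\leq\{c(\check{S_i})\}$. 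Writing $c(S_i)=3g_i+p_i-1$, one computes $c(\check{S_i})=3(g_i+\tfrac{p_i}{2})-1=3g_i+\tfrac{3}{2}p_i-1<\tfrac{3}{2}c(S_i)$, so $w(M,\emptyset)\leq\frac{3}{2}\times w(M,\alpha)$ entrywise; applying the monotone operation $\lceil\frac{2}{3}(\cdot)\rceil$ to both sides gives $\lceil\frac{2}{3}\times w(M,\emptyset)\rceil\leq w(M,\alpha)$.

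The main obstacle is the lower-bound direction: one must check that the tubing genuinely produces a valid closed generalized Heegaard splitting. This means verifying that the tubes can be realized disjointly and glued to the correct sides of each surface, and that performing the tubings in the order $S_n,F_{n-1},S_{n-1},\dots$ is self-consistent, so that the pieces between consecutive tubed thin surfaces are again compression bodies with the tubed thick surface as their Heegaard surface. One must also track the complexity comparison $c(\check{S_i})$ versus $c(S_i)$ with care — in particular for thick surfaces of small genus, where the ceiling in the statement is doing the work — and confirm that the boundary curves on $T$ can always be paired off so that the surfaces actually close up. The upper bound is essentially immediate once $\alpha$-stabilization and its effect on complexity are in hand.
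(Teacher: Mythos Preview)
Your proposal is correct and follows essentially the same approach as the paper: tube a thin $\alpha$-sloped decomposition along $T$ to obtain the lower bound via the same $c(\check{S_i})<\tfrac{3}{2}c(S_i)$ computation, and $\alpha$-stabilize a thin closed decomposition once to obtain the upper bound. Your treatment of the upper bound is in fact slightly more careful than the paper's, since you explicitly note that adding $2$ to the entry $c(S_j)$ (for whichever $W_j$ contains $T$) and re-sorting is lexicographically no larger than $w(M,\emptyset)+_1 2$, a point the paper leaves implicit.
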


  \begin{figure}[htbp]
\begin{center}
\includegraphics[width=3in]{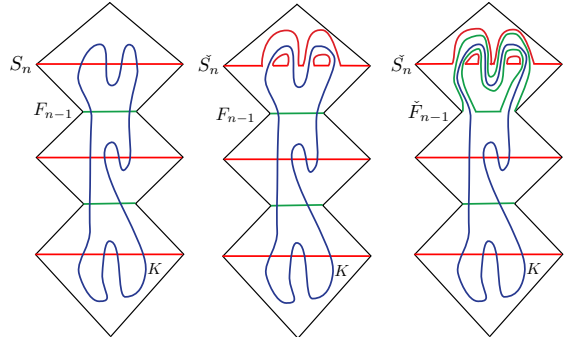}
\caption{An $\A$-sloped surface is tubed along a component of the boundary of $M$}
\label{figure:tubing}
\end{center}
\end{figure}

\begin{remark}
We will see in section  \ref{sec:TorusKnots} that this bound is sharp.  Let $K \subset S^3$ be a torus knot.  In section \ref{sec:TorusKnots} we show that $w(K, \emptyset)=\{5\}$,  $w(K, \alpha)=\{4\}$ for some $\alpha$, and $w(K, \beta)=\{7\}$ for some $\beta$.    Thus $w(K, \beta)=\{7\}=w(K, \emptyset)+ 2$  and $\lceil \frac{2}{3} \times w(K, \emptyset)\rceil = \{\lceil \frac{10}{3}\rceil \}=w(K, \alpha)=\{4\}$.  

\end{remark}

    \section{Torus Knots}
    \label{sec:TorusKnots}
    
    To illustrate the fact that $\alpha$-sloped multiple Heegaard splittings differ depending on $\alpha$ we consider the example of torus knots.  For rational $\alpha$, $\alpha$-sloped decompositions of torus knots exteriors can be grouped into four categories.  In each case the induced Heegaard splitting of the Dehn filled manifold is thin.    
    
Dehn surgeries which yield reducible manifolds and Dehn surgeries which yield lens spaces are the sources of two important conjectures in low dimensional topology: the cabling conjecture and the Berge conjecture.  Not only are Dehn surgeries on torus knots classified, and are well understood\cite{MosESAaTK}, but there exist surgeries yielding lens spaces and others yielding reducible manifolds. Examining thin $\alpha$-sloped decompositions of torus knots across rational $\alpha$ gives a good picture of how these decompositions can be used to see the manifold which results from Dehn surgery.

%
%
%

%

\begin{example}

 Let $K \subset S^3$ be a non-trivial $(p,q)$ torus knot, and let $\frac {r}{s} \epsilon \{\mathbb{Q}\}$ be a slope on the torus boundary $T$ of $M$, the exterior of $K$.

\begin{enumerate} 

\item If $|pqr+s|=1$, then $w(K, \frac {r}{s})=\{4\}$.  In this case $\frac {r}{s}$ is a genus 2 primitive/primitive slope , and $K(\frac {r}{s})$ is a lens space \cite{MosESAaTK}.  

\item If $|pqr+s|=0$, then $w(K, \frac {r}{s})=\{4, 4\}$.  In this case $\frac {r}{s}$ is a cabling annulus slope and, and $K(\frac {r}{s})$ is the connect sum of two lens spaces \cite{MosESAaTK}.  

\item If $|pqr+s|\neq1$ or 0,  then $w(K, \frac {r}{s}) = \{7\}$.  In this case $K(\frac {r}{s})$ is a Seifert fibered space over a sphere with 3 exceptional fibers \cite{MosESAaTK}.

\item $w(K, \emptyset)=\{5\}$.

\end{enumerate}

\end{example}

\begin{proof}
\begin{enumerate}

\item If $|pqr+s|=1$, then $w(K, \frac {r}{s})=\{4\}$.\\
Claim 1: $w(K, \frac {r}{s}) \leq \{4\}$.

A decomposition of this width exists.  First observe that $K$ can sit in  genus two Heegaard surface $S$ for $S^3=H_1 \cup_S H_2$ in a primitive/primitive position with the surface slope  $\frac {r}{s}$.   Begin with a 0-handle and a 1-handle, forming  a core of $H_1$.  Add a $\frac {r}{s}$-sloped 0-bead along $T$, and attach it to the core with a 1-handle to form the boundary compression body $C_1$.   $M - C_1$ is homeomorphic to $C_1$ and thus can be decomposed in the same manner.   Thus $\partial_+ C_1$ is a $\frac {r}{s}$-sloped Heegaard surface which is a twice punctured torus, and has complexity  $\{4\}$.       \\

Claim 2: $w(K, \frac {r}{s}) \geq \{4\}$.  If $w(K, \frac {r}{s}) < \{4\}$, then $w(K, \frac {r}{s})$ must contain 3 or be less than $\{3\}$. If $w(K, \frac {r}{s})= \{3\}$, by Theorem \ref{theorem:3s} $\frac {r}{s}=\infty$, and the so $p,q,r=1$, and $K$ is the unknot, a contradiction.  If $w(K, \frac {r}{s})$ contains a 3, but is not equal to $\{3\}$ or is less then $\{3\}$, then either $K$ has an essential torus or $K(\alpha)$ is the connect sum of two lens spaces.  By \cite{MosESAaTK} $K(\alpha)$ is a lens space.  By \cite{ThuTDGaT1} torus knots do not have essential tori, and so $w(K, \frac {r}{s}) \geq \{4\}$.

Note that this construction works for any Berge knot \cite{BerSKWSYLS}.

\item If $|pqr+s|=0$, then $w(K, \frac {r}{s})=\{4, 4\}$. \\
Claim 1: $w(K, \frac {r}{s}) \leq \{4, 4\}$.

	A decomposition of this width exists.  First observe that $K$ can sit in a  Heegaard torus $S$ for $S^3=H_1 \cup_S H_2$,  as a standardly positioned $(p,q)$ torus knot with surface slope $\frac {r}{s}$.  $M$ is the  cabling annulus of $K$.  Let $A_{\frac{r}{s}}$ be a spanning annulus running between $\partial n(K)$ and a core of $H_1$ with slope $\frac{r}{s}$ on $\partial n(K)$. 

	Begin the decomposition with a 0-handle and 1-handle forming the core of $H_1$. Attach an $\frac {r}{s}$-sloped 0-bead $b$ on $T$ to this core via a 1-handle $\mathcal{H}$ to form the boundary compression body $C_1^1$ so that $b \cap A_{\frac{r}{s}}$ is a co-core of $b$ and $A_{\frac{r}{s}} \cap \mathcal{H}$ is a disk.    Attach a 2-handle to $C_1$ along $\partial_+ C_1^1 \cap A_{\frac{r}{s}}$, forming the boundary compression body $C_1^2$.  Note that $\partial_- C_1^2$ is the cabling annulus and $W_1=C_1^1 \cup C_1^2=H_1-n(K)$.   Also note that $W_1$ is homeomorphic to $M - W_1=W_2$.  Thus $W_2=C_2^1 \cup C_2^2=H_2-n(K)$ where $C_1^i$ is homeomorphic to   $C_2^i$, $i=1,2$.  This decomposition has two thick surfaces, $\partial_+ C_1^1$ and $\partial_+ C_2^1$, each of which are twice punctured tori, and so the width of this decomposition is $\{4,4\}$.  \\

Claim 2: $w(K, \frac {r}{s}) \geq \{4, 4\}$.   

If $w(K, \frac {r}{s}) < \{4, 4\}$ then either $w(K, \frac {r}{s}) = \{ 4\}$ or $w(K, \frac {r}{s})$ contains a 0, 1, 2, or 3.  If $w(K, \frac {r}{s}) = \{ 4\}$ then $K$ has a twice punctured $\frac {r}{s}$-sloped Heegaard torus $S$.  By Theorem \ref{theorem:cappingoff}  $S$ is capped off in $K( \frac {r}{s})$ to become a Heegaard torus $\hat{S}$, a contradiction.


If $w(K, \frac {r}{s})$ contains a 2 then there is some sub-manifold $W_i=C_i^1 \cup_T C_i^2$ with a Heegaard torus $T$.  By Lemma \ref{lemma:boundary+lessboundary}, both $\partial C_i^1$ and $\partial C_i^2$ must be of lower complexity than $T$, and have no more boundary components than $T$.  Thus they are both essential spheres in $K$, a contradiction.

 $w(K, \frac {r}{s})$ containing a 1 is ruled out by Theorem \ref  {theorem:1s}.  If $w(K, \frac {r}{s})$ contains a 0, then by Lemma \ref{lemma:boundary+lessboundary} K has a genus 0 Heegaard surface, a contradiction.  $w(K, \frac {r}{s}) = \{ 3\}$ is ruled out by Theorem  \ref{theorem:3s}.


$M$ has a unique essential annulus up to isotopy, so in a thin decomposition of $M$, any essential annulus is parallel to the cabling annulus $A$.   $A$ is capped off to $\hat{A}$, a sphere which realizes $K(\frac {r}{s})$ as the connect sum of two lens spaces.  If $A$ appears as the $i$th thin surface then the surfaces $\{\hat{F_j},\hat{S_j}\} i+1\leq j \leq n$ are capped off to be a generalized Heegaard splitting for one of the lens spaces $K(\frac {r}{s})| \hat{A}$.  

If $\{ 4\}<  w(K, \frac {r}{s}) <  \{4,4\}$ it must contain exactly one 4 and at least one 3, and no other numbers.  Only one of the thick surfaces in this decomposition (the one with complexity 4) has non zero genus.    By  Lemma \ref{lemma:thinclosed}, since all the thick surfaces have boundary,  all of the thin surfaces in the thin decomposition must have boundary.  Because of the requirements on the complexity of thin surfaces in a thin decomposition, each thin surface is either a single annulus, or a pair of annuli.

If a thin decomposition contains thin surface which is a single annulus $A=F_m$, $A$ is parallel to the cabling annulus, and the collections of surfaces $\{F_i, S_i\}1\leq i \leq m$ and $\{F_j, S_i\}m+1\leq j \leq n$ are each capped off in $K(\frac {r}{s})$ to become generalized Heegaard splittings for lens spaces.  In one of these lens spaces, all of the thick surfaces are spheres, a contradiction. Thus no thin surfaces are single annuli.   

If there is a thin surface which is a pair of annuli $A \cup A'$, each of $A$ and $A'$ must be a copy of the cabling annulus.  Thus $A \cup A'$ is not separating in $K$, and so cannot be a surface in a generalized boundary Heegaard splitting.

 If $\{ 3\}<  w(K, \frac {r}{s}) <  \{ 4\}$ then $w(K, \frac {r}{s}) $ is an $n$-tuple of 3s.  By the argument above, this is impossible.  Thus $w(K, \frac {r}{s}) \geq \{4, 4\}$.

\item If $|pqr+s|\neq1$ or 0,  then $w(K, \frac {r}{s}) = \{7\}$.\\

Since torus knots are tunnel number one, they have genus 2 Heegaard splittings, and thus by Theorem \ref{theorem:Hgenus} $w(K, \frac {r}{s}) \leq \{7\}$ for any $\frac {r}{s}$.    Assume $w(K, \frac {r}{s}) < \{7\}$.  This means that $K$ contains a thick surface $S_j$ of complexity less than 7.   If $S_j$ is a boundary Heegaard surface, then it a $\frac {r}{s}$ sloped punctured surface of genus at most 1.  $S_j$ is capped off to become a Heegaard surface for $w(K, \frac {r}{s})$, of genus at most 1.   But $K( \frac {r}{s})$ is a Seifert fibered space over a sphere with 3 exceptional fibers, which has Heegaard genus two, and width $\{4\}$, a contradiction.  If the thin $\frac {r}{s}$ sloped decomposition has thin surfaces, then  in the induced closed splitting of $K( \frac {r}{s})$ there are more than one thick surface, each of which has complexity at most 4, contradicting the width of $K( \frac {r}{s})$.  Thus  $w(K, \frac {r}{s}) = \{7\}$.  

\item $w(K, \emptyset)=\{5\}$. \\

As above, torus knots are tunnel number one.  Hence they have  minimal genus  two Heegaard splittings, so $w(K, \frac {r}{s}) = \{5\}$.  

\end{enumerate}
\end{proof}

\begin{lemma} Let $K \subset S^3$ be a non-trivial $(p,q)$ torus knot and $\frac {r}{s} \epsilon \mathbb{Q}$.   If $\{F_i,S_i\}$ is a $\frac {r}{s}$-thin decomposition of $K$, then the induced decomposition, $\{\hat{F_i},\hat{S_i}\}$, is a thin decomposition of $K(\frac {r}{s})$.  
\begin{proof}
\begin{enumerate}
\item $|pqr+s|=1$. $K(\frac {r}{s})$ is the lens space $L_{|q|,ps^2}$.  The boundary-Heegaard surface $S$ is a twice punctured torus, which is capped off in $K(\frac {r}{s})$ to be a Heegaard torus.  Since $L_{|q|,ps^2}$ has Heegaard genus 1, the splitting is thin.  

\item  $|pqr+s|=0$.   $K(\frac {r}{s})$ is the connect sum of the lens spaces: $L_{r,s}\# L_{s,r}$.  The thin surface $F_1$ is the cabling annulus for $K$, and is capped off to a sphere which bounds a punctured lens space on each side.  The thick surfaces $S_1$ and $S_2$ are twice punctured tori, which are each capped off to be Heegaard tori for the respective lens space.  The width of this decomposition is $\{2,2\}$, which is $w(L_{r,s}\# L_{s,r}, \emptyset)$, and thus this is a thin decomposition.  

\item   $|pqr+s|\neq1$ or 0.  $K(\frac {r}{s})$ is a Seifert fibered space over a sphere with 3 exceptional fibers.  The boundary Heegaard surface $S$ described above is a twice punctured genus two surface, and is capped off to be a genus two Heegaard surface.   $w(L_{r,s}\# L_{s,r})$ has Heegaard genus two, and since weakly reducible Heegaard splittings of genus two are reducible, $w(L_{r,s}\# L_{s,r}, \emptyset)=\{4\}$, $\hat{S}$ is a thin decomposition.  
\end{enumerate}
\end{proof}
  
\end{lemma}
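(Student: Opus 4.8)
The plan is to combine the classification of $\frac{r}{s}$-thin decompositions of torus knot exteriors from the previous example with Theorem~\ref{theorem:cappingoff} and Moser's description of torus knot surgeries \cite{MosESAaTK}, after which the statement reduces to a short comparison of multisets. I would first record the effect of capping off on complexity: since $M$ has a single torus boundary component, every boundary curve of every $S_i$ is filled, so $\hat{S_i}$ is a closed surface of the same genus $g_i$ as $S_i$; if $S_i$ has $p_i$ boundary components then $c(S_i)=3g_i+p_i-1$ while $c(\hat{S_i})=3g_i-1$, so capping off decreases the complexity of every thick level. By Theorem~\ref{theorem:cappingoff}, $\{\hat{F_i},\hat{S_i}\}$ is a generalized Heegaard splitting of $K(\frac{r}{s})$, so it suffices to compute $w(K(\frac{r}{s}),\emptyset)$ in each case and check that the induced splitting realizes it. Throughout I would use the elementary facts that a closed connected surface has complexity in $\{0,2,5,8,\dots\}$, that the thin surfaces in a thin decomposition are essential, and that a thick surface in a thin decomposition of a connected manifold other than $S^3$ has no sphere component.

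When $|pqr+s|=1$: the example realizes $w(K,\frac{r}{s})=\{4\}$ by a single twice-punctured Heegaard torus $S$ (a complexity-$4$ single-level decomposition is either this or a five-punctured sphere, and the latter would make $K(\frac{r}{s})=S^3$), so $\hat{S}$ is a Heegaard torus and the induced width is $\{2\}$; since $K(\frac{r}{s})$ is a lens space \cite{MosESAaTK} it is irreducible and not $S^3$, so a single-level closed decomposition has width at least $\{2\}$ and a multilevel one has width at least $\{2,2\}>\{2\}$, giving $w(K(\frac{r}{s}),\emptyset)=\{2\}$ and hence thinness of the induced splitting. When $|pqr+s|=0$: the example realizes $w(K,\frac{r}{s})=\{4,4\}$ by a decomposition with the cabling annulus as its only thin surface and two twice-punctured Heegaard tori as thick surfaces, so $\hat{F_1}$ is the reducing sphere of $K(\frac{r}{s})=L_1\#L_2$ and the $\hat{S_i}$ are Heegaard tori, giving induced width $\{2,2\}$; since Heegaard genus is additive under connected sum this manifold has Heegaard genus $2$, so a single-level closed decomposition has width $\{5\}$ and a multilevel one has width at least $\{2,2\}$, giving $w(K(\frac{r}{s}),\emptyset)=\{2,2\}$, again matched by the induced splitting.

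When $|pqr+s|\neq 0,1$: the example realizes $w(K,\frac{r}{s})=\{7\}$ by a single twice-punctured genus-two Heegaard surface, so $\hat{S}$ is a closed genus-two Heegaard surface of complexity $5$; here $K(\frac{r}{s})$ is a small Seifert fibered space over $S^2$ with three exceptional fibers \cite{MosESAaTK}, hence irreducible, not a lens space, and atoroidal \cite{MosESAaTK}, so it has Heegaard genus exactly $2$ (at most $2$ by tunnel number one, at least $2$ since it is not a lens space or $S^2\times S^1$). A single-level closed decomposition then has width $\{5\}$, while a multilevel thin decomposition has an essential closed thin surface, which cannot be a sphere (irreducibility) nor a torus (atoroidality), hence has genus at least $2$; by Lemma~\ref{lemma:boundary+lessboundary} a thick surface of a piece abutting it then has complexity at least $5$, so the width exceeds $\{5\}$. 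Thus $w(K(\frac{r}{s}),\emptyset)=\{5\}$, realized by the induced splitting. The step I expect to be the real work is this determination of $w(K(\frac{r}{s}),\emptyset)$, especially the lower bounds, which lean on irreducibility and atoroidality of the surgered manifold and on the non-existence of closed surfaces of complexity $1$; with those in hand the rest is bookkeeping with the lexicographic order on multisets.
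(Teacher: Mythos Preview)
Your proposal is correct and follows essentially the same case-by-case strategy as the paper: use the classification of $\frac{r}{s}$-thin decompositions of torus knot exteriors, cap off via Theorem~\ref{theorem:cappingoff}, and compare with the width of $K(\frac{r}{s})$ using Moser's surgery classification. The only substantive difference is in the justification of the lower bounds for $w(K(\frac{r}{s}),\emptyset)$: where the paper simply asserts these widths (and in case~3 appeals to the fact that a weakly reducible genus-two Heegaard splitting is reducible), you instead argue via additivity of Heegaard genus under connected sum in case~2 and via irreducibility and atoroidality of the small Seifert fibered space in case~3 to rule out multilevel thin decompositions. Both routes reach the same conclusion; yours is slightly more self-contained.
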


%
%
%
%
%
%
%

    \bibliographystyle{hplain}
    \bibliography{../References/MooreReferences}

\end{document}